\renewcommand{\@seccntformat}[1]{\bf\csname the#1\endcsname.}
\renewcommand{\section}{\@startsection{section}{1}
	\z@{.7\linespacing\@plus\linespacing}{.5\linespacing}
	{\normalfont\upshape\bfseries\centering}}
\renewcommand{\@biblabel}[1]{\@ifnotempty{#1}{#1.}}
\theoremstyle{plain}
\newtheorem{thm}{Theorem}[section]
\newtheorem{lem}[thm]{Lemma}
\newtheorem{prop}[thm]{Proposition}
\newtheorem{cor}[thm]{Corollary}
\theoremstyle{definition}
\newtheorem{ex}[thm]{Example}
\newtheorem{defn}[thm]{Definition}
\newtheorem{rem}{Remark}[section]
\def\>{\succ}
\def\<{\prec}
\def\O{\Omega}
\begin{document}

\title[Sania Asif \textsuperscript{1}, Zhixiang Wu\textsuperscript{2}
 ]{Quasi-Twilled Lie Pseudolgebras and Their Deformation Maps}
\author{Sania Asif\textsuperscript{1}, Zhixiang Wu\textsuperscript{2}
}
\address{\textsuperscript{1} Institute of Mathematics, Henan Academy of Sciences, Zhengzhou, 450046, P.R. China.}
\address{\textsuperscript{2}School of Mathematical Sciences, Zhejiang University, Hangzhou, Zhejiang Province, 310058, P.R. China} 
\email{\textsuperscript{1}11835037@zju.edu.cn}
\email{\textsuperscript{2}wzx@zju.edu.cn}
	\keywords{Quasi twilled Lie pseudoalgebra; controlling algebra; $\mathfrak{L}_\infty$-pseudoalgebra; cohomology; deformation}
	\subjclass[2020]{Primary 17B65, 17B10, 17B38, 17B69, Secondary 14F35, 18N10, 18N40.}
	\date{\today}
	\thanks{This work is funded by the Second batch of the Provincial project of the Henan Academy of Sciences (No. 241819105). This paper is also sponsored
by NNSFC (No.12471038, No.12171129) and ZJNNF(No.Z25A010006).}
	\begin{abstract}
In this paper, we present a unified framework for studying cohomology theories of various operators in the context of pseudoalgebras. The central tool in our approach is the notion of a quasi-twilled Lie pseudoalgebra. We introduce two types of deformation maps. Type I unifies modified $r$ matrices, crossed homomorphisms, derivations, and homomorphisms; and Type II provides a uniform treatment of relative Rota-Baxter operators, twisted Rota-Baxter operators, Reynolds operators, and deformation maps of matched pairs of Lie conformal algebras.
We construct the corresponding controlling algebras and define cohomology theories for both types of deformation maps. These results recover existing cohomological results for known operators and yield new results, including the cohomology theory for modified $r$-matrices and deformation maps of matched pairs of Lie pseudoalgebras.
\end{abstract}

 \footnote{*The corresponding authors' emails: wzx@zju.edu.cn; 11835037@zju.edu.cn.}
	\maketitle \section{ Introduction}
	
 \par The study of algebraic structures through their cohomology and deformation has long been a central theme in modern pure mathematics and mathematical physics. The origins of deformation theory can be traced back to the foundational work of Gerstenhaber \cite{G1,G2} on associative algebras and Nijenhuis-Richardson \cite{N1, N2} on Lie algebras. Their insights revealed that the deformation problem for many algebraic structures could be encoded within a differential graded (dg) Lie algebra or its homotopy analog. This principle was often attributed to Deligne, Drinfeld, and Kontsevich. This philosophy was later made precise by Lurie \cite{Lurie} and Pridham \cite{Pridham}, who showed that under appropriate conditions, every reasonable deformation theory corresponds to an $\mathcal{L}_\infty$-algebra whose Maurer-Cartan elements encode the structures being deformed. This theory is extended to operator-type structures (i.e., linear maps compatible with algebraic operations), including morphisms \cite{ALWH, Figueroa, Fregier}, derivations \cite{TFS}, $\mathcal{O}$ operator (relative Rota-Baxter operators) \cite{AWW, AWMB, Das, Das2, TBGS}, crossed homomorphisms \cite{HongSu, Lue}, twisted Rota-Baxter operators \cite{AYW, YL}, Reynolds-type operators \cite{Das3}, averaging operators \cite{AW}, and Nijenhuis operators \cite{A, Asif, AWY}. A common strategy in these studies involves the construction of a controlling algebra. That is typically a $\mathcal{L}_\infty$-algebra, whose Maurer-Cartan elements correspond precisely to the given operators. By twisting this algebra with a known solution, one can derive the cochain complex governing infinitesimal deformations and obstructions.
\par In particular, the concept of a twilled Lie algebra, introduced originally in the context of Lie bialgebras and Poisson geometry, has provided a powerful framework for understanding various operator-type structures such as modified $r$ matrices, relative Rota-Baxter operators, crossed homomorphisms, and deformation maps of matched pairs of Lie algebras \cite{AM, BD, JST}. This framework has since been extended to other algebraic settings, including pre-Lie algebras and quasi-pre-Lie bialgebras \cite{Liu2020}, associative algebras via quasi-twilled associative algebras and their governing $\mathcal{L}_\infty$-algebras \cite{DM}, and higher algebraic structures such as $3$-Lie algebras \cite{HSZ}. These structures are governed by a rich interplay between homotopy algebras \cite{LS}, operads \cite{Vallette}, and cohomology theories, often encoded in differential graded Lie algebras or curved $\mathcal{L}_\infty$-algebras whose Maurer-Cartan elements represent the structures under investigation.
\par Despite substantial progress in the classical algebra setting, the development of analogous results in infinite-dimensional contexts, particularly for Lie pseudoalgebras, has remained relatively limited. In this paper, we extend the notion of quasi-twilled Lie algebras to the Lie pseudoalgebras, which provides an algebraic formalism for studying vertex algebras, Lie conformal algebra, and chiral symmetry algebras arising in two-dimensional conformal field theory. We introduce the concept of a quasi-twilled Lie pseudoalgebra, which includes direct sums, semidirect products, action algebras, and matched pairs of Lie pseudoalgebras as special cases. Within this framework, we define two types of deformation maps:
\begin{enumerate}
\item[i.] \textbf{Type I} deformation maps unify modified $r$-matrices, crossed homomorphisms, derivations and homomorphisms between Lie pseudoalgebras.
\item[ii.] \textbf{Type II} deformation maps unify relative Rota-Baxter operators, twisted Rota-Baxter operators, Reynolds-type operators and deformation maps of pairs of matched Lie pseudoalgebras.
\end{enumerate}For each type, we construct a corresponding curved $\mathcal{L}_\infty$- pseudoalgebra, whose Maurer-Cartan elements precisely encode the respective deformation maps. This construction provides the first known controlling algebra for modified $r$ matrices on Lie pseudoalgebras. The modified $r$ matrices are solutions to the modified classical Yang-Baxter equation. 
Furthermore, we develop the associated Chevalley-Eilenberg cohomology theories for these deformation maps, thereby establishing a rigorous foundation for their infinitesimal deformations and obstruction theory. Our approach recovers known results in the context of pseudoalgebras, and offers a unified perspective on the deformation and cohomology of a wide class of operator algebras.
 \par This paper is structured as follows. In Section $2$, we define the notion of a quasi-twilled Lie pseudoalgebra, which generalizes the concept of matched pairs and provides a unified framework for various algebraic structures such as direct sums, semidirect products, and action algebras in the context of Lie pseudoalgebras. Several examples are provided to illustrate the generality and applicability of this concept. Moreover, we classify quasi-twilled Lie $H$-pseudoalgebras that are free $H$-modules of rank $2$. In Section $3$, we introduce type I deformation maps and construct their corresponding curved algebra $\mathcal{L}_\infty$-algebra, whose Maurer-Cartan elements encode these maps. We then derive the associated cohomology theory, which governs infinitesimal deformations of type I maps. Section $4$ extends this analysis to type II deformation maps, showing that they unify relative Rota-Baxter operators, twisted Rota-Baxter operators, Reynolds-type operators, and deformation maps of matched pairs of Lie pseudoalgebras. For type II map, we construct its controlling curved $\mathcal{L}_\infty$-pseudoalgebra and develop the corresponding cohomology theory. 
 \par Throughout the paper, all vector spaces, tensor products, and modules are on a field {\bf k} of zero characteristic and $H$-modules denote left $H$-modules, where $H$ is a cocommutative Hopf algebra on the field $\bf k$. The tensor product over $ \mathbf{k}$ will be denoted by $ \otimes $.  Moreover, we distinguish between two types of direct sums. The symbol $ \boxplus $ denotes the direct sum of $ H $-modules. For two $ H $-modules $ \mathfrak{g} $ and $ \mathfrak{h} $, $ \mathfrak{g} \boxplus \mathfrak{h} $ is their direct sum as left $ H $-modules. The symbol $ \oplus $ denotes the direct sum of Lie $ H $-pseudoalgebras. For two Lie $ H $-pseudoalgebras $ (\mathfrak{g}, [\cdot * \cdot]_\mathfrak{g}) $ and $ (\mathfrak{h}, [\cdot * \cdot]_\mathfrak{h}) $, their direct sum $ \mathfrak{g} \oplus \mathfrak{h} $ is the Lie $ H $-pseudoalgebra on $ \mathfrak{g} \boxplus \mathfrak{h} $.
\section{ Quasi-twilled Lie $H$-Pseuodalgebras}
In this section, we introduce the notion of a quasi-twilled Lie $H$-pseudoalgebra and illustrate it with several examples. Before proceeding to the core content, we recall foundational ideas that underpin the construction and properties of such structures.
\subsection{ Preliminaries on Pseudoalgebras}
For any natural number $ n \in \mathbb{N} $, the group of permutations of a finite set with $ n $ elements is denoted by $ S_n $, $S_{p, n-p} $ denotes the set of $(p,n-p)$-shuffles in the permutation group $ S_n $.

\noindent We adopt the following convention throughout the paper, i.e., 
if $ f : \bigotimes E_i \to F $ is a map, then we sometimes write
$$
f(x_1, x_2, \dots, x_n) := f(x_1 \otimes x_2 \otimes \cdots \otimes x_n),
$$
where $ x_1, \dots, x_n $ are elements of the corresponding spaces.

\noindent Let $ \alpha \in S_n $ be an element of the symmetric group on $ n $ letters. Then the action of $ \alpha $ on a tensor $ x_1 \otimes \cdots \otimes x_n $ is defined by
$$
\alpha(x_1 \otimes \cdots \otimes x_n) := x_{\alpha(1)} \otimes \cdots \otimes x_{\alpha(n)}.
$$
This action will be used to define equivariance conditions on multilinear operations and cochains.
\begin{defn}
A \emph{$\mathbf{k}$-linear operad} $\mathcal{P}$ is a collection $\{\mathcal{P}(n) \mid n \geq 1\}$ of vector spaces over $\mathbf{k}$, equipped with the following structure:\\
For each $n \geq 1$, there is an action of the symmetric group $S_n$ on $\mathcal{P}(n)$. There are linear maps, called compositions,
\[
\gamma_{m_1, \dots, m_t} : \mathcal{P}(t) \otimes \mathcal{P}(m_1) \otimes \cdots \otimes \mathcal{P}(m_t) \to \mathcal{P}(m_1 + \cdots + m_t),
\]
defined for all $m_1, \dots, m_t \geq 1$. Additionally, there exists a unit element $1 \in \mathcal{P}(1)$ such that
\[
\mu(1, \dots, 1) = \mu
\]
for any $t \geq 1$ and any $\mu \in \mathcal{P}(t)$. Moreover, these structure maps must satisfy the conditions of associativity
 and equivariance. We denote the operads associated to associative algebras and Lie algebras by $\mathcal{A}ss$ and $\mathcal{L}ie$, respectively.
\end{defn}
\noindent Assume that $ H $ is a cocommutative Hopf algebra over $ \mathbf{k} $, with a coassociative coproduct $ \Delta $, a counit $ \varepsilon $, and an antipode $ S $.
We use the notation (cf. \cite{Sweedler}),
\begin{align*}
 \Delta^p(h) &= h_{(1)} \otimes \cdots \otimes h_{(p+1)},\\
 (S^{\otimes p} \otimes 1^{\otimes q})\Delta^{p+q-1}(h) &= h_{(-1)} \otimes \cdots \otimes h_{(-p)} \otimes h_{(p+1)} \otimes \cdots \otimes h_{(p+q)},
\end{align*} for any $ h \in H $, where $ \Delta^p $ denotes the $ p $-iterated coproduct.\\
Let $\mathfrak{g}$ be a left $ H $-module in the pseudotensor category $\mathcal{M}^*(H) $. A morphism of the form $$\mathrm{Hom}_{H^{\otimes(p+1)}}\left(\mathfrak{g}^{\otimes(p+1)}, H^{\otimes(p+1)} \otimes_H \mathfrak{g}\right)$$ is referred to as an $ H $-\emph{pseudoproduct} of $\mathfrak{g}$ of degree $ p $. We denote the space of $ H $-pseudoproduct of $\mathfrak{g}$ of degree $ p\geq 0 $ by $ M^p(\mathfrak{g})$. Additionally, we set $ M^{p}(\mathfrak{g}) = \mathfrak{g}$, for $p=-1$ and $ M^p(\mathfrak{g}) = 0 $ for $ p < -1 $.\\
For any permutation $ \sigma \in S_{p+1} $ and $ f \in M^p(\mathfrak{g}) $, the action of the symmetric group on $f$ is defined by
\begin{align}\label{eq1}\sigma(f)(a_1, \dots, a_{p+1}) := (\sigma \otimes_H 1)\, f(a_{\sigma^{-1}(1)}, \dots, a_{\sigma^{-1}(p+1)}), \quad\text{ for all } a_1, \dots, a_{p+1} \in \mathfrak{g}.
\end{align} This equips $ M^p(\mathfrak{g}) $ with the structure of a $ \mathbf{k}[S_{p+1}] $-module.
\begin{defn}
We refer to $\mathrm{End}(\mathfrak{g})$ as the \emph{pseudo-operad} associated with $\mathfrak{g}$.
\end{defn} 
\begin{defn}
 Let $ f $ be a $ \mathbf{k} $-linear map from $ \mathfrak{g}^{\otimes n} $ to $ H^{\otimes(n-1)} \otimes \mathfrak{g}$. We say that $ f $ is \emph{conformal} if it satisfies the following two conditions:
\begin{align*}
 f(h_1 a_1 \otimes \cdots \otimes h_{n-1} a_{n-1} \otimes a_n) &= (h_1 \otimes \cdots \otimes h_{n-1} \otimes 1)\,f(a_1 \otimes \cdots \otimes a_n),\\
f(a_1 \otimes \cdots \otimes a_{n-1} \otimes h a_n) &= (1^{\otimes(n-1)} \otimes h_{(n)})\,f(a_1 \otimes \cdots \otimes a_n)\,(h_{(-1)} \otimes \cdots \otimes h_{(-(n-1))} \otimes 1),
\end{align*}
for all $ h_i, h \in H $, $ a_i \in \mathfrak{g}$.
\end{defn}
 \noindent Define
$$
\mathrm{End}^c_n(\mathfrak{g}) = \left\{ f \in \mathrm{Hom}_\mathbf{k}(\mathfrak{g}^{\otimes n}, H^{\otimes(n-1)} \otimes \mathfrak{g}) \,\Big|\, f \text{ is conformal} \right\}.$$
Similar to Eq. \eqref{eq1}, the action of symmetric group $ S_n $ on $ \mathrm{End}^c_n(\mathfrak{g}) $ is given by
$$
\sigma(f)(a_1 \otimes \cdots \otimes a_n) = (-1)^\sigma (\sigma \otimes 1)\,f(a_{\sigma^{-1}(1)} \otimes \cdots \otimes a_{\sigma^{-1}(n-1)} \otimes a_n),
$$
for $ f \in \mathrm{End}^c_n(\mathfrak{g}) $, $ a_i \in A $, and $ \sigma \in S_{n-1} $. For transpositions $ (n-1,n) $, the action is defined by
$$
(n-1,n)(f)(a_1 \otimes \cdots \otimes a_n) = -g^i_1 h_i^{(-1)} \otimes \cdots \otimes g^i_{n-2} h_i^{(-(n-2))} \otimes h_i^{(n-1)} e_i,
$$
where
$ f(a_1 \otimes \cdots \otimes a_n) = g^i_1 \otimes \cdots \otimes g^i_{n-2} \otimes h_i \otimes e_i.
$ We denote the collection of all such spaces by $\mathrm{End}^c(\mathfrak{g})= \bigoplus_{n \in \mathbb{N}^*}\mathrm{End}^c_n(\mathfrak{g}) $ where $\mathbb{N}^* = \mathbb{N} \setminus \{0\}$.
 \begin{defn}
 For any $ \sigma \in S_p $, let $ (-1)^\sigma $ to denote its signature. A map $f\in M^{p-1}(\mathfrak{g})$ is said to be skew-symmetric if it satisfies \begin{align}\label{eq11}
f(x_1,x_2,\cdots,x_p)
= (-1)^\sigma (\sigma \otimes_H \mathrm{id}) f(x_{\sigma(1)}, x_{\sigma(2)}, \cdots, x_{\sigma(p)}), \quad\text{ for all } x_1, x_2, \cdots, x_p\in \mathfrak{g}. 
\end{align}
 \end{defn}\noindent Let $C^p(\mathfrak{g},\mathfrak{g}) $ denote the subspace of $M^{\otimes p}(\mathfrak{g})$ consisting of all skew-symmetric maps. We define the graded vector space by $ C^*(\mathfrak{g},\mathfrak{g}) = \bigoplus_{p \geq 0} C^p (\mathfrak{g},\mathfrak{g}). $ From \cite{AWY}, we recall the circle product $ f \circledcirc g: C^{p} (\mathfrak{g}, \mathfrak{g}) \otimes C^{q} (\mathfrak{g}, \mathfrak{g}) \to C^{p+q-1} (\mathfrak{g}, \mathfrak{g}) $ by 
\begin{equation}\label{circleproduct}
(f \circledcirc g)(x_1,\cdots,x_{p+q-1}) = \sum_{\sigma \in S_{q,p-1}} (-1)^{\sigma} (\sigma \otimes_H \mathrm{id}) f\big( g(x_{\sigma(1)}, \cdots, x_{\sigma(q)}), x_{\sigma(q+1)}, \cdots, x_{\sigma(p+q-1)} \big),
\end{equation}
 for any $ f \in C^{p} (\mathfrak{g}, \mathfrak{g}) $ and $ g \in C^{q} (\mathfrak{g}, \mathfrak{g}) $ , and $ x_1,\cdots,x_{p+q-1}\in \mathfrak{g} $. Using Eq. \eqref{circleproduct}, we define the Nijenhuis--Richardson bracket (also called the $ NR $-bracket) on the graded space $ C^* (\mathfrak{g}, \mathfrak{g}) $ by
$$
[f,g]_{NR} := f \circledcirc g - (-1)^{(p-1)(q-1)} g \circledcirc f.
$$
The $ NR $-bracket is an element of $ C^{p+q-1}(\mathfrak{g},\mathfrak{g}) $. The graded space $ C^{*}(\mathfrak{g}, \mathfrak{g}) $, equipped with the Nijenhuis--Richardson bracket $ [\cdot,\cdot]_{NR} $, forms a graded Lie algebra, expressed as $ \big(C^{*}(\mathfrak{g}, \mathfrak{g}), [\cdot, \cdot]_{NR}\big) .$ \\
\noindent As is well known, any graded Lie algebra (also called \emph{controlling Lie algebra}) admits the structure of differential graded Lie algebra (dgLa), when a differential operator is introduced. The differential operator on the graded Lie algebra $ (C^{*}(\mathfrak{g}, \mathfrak{g}),[\cdot,\cdot]_{NR}) $ can be defined using a Maurer-Cartan element. A \emph{Maurer--Cartan element}, denoted by $ \mathfrak{M} $, is an element of degree $1$, i.e., $ \mathfrak{M} \in C^2(\mathfrak{g},\mathfrak{g}) $, that satisfies the Maurer-Cartan condition 
$$
[\mathfrak{M}, \mathfrak{M}]_{NR} = 0.
$$
Furthermore, the differential of the dgLa, expressed in terms of the Maurer-Cartan element, is given by
$$
d_\mathfrak{M} := [\mathfrak{M}, -]_{NR}.
$$
\par Let $ \mathfrak{g} $ and $ \mathfrak{h} $ be two left $ H $-modules, with elements denoted by $ x_i $'s, $ u_i$'s  respectively. For any skew-symmetric $ H^{\otimes (k+l)} $-linear map $\kappa: \mathfrak{g}^{\otimes k} \otimes \mathfrak{h}^{\otimes l} \to H^{\otimes(k+l-1)} \otimes_H \mathfrak{g},$ we define its \textbf{lift} $ \hat{\kappa} \in C^{k+l}(\mathfrak{g} \boxplus \mathfrak{h}, \mathfrak{g} \boxplus \mathfrak{h}) $ as
\begin{align*}
\hat{\kappa}\big((x_1, u_1), \dots, (x_{k+l}, u_{k+l})\big) = \left( \sum_{\sigma \in S(k,l)} (-1)^\sigma (\sigma \otimes_H \mathrm{id})\,\kappa(x_{\sigma(1)}, \dots, x_{\sigma(k)}, u_{\sigma(k+1)}, \dots, u_{\sigma(k+l)}),\quad 0 \right).
\end{align*}
Similarly, for any skew-symmetric $ H^{\otimes (k+l)} $-linear map
$\kappa : \mathfrak{g}^{\otimes k} \otimes \mathfrak{h}^{\otimes l} \to H^{\otimes(k+l-1)} \otimes_H \mathfrak{h},$ its lift $ \hat{\kappa} \in C^{k+l}(\mathfrak{g} \boxplus \mathfrak{h}, \mathfrak{g} \boxplus \mathfrak{h}) $ is defined by
\begin{align*}
\hat{\kappa}\big((x_1, u_1), \dots, (x_{k+l}, u_{k+l})\big) = \left( 0,\quad \sum_{\sigma \in S(k,l)} (-1)^\sigma (\sigma \otimes_H \mathrm{id})\,\kappa(x_{\sigma(1)}, \dots, x_{\sigma(k)}, u_{\sigma(k+1)}, \dots, u_{\sigma(k+l)}) \right).
\end{align*}
The maps $ \hat{\kappa} $ are referred to as the \emph{lift} of $ \kappa $. For example, consider the maps $ \alpha: \mathfrak{g} \otimes \mathfrak{h} \to H^{\otimes 2} \otimes_H \mathfrak{g} $ and $ \beta: \mathfrak{g} \otimes \mathfrak{h} \to H^{\otimes 2} \otimes_H \mathfrak{h} $. The corresponding lift maps $\hat{\alpha}$ and $\hat{\beta}$ are given by
\begin{align}
\label{mapa}
\hat{\alpha}\big((x_1, u_1), (x_{2}, u_{2})\big) &= \Big( \alpha(x_{1},u_{2}) - ((12) \otimes_H \mathrm{id})\,\alpha(x_{2}, u_{1}),\quad 0 \Big), \\
\label{mapb}
\hat{\beta}\big((x_1, u_1), (x_{2}, u_{2})\big) &= \Big( 0,\quad \beta(x_{1}, u_{2}) - ((12) \otimes_H \mathrm{id})\,\beta (x_2,u_{1}) \Big).
\end{align}
\begin{defn}\label{def:bidegree}
Let $ \mathfrak{g} $ and $ \mathfrak{h} $ be two left $ H $-modules. A cochain $ f \in C^{p}(\mathfrak{g} \boxplus \mathfrak{h}, \mathfrak{g} \boxplus \mathfrak{h}) $, with $ p = k+l+1 $, is said to have \emph{bidegree} $ k|l $ if 
\begin{enumerate}
 \item[(i)] for any $ Y \in \mathfrak{g}^{\otimes (k+1)} \otimes \mathfrak{h}^{\otimes l} $,
 $$
 f(Y) \in H^{\otimes (k+l+1)} \otimes_H \mathfrak{g};
 $$
 \item[(ii)] for any $ Y \in \mathfrak{g}^{\otimes k} \otimes \mathfrak{h}^{\otimes (l+1)} $,
 $$
 f(Y) \in H^{\otimes (k+l+1)} \otimes_H \mathfrak{h};
 $$
 \item[(iii)] for all $ Y \in (\mathfrak{g} \boxplus \mathfrak{h})^{\otimes p} $ not of the form described in (i) or (ii),
 $$
 f(Y) = 0.
 $$
\end{enumerate}
A cochain $ f \in C^p(\mathfrak{g} \boxplus \mathfrak{h}, \mathfrak{g} \boxplus \mathfrak{h}) $ is called \emph{homogeneous} if it has a bidegree, denoted by $ \|f\| = k|l $.
\end{defn}

\noindent The lifted maps $ \hat{\alpha} $ and $ \hat{\beta} $, defined in Eqs.~\eqref{mapa} and \eqref{mapb}, belong to $ C^2(\mathfrak{g} \boxplus \mathfrak{h},\ \mathfrak{g} \boxplus \mathfrak{h}) $ and have bidegree $ \| \hat{\alpha} \| = \| \hat{\beta} \| = 1|0 $. This naturally gives rise to a homogeneous linear map of bidegree $ 1|0 $, defined as 
\begin{align*}
\hat{\mu}:= \hat{\alpha} + \hat{\beta}.
\end{align*}One verifies that $ \hat{\mu} $ defines a Lie pseudoalgebra bracket corresponding to a semi-direct product of $\mathfrak{g} $ and $ \mathfrak{h}$
\begin{align*}
\hat{\mu} \big((x_1, u_1), (x_2, u_2) \big)
= \Big( \alpha(x_{1},u_{2})-((12)\otimes_H\mathrm{id})\alpha(x_{2}, u_{1}),\quad \beta(x_{1}, u_{2})-((12)\otimes_H\mathrm{id})\beta (x_2,u_{1}) \Big),
\end{align*}
for all $ x_1, x_2 \in \mathfrak{g} $, $ u_1, u_2 \in \mathfrak{h} $. Although $ \hat{\mu} $ is not technically a lift of a single map $ \mu $ (as there is no underlying $ \mu $ of the correct type), we retain this notation for convenience.\\We define $ \mathfrak{g}^{k,l} = \mathfrak{g}^{\otimes k} \otimes \mathfrak{h}^{\otimes l} $. Then $$ (\mathfrak{g} \boxplus \mathfrak{h})^{\otimes n} \cong \bigoplus_{k+l=n} \mathfrak{g}^{k,l} $$ For the space of cochains $ C^p(\mathfrak{g} \boxplus \mathfrak{h}, \mathfrak{g} \boxplus \mathfrak{h})$, we have the isomorphism 
$$
C^p( \mathfrak{g} \boxplus \mathfrak{h}, \mathfrak{g} \boxplus \mathfrak{h}) \cong \left(\bigoplus_{k+l=p} C(\mathfrak{g}^{k,l}, \mathfrak{g})\right) \boxplus \left(\bigoplus_{k+l=p} C (\mathfrak{g}^{k,l}, \mathfrak{h})\right),
$$
where $C(\mathfrak{g}^{k,l}, \mathfrak{g})$ and $C(\mathfrak{g}^{k,l}, \mathfrak{h})$ denote the spaces of cochains mapping $\mathfrak{g}^{k,l}$
 to $\mathfrak{g}$ and $\mathfrak{h}$, respectively. The isomorphism is induced by the lift map, which assigns to each cochain its corresponding component in the direct sum decomposition.
 \\
 The following lemma shows that the $NR$-bracket on $ C^*(\mathfrak{g} \boxplus \mathfrak{h}, \mathfrak{g} \boxplus \mathfrak{h}) $ is compatible with the bigrading.
\begin{lem}
If $ \|f\| = k_f | l_f $ and $ \|g\| = k_g | l_g $, then the $NR$-bracket $[f,g]_{NR}$ has the bidegree $ k_f + k_g | l_f + l_g $.
\end{lem}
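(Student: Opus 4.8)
The plan is to reduce the statement to the corresponding fact for the circle product $\circledcirc$ and then to prove that by counting $\mathfrak{g}$- and $\mathfrak{h}$-arguments. Since
$[f,g]_{NR}=f\circledcirc g-(-1)^{(p-1)(q-1)}g\circledcirc f$ with $p=k_f+l_f+1$ and $q=k_g+l_g+1$, and since (by Definition~\ref{def:bidegree} and the lift decomposition of $C^{*}(\mathfrak{g}\boxplus\mathfrak{h},\mathfrak{g}\boxplus\mathfrak{h})$ recalled above) the cochains of a fixed bidegree form a $\mathbf{k}$-linear subspace, it suffices to show that $f\circledcirc g$ is homogeneous of bidegree $k_f+k_g\mid l_f+l_g$. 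Applying this with the roles of $f$ and $g$ interchanged then treats $g\circledcirc f$, whose bidegree $k_g+k_f\mid l_g+l_f$ coincides with the former, and the claim follows.

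First I would record the arithmetic. By Eq.~\eqref{circleproduct} we have $f\circledcirc g\in C^{p+q-1}(\mathfrak{g}\boxplus\mathfrak{h},\mathfrak{g}\boxplus\mathfrak{h})$ with $p+q-1=(k_f+k_g)+(l_f+l_g)+1$, so this is the right total degree $k\mid l$ for $k:=k_f+k_g$ and $l:=l_f+l_g$. We may assume $f,g\neq 0$. By $\mathbf{k}$-multilinearity and the splitting $(\mathfrak{g}\boxplus\mathfrak{h})^{\otimes n}\cong\bigoplus_{a+b=n}\mathfrak{g}^{a,b}$, it is enough to evaluate $f\circledcirc g$ on a tensor $Y$ each of whose slots is a pure $\mathfrak{g}$-element or a pure $\mathfrak{h}$-element; let $(a,b)$ be the resulting numbers of $\mathfrak{g}$- and $\mathfrak{h}$-slots, so $a+b=k+l+1$.

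The heart of the argument is a type count on a single summand $(-1)^{\sigma}(\sigma\otimes_H\mathrm{id})\,f\bigl(g(x_{\sigma(1)},\dots,x_{\sigma(q)}),x_{\sigma(q+1)},\dots,x_{\sigma(p+q-1)}\bigr)$ of $(f\circledcirc g)(Y)$. If $g$ receives $c$ arguments from $\mathfrak{g}$ and $d$ from $\mathfrak{h}$ (so $c+d=q$), homogeneity of $g$ forces a nonzero summand to have $(c,d)\in\{(k_g+1,l_g),(k_g,l_g+1)\}$, with $g(\cdots)$ landing in $\mathfrak{g}$ in the first case and in $\mathfrak{h}$ in the second. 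In either case $f$ is then applied to its output-slot (of that type) together with the remaining $a-c$ original $\mathfrak{g}$-slots and $b-d$ original $\mathfrak{h}$-slots, and a direct count shows that in \emph{both} cases $f$ receives exactly $a-k_g$ arguments of $\mathfrak{g}$-type and $b-l_g$ of $\mathfrak{h}$-type (the extra output slot from $g$ exactly offsets the shift in the number of leftover original slots). Homogeneity of $f$ now forces the summand to vanish unless $(a-k_g,b-l_g)\in\{(k_f+1,l_f),(k_f,l_f+1)\}$, i.e. unless $(a,b)\in\{(k+1,l),(k,l+1)\}$, and then $f(\cdots)$ lies in $H^{\otimes(k+l+1)}\otimes_H\mathfrak{g}$ in the first case and in $H^{\otimes(k+l+1)}\otimes_H\mathfrak{h}$ in the second, a property preserved by the prefactor $(\sigma\otimes_H\mathrm{id})$. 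Summing over $\sigma\in S_{q,p-1}$ yields condition (i) of Definition~\ref{def:bidegree} when $(a,b)=(k+1,l)$, condition (ii) when $(a,b)=(k,l+1)$, and condition (iii) for every other $(a,b)$. Hence $\|f\circledcirc g\|=k\mid l$, which completes the plan.

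I expect the only delicate point to be the bookkeeping claim in the last paragraph, namely that the number of $\mathfrak{g}$- versus $\mathfrak{h}$-type inputs seen by $f$ equals $(a-k_g\mid b-l_g)$ regardless of which of the two admissible types $g$ outputs. Once that is verified, everything else is automatic, and passing from $\circledcirc$ to $[\cdot,\cdot]_{NR}$ introduces no new difficulty since the bidegree-$k\mid l$ cochains are closed under $\mathbf{k}$-linear combinations.
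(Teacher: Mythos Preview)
Your proof is correct. The paper states this lemma without proof, treating it as a routine verification inherited from the classical Lie-algebra setting; your reduction to the circle product followed by a $\mathfrak{g}$/$\mathfrak{h}$ type-count is exactly the standard way to fill in those details, and the bookkeeping claim you single out (that $f$ always receives $a-k_g$ arguments of $\mathfrak{g}$-type and $b-l_g$ of $\mathfrak{h}$-type, independent of which admissible output type $g$ produces) checks out.
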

\noindent It is straightforward to check that:
\begin{lem}
If $ \|f\| = -1|l$ and $ \|g\| = -1|k $, then $ [f,g]_{NR} = 0 $. Similarly, if $ \|f\| =l|-1 $ and $ \|g\| = k|-1 $, then $ [f,g]_{NR} = 0 $.
\end{lem}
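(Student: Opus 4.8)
The plan is to read off, directly from Definition~\ref{def:bidegree}, exactly where a homogeneous cochain of bidegree $-1|l$ (resp.\ $l|-1$) is supported, and then to observe that the circle product in Eq.~\eqref{circleproduct} always feeds the single output of $g$ into the \emph{first} slot of $f$ (and vice versa), which lies outside that support.

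First I would unwind the bidegree hypotheses. If $\|f\|=-1|l$ then $f\in C^{l}(\mathfrak{g}\boxplus\mathfrak{h},\mathfrak{g}\boxplus\mathfrak{h})$, and in Definition~\ref{def:bidegree}: clause (i), specialised to $k=-1$, says that $f$ restricted to $\mathfrak{h}^{\otimes l}$ takes values in $H^{\otimes l}\otimes_H\mathfrak{g}$; clause (ii) is vacuous, there being no valid tensor argument of the form $\mathfrak{g}^{\otimes(-1)}\otimes\mathfrak{h}^{\otimes(l+1)}$; and clause (iii) forces $f$ to vanish on every element of $(\mathfrak{g}\boxplus\mathfrak{h})^{\otimes l}$ having at least one $\mathfrak{g}$-component. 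Thus $f$ is supported precisely on $\mathfrak{h}^{\otimes l}$ and outputs into $\mathfrak{g}$, and likewise $g$ with $l$ replaced by $k$. Symmetrically, a cochain of bidegree $l|-1$ is supported precisely on $\mathfrak{g}^{\otimes l}$ and outputs into $\mathfrak{h}$.

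Next I would substitute this into Eq.~\eqref{circleproduct}. Every summand of $(f\circledcirc g)(x_1,\dots,x_{p+q-1})$ has the form $(-1)^\sigma(\sigma\otimes_H\mathrm{id})\,f\big(g(x_{\sigma(1)},\dots,x_{\sigma(q)}),x_{\sigma(q+1)},\dots,x_{\sigma(p+q-1)}\big)$, so the first argument supplied to $f$ is the single output of $g$. In the first case that output lies in $\mathfrak{g}$, so the tuple handed to $f$ has a $\mathfrak{g}$-entry in its first slot and is annihilated by $f$; hence $f\circledcirc g=0$. Since the hypotheses on $f$ and $g$ have the same shape, the identical argument with the roles of $f$ and $g$ interchanged gives $g\circledcirc f=0$, and therefore $[f,g]_{NR}=f\circledcirc g-(-1)^{(p-1)(q-1)}g\circledcirc f=0$. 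The second case is verbatim the same with $\mathfrak{g}$ and $\mathfrak{h}$ swapped: the output of $g$ (resp.\ $f$) now lies in $\mathfrak{h}$, and a cochain of bidegree $l|-1$ kills any tensor containing an $\mathfrak{h}$-entry, so both circle products vanish again.

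There is no real obstacle here; the only point that calls for a moment's attention is the degenerate tensor power $\mathfrak{g}^{\otimes(-1)}$ (resp.\ $\mathfrak{h}^{\otimes(-1)}$) that appears when one instantiates Definition~\ref{def:bidegree} with a bidegree entry equal to $-1$, i.e.\ confirming that clause (ii) (resp.\ clause (i)) contributes nothing, so that the support of each cochain is a single pure tensor power. Alternatively one could invoke the preceding lemma, which would assign $[f,g]_{NR}$ the ``bidegree'' $-2\,|\,(l+k)$ (resp.\ $(l+k)\,|-2$); as no nonzero cochain admits a bidegree entry below $-1$, this also forces $[f,g]_{NR}=0$. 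I would nonetheless present the direct computation, since it exhibits concretely why such a bracket degenerates.
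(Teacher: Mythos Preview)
Your argument is correct. The paper offers no proof beyond ``It is straightforward to check that:'', so there is nothing to compare against at the level of detail; given that the lemma is stated immediately after the bidegree-additivity lemma, the intended one-line justification is almost certainly the alternative you mention at the end (the bracket would have bidegree $-2\,|\,(l+k)$, which forces it to vanish), but your direct unwinding of Definition~\ref{def:bidegree} together with the observation that the circle product feeds $g$'s output (lying in $\mathfrak{g}$) into a slot of $f$ (which kills any $\mathfrak{g}$-entry) is equally valid and more explicit.
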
\noindent This result reflects the fact that cochains with negative degrees in either component cannot interact nontrivially under the $NR$-bracket.

 We now introduce the main concepts that will be studied in this paper. 
 \begin{defn}Let $(\mathfrak{G},[\cdot *\cdot ]_{\mathfrak{G}})$ be a Lie pseudoalgebra that decomposes as a direct sum of two left $H$-modules, i.e., $\mathfrak{G}=\mathfrak{g}\boxplus \mathfrak{h}$. The triple $(\mathfrak{G}, \mathfrak{g}, \mathfrak{h})$ is called a twilled Lie pseudoalgebra if both $\mathfrak{g}$ and $\mathfrak{h}$ are subpseudoalgebras of $(\mathfrak{G}, [\cdot*\cdot]_\mathfrak{G})$.\end{defn}
\begin{defn}Let $(\mathfrak{G}, [\cdot* \cdot]_{\mathfrak{G}})$ be a Lie pseudoalgebra that decomposes as a direct sum of two left $H$-modules, i.e., $\mathfrak{G} = \mathfrak{g} \boxplus \mathfrak{h}$. The triple $(\mathfrak{G}, \mathfrak{g}, \mathfrak{h})$ is called a quasi-twilled Lie pseudoalgebra if $\mathfrak{h}$ is a Lie subpseudoalgebra of $(\mathfrak{G}, [\cdot*\cdot]_\mathfrak{G})$.
\end{defn}\begin{rem}
In a twilled Lie pseudoalgebra, both $\mathfrak{g}$ and $\mathfrak{h}$ retain their Lie pseudoalgebra structures within $\mathfrak{G}$. This imposes stricter conditions compared to the quasi-twilled Lie pseudoalgebra, that only requires $\mathfrak{h}$ to be a subpseudoalgebra, allowing $\mathfrak{g}$ to interact more flexibly with $\mathfrak{h}$.
\end{rem}
\noindent For any Lie algebra $\mathfrak{b}$, if the dimension of $\mathfrak{b}$ is at least two, then the current pseudoalgebra $Cur(\mathfrak{b}):=H\otimes \mathfrak{b}$ is a quasi-twilled Lie pseudoalgebra. Let $\mathfrak{r}$ denote the radical of $\mathfrak{b}$, then by the Levi decomposition theorem, there exists a simsimple subalgebra $\mathfrak{l}$ such that$$\mathfrak{b}=\mathfrak{l}\oplus \mathfrak{r}.$$
Consequently, the current pseudoalgebra decomposes as $$Cur(\mathfrak{b})=Cur(\mathfrak{l})\oplus Cur(\mathfrak{r}),$$ where $Cur(\mathfrak{l})$ is semsimple and $Cur(\mathfrak{r})$ is solvable. This decomposition ensures that $Cur(\mathfrak{b})$ is a twilled Lie pseudoalgebra. 
Moreover, rank two quasi-twilled Lie pseudoalgebras over $\mathbf{k}[x]$ are twilled Lie pseudoalgebras, as shown in \cite[Theorem 3.4]{Wu1}.
\\Next, we generalize this result to Lie pseudoalgebras over the enveloping algebra $U(\mathfrak{g})$ of a finite-dimensional Lie algebra $\mathfrak{g}$. To facilitate this generalization, we use the notation $((12)\otimes_H id)$ to denote the linear map $((12)\otimes_H id)(f\otimes g\otimes_Hc)=g\otimes f\otimes_H c$ for any $f\otimes g\otimes_H c\in H^{\otimes 2} \otimes_H \mathfrak{g}.$ To prove certain rank two quasi-twilled Lie pseudoalgebras are indeed twilled Lie pseudoalgebra, we establish the following result.
\begin{thm}\label{QTPC}
A quasi-twilled Lie pseudoalgebra corresponds to a tuple $(\mathfrak{g}, \mathfrak{h},\pi, \mu, \rho, \eta,\theta)$, where $\mathfrak{g}$ is a left $H$-module, $\mathfrak{h}$ is a Lie pseudoalgebra with a pseudobracket $\mu: \mathfrak{h} \otimes \mathfrak{h} \to H^{\otimes 2}\otimes_H\mathfrak{h},$ defined by $ (u,v) \mapsto [u* v]_\mathfrak{h},$ and the direct sum of left $H$-modules $\mathfrak{G}=\mathfrak{g} \boxplus \mathfrak{h} $ forms a Lie pseudoalgeba with pseudobracket $\O=\pi+\eta+\rho+\mu+\theta$. The maps $\pi,\eta,\rho,\theta$ are $H^{\otimes 2}$-linear maps and are defined by $$\pi: \mathfrak{g} \otimes \mathfrak{g} \to H^{\otimes 2}\otimes_H\mathfrak{g}, \quad
 \rho: \mathfrak{g} \otimes \mathfrak{h} \to H^{\otimes 2}\otimes_H \mathfrak{h}, \quad
 \eta: \mathfrak{g} \otimes \mathfrak{h} \to H^{\otimes 2}\otimes_H\mathfrak{g}, \quad
 \theta: \mathfrak{g} \otimes \mathfrak{g} \to H^{\otimes 2}\otimes_H\mathfrak{h}.$$ The pseudobracket $\Omega$ on $\mathfrak{g} \boxplus \mathfrak{h}$ is explicitly given by: 
 \begin{align}\label{eq:quasi-mult}
\Omega\big((x, u),(y,v)\big) = (\pi(x\otimes y) + \eta(x\otimes v) -((12)\otimes_Hid) \eta(y\otimes u),\quad\qquad\qquad \\ \nonumber \qquad\qquad [u*v]_\mathfrak{h} + \rho(x\otimes v) -((12)\otimes_Hid)\rho (y\otimes u) + \theta(x\otimes y) ), 
\end{align}for all $x,y,z\in \mathfrak{g}$, $u,v,w \in \mathfrak{h}$. \\
Furthermore, the following compatibility conditions hold for all $x,y,z\in \mathfrak{g}$, $u,v,w \in \mathfrak{h}$, \begin{align*}
\text{(PC1):}\quad& [u*v]_\mathfrak{h}= -((12)\otimes_Hid)[v*u]_\mathfrak{h},\\
\text{(PC2):}\quad &
\pi(x\otimes\pi(y\otimes z)) - \pi(\pi(x\otimes y)\otimes z ) - ((12)\otimes_Hid)\pi(y\otimes \pi(x\otimes z)) \\
& \qquad =((12)\otimes_Hid) \eta(y\otimes \theta(x\otimes z)) -\eta(x\otimes \theta(y\otimes z))-((123)\otimes_Hid) \eta(z \otimes \theta(x\otimes y)),
\\ \text{(PC3):}\quad&
 \rho(x\otimes \theta(y\otimes z) )
 +((123)\otimes_Hid)\rho(z \otimes \theta(x\otimes y) )
-((12)\otimes_Hid)\rho(y\otimes \theta(x\otimes z) )\\
& \qquad=((12)\otimes_Hid) \theta(y\otimes \pi(x\otimes z)) + \theta(\pi(x\otimes y)\otimes z)- \theta(x\otimes \pi(y\otimes z)),\\
\text{(PC4):}\quad& \pi(x \otimes \eta(y \otimes w)) + \eta(x \otimes \rho(y \otimes w)) - \eta(\pi(x \otimes y) \otimes w) \\
&\qquad - ((12) \otimes_H \mathrm{id})\pi(y \otimes \eta(x \otimes w)) -((12) \otimes_H \mathrm{id}) \eta(y \otimes \rho(x \otimes w)) = 0,\\
\text{(PC5):}\quad&\rho(x \otimes \rho(y \otimes w)) + \theta(x \otimes \eta(y \otimes w))- \rho(\pi(x \otimes y) \otimes w))-[ \theta(x \otimes y) * w]_\mathfrak{h} \\
&\qquad - ((12) \otimes_H \mathrm{id})\rho(y \otimes \rho(x \otimes w)) - ((12) \otimes_H \mathrm{id})\theta(y \otimes \eta(x \otimes w)) = 0,\\
\text{(PC6):}\quad& \eta(x \otimes [v * w]_\mathfrak{h}) - \eta(\eta(x \otimes v) \otimes w) + ((23) \otimes_H \mathrm{id})\eta(\eta(x \otimes w) \otimes v) = 0,\\
 \text{(PC7):}\quad&\rho(x \otimes [v * w]_\mathfrak{h}) - \rho(\eta(x \otimes v) \otimes w) - [\rho(x \otimes v) * w]_\mathfrak{h} \\&\qquad+((23) \otimes_H \mathrm{id}) \rho(\eta(x \otimes w) \otimes v) - ((12) \otimes_H \mathrm{id})[v * \rho(x \otimes w)]_\mathfrak{h} = 0,\\
\text{(PC8):}\quad&[u * [v * w]_\mathfrak{h}]_\mathfrak{h} - [[u * v]_\mathfrak{h} * w]_\mathfrak{h} -((12) *_H \mathrm{id}) [v * [u * w]_\mathfrak{h}]_\mathfrak{h} = 0.
\end{align*}
\end{thm}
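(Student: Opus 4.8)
The plan is to prove the stated correspondence in both directions, using the lift maps and the bigrading of $C^*(\mathfrak{g}\boxplus\mathfrak{h},\mathfrak{g}\boxplus\mathfrak{h})$ recalled above. First I would start from a quasi-twilled Lie pseudoalgebra $(\mathfrak{G},[\cdot*\cdot]_\mathfrak{G})$ with $\mathfrak{G}=\mathfrak{g}\boxplus\mathfrak{h}$ and decompose its bracket. Using $(\mathfrak{g}\boxplus\mathfrak{h})^{\otimes 2}\cong\mathfrak{g}^{\otimes 2}\boxplus(\mathfrak{g}\otimes\mathfrak{h})\boxplus(\mathfrak{h}\otimes\mathfrak{g})\boxplus\mathfrak{h}^{\otimes 2}$ together with $H^{\otimes 2}\otimes_H(\mathfrak{g}\boxplus\mathfrak{h})\cong(H^{\otimes 2}\otimes_H\mathfrak{g})\boxplus(H^{\otimes 2}\otimes_H\mathfrak{h})$, one writes $[\cdot*\cdot]_\mathfrak{G}$ as the sum of its eight components; the hypothesis that $\mathfrak{h}$ is a subpseudoalgebra forces the $\mathfrak{h}^{\otimes 2}\to H^{\otimes 2}\otimes_H\mathfrak{g}$ component to vanish. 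Name the remaining components $\pi$ and $\theta$ (on $\mathfrak{g}^{\otimes 2}$, with targets $\mathfrak{g}$ and $\mathfrak{h}$), $\eta$ and $\rho$ (on $\mathfrak{g}\otimes\mathfrak{h}$, with targets $\mathfrak{g}$ and $\mathfrak{h}$), and $\mu=[\cdot*\cdot]_\mathfrak{h}$; each is $H^{\otimes 2}$-linear, being a component of the $H^{\otimes 2}$-linear map $[\cdot*\cdot]_\mathfrak{G}$. Since $\Omega:=[\cdot*\cdot]_\mathfrak{G}$ lies in $C^2$, it satisfies the skew-symmetry identity \eqref{eq11}; comparing its two sides componentwise shows this is equivalent to $\pi$, $\theta$ and $\mu$ being skew-symmetric — the case of $\mu$ being exactly (PC1) — together with the $\mathfrak{h}\otimes\mathfrak{g}$-components being obtained by applying $-((12)\otimes_H\mathrm{id})$ to $\eta$ and to $\rho$. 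Substituting, one recovers the explicit formula \eqref{eq:quasi-mult}, i.e. $\Omega=\hat\pi+\hat\eta+\hat\rho+\hat\mu+\hat\theta$, a sum of homogeneous cochains of definite bidegree.

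Next I would translate the Jacobi identity for $[\cdot*\cdot]_\mathfrak{G}$ into the Maurer--Cartan equation $[\Omega,\Omega]_{NR}=0$ in $(C^*(\mathfrak{g}\boxplus\mathfrak{h},\mathfrak{g}\boxplus\mathfrak{h}),[\cdot,\cdot]_{NR})$ and expand it bilinearly as $\sum[\hat a,\hat b]_{NR}$ over $a,b\in\{\pi,\eta,\rho,\mu,\theta\}$. By the first lemma above, each $[\hat a,\hat b]_{NR}$ is homogeneous of bidegree the sum of those of $\hat a$ and $\hat b$, and by the second lemma several of these brackets vanish identically; collecting the survivors according to bidegree yields a short list of homogeneous cochains whose simultaneous vanishing is equivalent to $[\Omega,\Omega]_{NR}=0$, because $C^3$ splits as a direct sum over bidegrees. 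Evaluating the component with output in $\mathfrak{g}$ (respectively $\mathfrak{h}$) on pure tensors from $\mathfrak{g}^{\otimes 3}$ and unwinding the circle product \eqref{circleproduct} produces (PC2) (respectively (PC3)); the analogous computation on inputs from $\mathfrak{g}^{\otimes 2}\otimes\mathfrak{h}$ gives (PC4) and (PC5), on inputs from $\mathfrak{g}\otimes\mathfrak{h}^{\otimes 2}$ gives (PC6) and (PC7), and on inputs from $\mathfrak{h}^{\otimes 3}$ gives (PC8). The one remaining potential component, $\mathfrak{h}^{\otimes 3}\to H^{\otimes 3}\otimes_H\mathfrak{g}$, vanishes automatically — no product of components of $\Omega$ can produce such a term — consistently with $\mathfrak{h}$ being a subpseudoalgebra.

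For the converse, given a tuple $(\mathfrak{g},\mathfrak{h},\pi,\mu,\rho,\eta,\theta)$ with $\pi$ and $\theta$ skew-symmetric, $\mu$ a skew-symmetric pseudobracket on $\mathfrak{h}$ (PC1), and (PC2)--(PC8) satisfied, define $\Omega$ on $\mathfrak{g}\boxplus\mathfrak{h}$ by \eqref{eq:quasi-mult}; running the two preceding steps in reverse shows that $\Omega$ is a skew-symmetric $H^{\otimes 2}$-linear map with $[\Omega,\Omega]_{NR}=0$, hence a Lie pseudoalgebra structure on $\mathfrak{g}\boxplus\mathfrak{h}$, and that $\mathfrak{h}$ is a subpseudoalgebra because \eqref{eq:quasi-mult} contains no $\mathfrak{h}^{\otimes 2}\to H^{\otimes 2}\otimes_H\mathfrak{g}$ term; thus $(\mathfrak{g}\boxplus\mathfrak{h},\mathfrak{g},\mathfrak{h})$ is quasi-twilled, establishing the correspondence. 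The hard part will be the bookkeeping in the middle step: expanding each relevant $[\hat a,\hat b]_{NR}$ through \eqref{circleproduct}, restricting to the correct shuffle set $S_{q,p-1}$, and carrying the signs $(-1)^\sigma$ together with the $H$-linear permutation operators $((12)\otimes_H\mathrm{id})$, $((123)\otimes_H\mathrm{id})$ and $((23)\otimes_H\mathrm{id})$ through the $H^{\otimes 2}$-linearity constraints, so that the permutations displayed in (PC2)--(PC8) come out exactly as stated. The structural part — which summand of the Jacobiator lands in which condition — is immediate from the bigrading, but pinning down the precise shuffle coefficients and permutation operators is the genuine computation.
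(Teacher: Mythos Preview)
Your proposal is correct and arrives at the same conditions (PC1)--(PC8), but the route differs from the paper's. The paper proves the theorem by a direct elementary computation: it writes out $\Omega$ on the four pairs $(x,0),(y,0)$; $(x,0),(0,v)$; $(0,u),(y,0)$; $(0,u),(0,v)$ to check skew-symmetry, and then expands the Jacobiator $J\big((x,u),(y,v),(z,w)\big)$ explicitly on the four input types $(\mathfrak{g},\mathfrak{g},\mathfrak{g})$, $(\mathfrak{g},\mathfrak{g},\mathfrak{h})$, $(\mathfrak{g},\mathfrak{h},\mathfrak{h})$, $(\mathfrak{h},\mathfrak{h},\mathfrak{h})$, separating each into its $\mathfrak{g}$- and $\mathfrak{h}$-components to read off (PC2)--(PC8). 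Only \emph{after} the proof does the paper reinterpret these conditions as the vanishing of the bidegree components of $[\Omega,\Omega]_{NR}$. You instead take this Maurer--Cartan reformulation as the organizing principle from the outset, using the bigrading lemmas to predict in advance which $[\hat a,\hat b]_{NR}$ survive and where they land, and you also make the converse direction explicit. Your approach is conceptually tidier and explains \emph{why} exactly eight conditions appear (one per nonzero bidegree in $C^3$), at the cost of invoking the graded Lie algebra machinery; the paper's approach is more self-contained and hands-on. The actual computations---unwinding the circle products on shuffles and tracking the permutation operators---are the same in both, and your caveat that this bookkeeping is the genuine work is accurate.
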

\begin{proof}
Let $(\mathfrak{G}, \O)$ be a quasi-twilled Lie pseudoalgebra with a decomposition $\mathfrak{G} = \mathfrak{g} \boxplus \mathfrak{h}$ as left $H$-modules. The pseudobracket $\O$ on $\mathfrak{G}$ is defined by Eq. \eqref{eq:quasi-mult}. Since $\pi, \theta, \rho, \eta, \mu={[\cdot*\cdot]}_\mathfrak{h}$ are $H^{\otimes 2}$-linear maps, $\O$ is $H^{\otimes2}$-linear. We now verify skew-symmetry and the Jacobi identity.\\
 \noindent \textbf{Skew-symmetry:} The pseudobracket $\O$ is given explicitly by:
\begin{align*}
 \O((x, 0),(y, 0)) &= \Big(\pi(x\otimes y),\quad \theta(x\otimes y) \Big), \\
 \O( (x, 0), (0, v)) &= \Big( \eta(x\otimes v),\quad \rho(x\otimes v) \Big),\\ 
 \O( (0, u), (y, 0)) &= \Big( -((12)\otimes_Hid) \eta(y\otimes u), -((12)\otimes_Hid)\rho (y\otimes u )\Big), \\
 \O( (0, u),(0, v)) &= \Big( 0,\quad [u*v]_\mathfrak{h} \Big).
\end{align*} From these identities, it follows that
\begin{align*} 
 \Omega ((x, u), (y, v)) &= -((12)\otimes_Hid)\Omega \Big((y, v), (x, u)\Big),
\end{align*} provided each component satisfies the skew-symmetry individually :
\begin{align*}
 \Omega ( (x, 0), (y, 0))&= -((12)\otimes_Hid)\Omega\Big( (y, 0), (x, 0)\Big),\\
 \Omega ( (x, 0), (0, v))&= -((12)\otimes_Hid)\Omega\Big( (0, v), (x, 0)\Big),\\
 \Omega ( (0, u), (y, 0))&= -((12)\otimes_Hid)\Omega \Big( (y, 0), (0, u)\Big),\\
 \Omega( (0, u), (0, v))&=-((12)\otimes_Hid)\Omega\Big( (0, v), (0, u)\Big). \end{align*}Thus, $\Omega$ is skew-symmetric. \\
\noindent \textbf{Jacobi identity:} The Jacobi identity for the pseudobracket $\Omega$ is given by:
\begin{align*}
J\Big((x, u), (y, v), (z, w)\Big) 
&= \Omega\big((x, u),\Omega \big((y, v), (z, w)\big)\big) 
- \Omega \big(\Omega \big((x,u), (y, v)\big), (z, w)\big) \\&\quad- ((12)\otimes_Hid)\Omega\big((y, v), \Omega\big((x, u), (z, w)\big)\big) =0,
\end{align*}
for all $ x, y, z \in \mathfrak{g} $ and $ u, v, w \in \mathfrak{h} $. Where $J$ is called the Jacobiator. To better understand the structure of the $\Omega$ on $\mathfrak{G}$, we begin by analyzing the Jacobi identity under specific input configurations. \\
\noindent
 \textbf{Case 1:} Let us consider the case where all inputs lie in $ \mathfrak{g} $. Then 
$
\Omega\big((x, 0) , (y, 0)\big) = \big( \pi(x \otimes y), \theta(x \otimes y) \big),
$
The Jacobi identity becomes:
\begin{align*}
 &J\Big((x, 0) , (y, 0) , (z, 0)\Big)\\ &= \Big(\pi(x \otimes \pi(y \otimes z)) + \eta(x \otimes \theta(y \otimes z)), \quad \rho(x \otimes \theta(y \otimes z)) + \theta(x \otimes \pi(y \otimes z))\Big)\\ 
& \quad- \Big((\pi(\pi(x \otimes y) \otimes z)) +((123)\otimes_Hid) \eta(z \otimes \theta(x \otimes y)), \quad ((123)\otimes_Hid)\rho(z \otimes \theta(x \otimes y)) + \theta(\pi(x \otimes y) \otimes z) \Big) \\
& \quad- ((12)\otimes_Hid)\Big(\pi(y \otimes \pi(x \otimes z)) 
+ \eta(y \otimes \theta(x \otimes z)), \quad \rho\big(y \otimes \theta(x \otimes z)) + \theta(y \otimes \pi(x \otimes z)) \Big) = 0.
\end{align*}
Equating components leads to:
\begin{align*}
\text{(PC2)} &\quad \pi(x \otimes \pi(y \otimes z)) - \pi(\pi(x \otimes y) \otimes z) - ((12)\otimes_Hid)\pi(y \otimes \pi(x \otimes z)) \\&\qquad\qquad=((12)\otimes_Hid) \eta(y \otimes \theta(x \otimes z)) - \eta(x \otimes \theta(y \otimes z)) -((123)\otimes_Hid) \eta(z \otimes \theta(x \otimes y)), \\
\text{(PC3)} &\quad\rho(x \otimes \theta(y \otimes z))+ ((123)\otimes_Hid)\rho(z \otimes \theta(x \otimes y)) - ((12)\otimes_Hid)\rho(y \otimes \theta(x \otimes z)) \\ &\qquad\qquad=((12)\otimes_Hid) \theta(y \otimes \pi(x \otimes z)) + \theta(\pi(x \otimes y) \otimes z) - \theta(x \otimes \pi(y \otimes z)).
\end{align*}
\noindent \textbf{Case 2:} Let $x,y\in \mathfrak{g}$ and $w\in \mathfrak{h}$ . Expanding the Jacobi identity yields:
\begin{align*}
& J\Big((x, 0) , (y, 0), (0, w)\Big) \\
&= \Omega\big((x, 0) , \Omega\big((y, 0) , (0, w)\big)\big)
- \Omega\big(\Omega\big((x, 0) , (y, 0)\big) , (0, w)\big) \\
&\quad - ((12) \otimes_H \mathrm{id}) \Omega\big((y, 0), \Omega\big((x, 0) , (0, w)\big)\big)
\\
&= \Big(\pi(x \otimes \eta(y \otimes w)) + \eta(x \otimes \rho(y \otimes w)),\quad \rho(x \otimes \rho(y \otimes w)) + \theta(x \otimes \eta(y \otimes w)) \Big) \\
&\quad - \Big(\eta(\pi(x \otimes y) \otimes w),\quad \rho(\pi(x \otimes y) \otimes w))+ [ \theta(x \otimes y) * w]_\mathfrak{h} \Big) \\
&\quad -((12) \otimes_H \mathrm{id}) \Big(\pi(y \otimes \eta(x \otimes w)) + \eta(y \otimes \rho(x \otimes w)),\quad\rho(y \otimes \rho(x \otimes w)) + \theta(y \otimes \eta(x \otimes w)) \Big)=0.
\end{align*}
This leads to the compatibility conditions:
\begin{align*}
 \text{(PC4)}\quad & \pi(x \otimes \eta(y \otimes w)) + \eta(x \otimes \rho(y \otimes w))- \eta(\pi(x \otimes y) \otimes w) \\
&\qquad\qquad - ((12) \otimes_H \mathrm{id})\pi(y \otimes \eta(x \otimes w)) -((12) \otimes_H \mathrm{id}) \eta(y \otimes \rho(x \otimes w)) = 0,\\
 \text{(PC5)}
 \quad & \rho(x \otimes \rho(y \otimes w)) + \theta(x \otimes \eta(y \otimes w)) - \rho(\pi(x \otimes y) \otimes w))-[ \theta(x \otimes y) * w]_\mathfrak{h} \\
&\qquad\qquad - ((12) \otimes_H \mathrm{id})\rho(y \otimes \rho(x \otimes w)) - ((12) \otimes_H \mathrm{id})\theta(y \otimes \eta(x \otimes w)) = 0.
\end{align*}
\noindent\textbf{Case 3:} 
 Let $ x \in \mathfrak{g} $, $ v, w \in \mathfrak{h} $. Then
\begin{align*}
& J\Big((x, 0), (0, v) , (0, w)\Big) \\
&= \Omega\big((x, 0) , \Omega\big((0,v) , (0, w)\big)\big)
- \Omega\big(\Omega\big((x, 0) , (0, v)\big) , (0, w)\big) - ((12) \otimes_H \mathrm{id}) \Omega\big((0, v) , \Omega\big((x, 0) , (0, w)\big)\big).
\\
&= \Omega\big((x, 0) , (0, [v * w]_\mathfrak{h})\big)
- \Omega\big((\eta(x \otimes v), \rho(x \otimes v)) , (0, w)\big) - ((12) \otimes_H \mathrm{id}) \Omega\big((0, v), (\eta(x \otimes w), \rho(x \otimes w))\big) \\
&= \Big(\eta(x \otimes [v * w]_\mathfrak{h}), \rho(x \otimes [v * w]_\mathfrak{h})\Big)- \Big(\eta(\eta(x \otimes v) \otimes w), \rho(\eta(x \otimes v) \otimes w) + [\rho(x \otimes v) * w]_\mathfrak{h} \Big) \\
&\quad - ((12) \otimes_H \mathrm{id})\Big(-((23) \otimes_H \mathrm{id})\eta(\eta(x \otimes w) \otimes v), -((23) \otimes_H \mathrm{id})\rho(\eta(x \otimes w) \otimes v) + [v * \rho(x \otimes w)]_\mathfrak{h} \Big).
\end{align*} This gives:
\begin{align*}
 \text{(PC6)}&\quad \eta(x \otimes [v * w]_\mathfrak{h}) - \eta(\eta(x \otimes v) \otimes w) + ((123) \otimes_H \mathrm{id})\eta(\eta(x \otimes w) \otimes v) = 0,\\
 \text{(PC7)}&\quad \rho(x \otimes [v * w]_\mathfrak{h}) - \rho(\eta(x \otimes v) \otimes w) - [\rho(x \otimes v) * w]_\mathfrak{h} +((123) \otimes_H \mathrm{id}) \rho(\eta(x \otimes w) \otimes v) \\& \qquad \qquad\qquad\qquad\qquad \qquad\qquad\qquad\qquad\qquad\qquad\qquad- ((12) \otimes_H \mathrm{id})[v * \rho(x \otimes w)]_\mathfrak{h} = 0.
\end{align*}
\noindent \textbf{Case 4:}
 Now consider all inputs from $ \mathfrak{h} $. 
We compute
\begin{align*}
& J\Big((0, u) ,(0, v) , (0, w)\Big) \\
&= \Omega\big((0, u) , \Omega\big((0,v) , (0, w)\big)\big)
- \Omega\big(\Omega\big((0,u), (0, v)\big) , (0, w)\big) \\
&\quad - ((12) \otimes_H \mathrm{id}) \Omega\big((0, v) ,\Omega\big((0,u) , (0, w)\big)\big) \\
&= \Big(0, [u *[v *w]_\mathfrak{h}]_\mathfrak{h} \Big)
- \Big(0, [[u *v]_\mathfrak{h} * w]_\mathfrak{h} \Big)
- ((12) \otimes_H \mathrm{id}) \Big(0, [v * [u * w]_\mathfrak{h}]_\mathfrak{h} \Big).
\end{align*}This yields the Jacobi identity in $ \mathfrak{h}$ 
\begin{align*}
 \text{(PC8)} \quad [u * [v * w]_\mathfrak{h}]_\mathfrak{h} - [[u * v]_\mathfrak{h} * w]_\mathfrak{h} -((12) *_H \mathrm{id}) [v * [u * w]_\mathfrak{h}]_\mathfrak{h} = 0,
\end{align*} confirming that $ \mathfrak{h} $ is a Lie pseudoalgebra over $ H $.\\
The compatibility conditions PC1-PC8 are satisfied. In particular, PC1 and PC8 ensure that $ \mathfrak{h} $ is a Lie pseudoalgebra over $H$ , and $ \mathfrak{G} $ is a quasi-twilled Lie pseudoalgebra. This completes the proof.
\end{proof}
\begin{rem}\label{rem:PC-interpretations-pseudo}
The identities provided in Theorem \ref{QTPC} admit the following interpretations within the framework of pseudoalgebras over a cocommutative Hopf algebra $ H $.
\begin{itemize}
 \item [\textbf{PC1}] \textbf{and PC8} ensure that $ \mathfrak{h} $ is a Lie pseudoalgebra over $ H $, and is a sub-pseudoalgebra of $ \mathfrak{G} $.
 \item [\textbf{PC2}] shows that $ (\mathfrak{g}, \pi) $ is not a Lie pseudoalgebra on its own, due to the presence of interaction terms $ \eta $ and $ \theta $.
 \item [\textbf{PC3}] encodes a twisted cocycle condition, which governs the deviation of $ \pi $ from being a Lie pseudoalgebra bracket. 
 \item [\textbf{PC5}]describes a twisted module condition, detailing how the action $ \rho $ of $ \mathfrak{g} $ on $ \mathfrak{h} $ fails to satisfy the usual Jacobi identity in the category $ \mathcal{M}^*(H) $. This failure arises due to the interaction term $ \theta $ and the pseudobracket on $ \mathfrak{h} $.
 \item [\textbf{PC6}] establishes that $ \eta $ defines a representation of $ \mathfrak{h} $ on $ \mathfrak{g} $ in the pseudotensor category $ \mathcal{M}^*(H) $,
 \item [\textbf{PC7}]reflects a twisted Jacobi identity, highlighting the deviation of $ \rho $ from being a strict Lie pseudoalgebra representation. 
\end{itemize}
\end{rem}
\begin{lem}
 Let $(\mathfrak{G}, \mathfrak{g}, \mathfrak{h})$ be a quasi-twilled Lie pseudoalgebra, with pseudobracket $\Omega \in C^2(\mathfrak{G},\mathfrak{G})$ as defined in Theorem \ref{QTPC}.
\end{lem}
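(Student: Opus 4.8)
The natural conclusion to establish here is that $\Omega$ is a Maurer--Cartan element of the controlling graded Lie algebra $\big(C^*(\mathfrak{G},\mathfrak{G}),[\cdot,\cdot]_{NR}\big)$, i.e. $[\Omega,\Omega]_{NR}=0$, and hence that $d_\Omega:=[\Omega,-]_{NR}$ is a differential making $\big(C^*(\mathfrak{G},\mathfrak{G}),[\cdot,\cdot]_{NR},d_\Omega\big)$ a differential graded Lie algebra. The plan is to identify the left-hand side of the Maurer--Cartan equation with the Jacobiator of the pseudobracket $\Omega$, which vanishes by hypothesis, and then to feed this into the graded Jacobi identity for $[\cdot,\cdot]_{NR}$.

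First I would unwind the Nijenhuis--Richardson bracket on the degree-one element $\Omega\in C^2(\mathfrak{G},\mathfrak{G})$. Since $p=q=2$, one has $(-1)^{(p-1)(q-1)}=-1$, so $[\Omega,\Omega]_{NR}=2\,\Omega\circledcirc\Omega$, and by \eqref{circleproduct}
\[
(\Omega\circledcirc\Omega)(x_1,x_2,x_3)=\sum_{\sigma\in S_{2,1}}(-1)^\sigma(\sigma\otimes_H\mathrm{id})\,\Omega\big(\Omega(x_{\sigma(1)},x_{\sigma(2)}),x_{\sigma(3)}\big).
\]
The heart of the computation is to check that, after writing each $(\sigma\otimes_H\mathrm{id})$ in terms of the leg transpositions $((12)\otimes_H\mathrm{id})$ and $((123)\otimes_H\mathrm{id})$ and accounting for the signs $(-1)^\sigma$, the three summands over the $(2,1)$-shuffles are precisely the three terms of the Jacobiator
\[
J\big((x,u),(y,v),(z,w)\big)=\Omega\big((x,u),\Omega((y,v),(z,w))\big)-\Omega\big(\Omega((x,u),(y,v)),(z,w)\big)-((12)\otimes_H\mathrm{id})\Omega\big((y,v),\Omega((x,u),(z,w))\big)
\]
used in the proof of Theorem \ref{QTPC}. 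This yields $\tfrac12[\Omega,\Omega]_{NR}=J$ on all triples in $\mathfrak{G}$.

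Since $(\mathfrak{G},\Omega)$ is a Lie pseudoalgebra by assumption, $J\equiv 0$, hence $[\Omega,\Omega]_{NR}=0$. (Equivalently, writing $\Omega=\hat\pi+\hat\eta+\hat\rho+\hat\mu+\hat\theta$ with bidegrees $\|\hat\pi\|=\|\hat\rho\|=1|0$, $\|\hat\eta\|=\|\hat\mu\|=0|1$, $\|\hat\theta\|=2|-1$ and projecting $[\Omega,\Omega]_{NR}=0$ onto its bidegree-homogeneous components — using the two bigrading lemmas, which in particular annihilate the a priori $4|-2$ term $[\hat\theta,\hat\theta]_{NR}$ — one recovers exactly the identities PC2--PC8 of Theorem \ref{QTPC}, PC1 being the skew-symmetry already assumed.) Finally, by the graded Jacobi identity for the $NR$-bracket, for every $f\in C^*(\mathfrak{G},\mathfrak{G})$ one has $d_\Omega^2(f)=[\Omega,[\Omega,f]_{NR}]_{NR}=\tfrac12[[\Omega,\Omega]_{NR},f]_{NR}=0$, so $d_\Omega$ is a differential and $\big(C^*(\mathfrak{G},\mathfrak{G}),[\cdot,\cdot]_{NR},d_\Omega\big)$ is a dgLa, as claimed.

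The main obstacle is the bookkeeping in the first display: matching the $(p,q)$-shuffle normalization and the signs of \eqref{circleproduct} with the precise form of the Jacobiator from Theorem \ref{QTPC}, while keeping in mind that permuting tensor factors inside $H^{\otimes n}\otimes_H\mathfrak{G}$ is implemented by the operators $((12)\otimes_H\mathrm{id})$, $((123)\otimes_H\mathrm{id})$, rather than by a bare $S_n$-action on $\mathfrak{G}^{\otimes n}$. Once this dictionary between shuffles and $H$-twists is pinned down, the rest is formal.
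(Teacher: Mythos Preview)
Your proposal is correct. Your primary argument --- identifying $\tfrac12[\Omega,\Omega]_{NR}$ directly with the Jacobiator $J$ of the pseudobracket and invoking the Lie pseudoalgebra axiom --- is more conceptual than the paper's route. The paper instead expands $[\Omega,\Omega]_{NR}$ bilinearly into all cross terms $[\pi,\pi]_{NR},\,2[\pi,\rho]_{NR},\,\ldots$, cancels the ones that vanish by bidegree, and then matches the surviving bigraded components one-by-one with the identities PC2--PC8 of Theorem~\ref{QTPC}; this is exactly the decomposition you relegate to a parenthetical remark. Your approach gets to $[\Omega,\Omega]_{NR}=0$ in a single stroke and then derives the dgLa structure on $\big(C^*(\mathfrak{G},\mathfrak{G}),[\cdot,\cdot]_{NR},d_\Omega\big)$ formally from graded Jacobi, while the paper's approach buys the explicit dictionary between each compatibility condition and its corresponding NR-bracket component, which is used later when constructing the curved $\mathfrak{L}_\infty$ operations $l_0,l_1,l_2$ in Theorem~\ref{thm:pseudo-mc-pseudo}.
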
 \noindent Further, using $\Omega $, the Nijenhuis-Richardson bracket $NR$-bracket $\big[\Omega, \Omega ]_{NR}$ expands as:
\begin{align*}
 \big[\Omega, \Omega \big]_{NR}
 =& \big[\pi+\rho+\mu +\eta+\theta,\ \pi+\rho+\mu +\eta+\theta\big]_{NR}
\\=& [\pi, \pi]_{NR} + [\pi, \rho]_{NR} + [\pi, \mu]_{NR} + [\pi, \eta]_{NR} + [\pi, \theta]_{NR} 
\\&+[\rho, \pi]_{NR}
+ [\rho, \rho]_{NR} + [\rho, \mu]_{NR} + [\rho, \eta]_{NR} + [\rho, \theta]_{NR} \\& +[\mu, \pi]_{NR}
+ [\mu, \rho]_{NR}
+ [\mu, \mu]_{NR} + [\mu, \eta]_{NR} + [\mu, \theta]_{NR} 
\\&
+ [\eta, \pi]_{NR}
+ [\eta, \rho]_{NR}
+ [\eta, \mu]_{NR}+[\eta, \eta]_{NR} + [\eta, \theta]_{NR} \\&+ [\theta, \pi]_{NR}
+ [\theta, \rho]_{NR}
+ [\theta, \mu]_{NR}+[\theta, \eta]_{NR}+ [\theta, \theta]_{NR}\\
=& [\pi, \pi]_{NR} + 2[\pi, \rho]_{NR} + 2[\pi, \mu]_{NR} + 2[\pi, \eta]_{NR} + 2[\pi, \theta]_{NR} 
\\&
+ [\rho, \rho]_{NR} + 2[\rho, \mu]_{NR} + 2[\rho, \eta]_{NR} + 2[\rho, \theta]_{NR} \\& 
+ [\mu, \mu]_{NR} + 2[\mu, \eta]_{NR} + 2[\mu, \theta]_{NR}. 
\end{align*}
Indeed, $\Omega $ is called a Maurer-Cartan element if $[\Omega, \Omega]_{ NR} = 0$, which holds if and only if the following conditions are satisfied:
\begin{align*}\begin{cases}
\text{(PC2)} \implies &[\pi, \pi]_{NR} + 2\eta \circledcirc \theta = 0,\\ 
\text{(PC3)}\implies &[\rho, \theta]_{NR} + [\pi, \theta]_{NR} = 0, \\
\text{(PC4)} \implies &{[\pi, \eta]}_{NR} + \eta \circledcirc \rho = 0,\\ \text{(PC5)} \implies &\frac{1}{2}[\rho, \rho]_{NR}+ \theta \circledcirc \eta+ [\mu, \theta]_{NR}+ [\pi, \rho]_{NR}= 0,\\
\text{(PC6)}\implies &2{[\mu, \eta]}_{NR} + [\eta, \eta]_{NR} = 0, \\ \text{(PC7)}\implies &{[\rho, \mu]}_{NR} + \rho \circledcirc \eta = 0,\\ \text{(PC8)}\implies &[\mu, \mu]_{NR} = 0.\\
\end{cases}\end{align*} 
Thus $\Omega $ is indeed a Maurer-Cartan element. This characterization will be used later to define the differential of a graded Lie algebra via $d_\Omega :=[\Omega,-]_{NR}$. 

In the following, we classify quasi-twilled Lie $H$-pseudoalgebras that are free $H$-modules of rank $2$, where $H= U(\mathfrak{b})$ denotes the enveloping algebra of a finite-dimensional Lie algebra $\mathfrak{b}$ over a field $ \mathbf{k}$ of characteristic zero. \\
Let $ \mathfrak{G}$ be a quasi-twilled Lie $ H $-pseudoalgebra such that $ \mathfrak{G}=Hu\oplus Hx $ has rank $2$, with basis $\{u,x\}$, and suppose that $Hx$ is a subpseudoalgebra. Then by \cite[Corollary 3.1]{Wu},
the pseudobracket on $x$ satisfies $$\text{either }[x*x]=0, \text{ or }[x*x]=\sum\limits_{i=1}^r(a_i\otimes a_{r+i}-a_{r+i}\otimes a_i)+s\otimes 1-1\otimes s$$ for some linearly independent set $\{a_i|1\leq i\leq 2r\}\subseteq \mathfrak{b}$ and some $s\in\mathfrak{b}$.
\\ If $[x*x]\neq 0$, then by \cite[Theorem 8.1]{BDK}, $Hx$ is either a pseudoalgebra of type $H$ or a pseudoalgebra of type $K$, depending on whether $s$ lies in the space spanned by $\{a_i|1\leq i\leq 2r\}$.

Let $\{a_i|1\leq i\leq n\}$ be a fixed basis of $\mathfrak{b}$. Then $H$ admits a Poincar\'e-Birkhoff-Witt basis consisting of monomials: $$a^{(K)}:=\{\frac{a_1^{k_1}}{k_1!}\frac{a_2^{k_2}}{k_2!}\cdots \frac{a_n^{k_n}}{k_n!}|k_i\in\mathbb{N}\},$$ where $K=(k_1,k_2,\cdots,k_n)\in \mathbb{N}^n$. We denote the degree of $a^{(K)}$ by $|K|:=k_1+k_2+\cdots+k_n.$\\
 The coproduct on $H$ acts on these basis elements as: $$\Delta(a^{(I)})=\sum\limits_{L}a^{(L)}\otimes a^{(I-L)},$$
 where the sum runs over all $ L = (l_1, \dots, l_n) \in \mathbb{N}^n $ such that $ l_i \leq i_i $ for all $ i $, and $ I - L = (i_1 - l_1, \dots, i_n - l_n) $.
 
\begin{lem}\label{alem12}Let $\alpha=\sum\limits_{i=1}^r(a_i\otimes a_{r+i}-a_{i+r}\otimes a_i)+s\otimes 1-1\otimes s,$ for some $a_j,s\in\mathfrak{g}$, where $a_1,a_2,\cdots,a_{2r}$ are linear independent. Assume that $C=\sum_{I,J}C_{I,J}a^{(I)}\otimes a^{(J)}$ for some $C_{I,J}\in\mathbf {k}$.
If \begin{eqnarray} \label{alem}(id\otimes \alpha\Delta)C-(C\Delta\otimes id)C
 + ((23) \otimes_H \mathrm{id}) (C\Delta\otimes id)C = 0,\end{eqnarray}
then $C$ must be of the form $C=\sum\limits_{i=1}^na_i\otimes v_i+s \otimes 1+1\otimes t_2+c_0\otimes 1$, where $a_i\ (1\leq i\leq n)$ is a basis of $\mathfrak{b}$, $v_i,t_2\in \mathfrak{b}$ and $c_0\in\mathbf{k}$.
Moreover, the elements $v_i,t_2, c_0$ satisfy the following conditions:
 \begin{enumerate}
 \item[1.] $[s,t_2]=[\sum\limits_{i=1}^n[a_i\otimes v_i,\Delta(s)]=0$, 
 \item[2.] $\sum\limits_{j=1}^r(a_j\otimes a_{j+r}t_2-a_{j+r}\otimes a_jt_2+a_jt_2\otimes a_{j+r}-a_{j+r}t_2\otimes a_j)+(s\otimes t_2-t_2\otimes s)+\sum\limits_{j=1}^rc_0(a_j\otimes a_{j+r}-a_{j+r}\otimes a_j)=\sum\limits_{i=1}^n(t_2a_i\otimes v_i-v_i\otimes t_2a_i)+\sum\limits_{i=1}^nc_0(a_i\otimes v_i-v_i\otimes a_i),$ 
 \item[3.] $\sum\limits_{i,=1}^{i=r, j=n}a_i\otimes (a_j\otimes a_{j+r}v_i-a_{j+r}\otimes a_jv_i+a_jv_i\otimes a_{j+r}-a_{j+r}v_i\otimes a_j)+\sum\limits_{i=1}^na_i\otimes (s\otimes v_i-v_i\otimes s)+\sum\limits_{j=1}^rs\otimes(a_j\otimes a_{j+r}-a_{j+r}\otimes a_j)=\sum\limits_{1\leq i<j\leq n}[a_i,a_j]\otimes(v_i\otimes v_j-v_j\otimes v_i)+\sum\limits_{i,j=1}^na_i\otimes(v_ia_j\otimes v_j-v_j\otimes v_ia_j)+\sum\limits_{i=1}^ns\otimes(a_i\otimes v_i-v_i\otimes a_i).$
 \end{enumerate}
 In particular, $\alpha=s\otimes 1-1\otimes s$ if and only if $C=s\otimes 1-\lambda \otimes s+c_0\otimes 1$ for some $\lambda, c_0\in\mathbf{k}$.
\end{lem}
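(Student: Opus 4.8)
The plan is to treat \eqref{alem} as a functional equation for the single tensor $C\in H\otimes H$ and to read off the stated form of $C$ by a filtration argument, after which conditions (1)--(3) and the ``in particular'' clause become a bookkeeping exercise in substitution. First I would unwind \eqref{alem} in the PBW basis: writing $C=\sum_j D_j\otimes E_j$ with $D_j,E_j$ among the monomials $a^{(K)}$, using $\Delta(a^{(I)})=\sum_{L}a^{(L)}\otimes a^{(I-L)}$, and identifying $H^{\otimes 3}\otimes_H\mathfrak{g}\cong H^{\otimes 3}$ (as $\mathfrak{g}$ is free of rank $1$), the three summands of \eqref{alem} become, in $H^{\otimes 3}$,
$$T_1=\sum_j D_j\otimes\alpha\,\Delta(E_j),\qquad T_2=\sum_{i,j}D_i(D_j)_{(1)}\otimes E_i(D_j)_{(2)}\otimes E_j,\qquad T_3=\big((23)\otimes_H\mathrm{id}\big)T_2,$$
so that \eqref{alem} reads $T_1-T_2+T_3=0$; since the $a^{(K)}\otimes a^{(L)}\otimes a^{(M)}$ form a basis, this is an explicit (quadratic) system in the $C_{I,J}$.

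The decisive step is to prove $C_{I,J}=0$ whenever $|I|\ge 2$ or $|J|\ge 2$, i.e. $C\in(\mathbf{k}1\oplus\mathfrak{b})\otimes(\mathbf{k}1\oplus\mathfrak{b})$. I would filter $H$ by PBW degree and pass, for leading-term computations, to $\operatorname{gr}(H)=S(\mathfrak{b})$, which is an integral domain. Applying the counit $\varepsilon$ to the various tensor slots of $T_1-T_2+T_3=0$ and simplifying with $(\mathrm{id}\otimes\varepsilon)\alpha=s$ and $(\varepsilon\otimes\mathrm{id})\alpha=-s$ yields a family of auxiliary identities in $H^{\otimes 2}$ and $H$, among them
$$C\,\Delta(v)=(v\otimes 1+1\otimes s)\,C,\qquad (w\otimes 1)C-(1\otimes w)\tau(C)=\alpha\,\Delta(w),\qquad wv=(s+c_0)w,$$
where $v=(\mathrm{id}\otimes\varepsilon)C$, $w=(\varepsilon\otimes\mathrm{id})C$, $c_0=(\varepsilon\otimes\varepsilon)C$ and $\tau$ is the flip. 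Comparing top-degree parts of the first identity in $S(\mathfrak{b})$ (a nonzero leading term cannot cancel in a domain) forces $\deg v\le 1$, and the third gives $v=s+c_0$ once $w\ne 0$. Writing $C=\bar C+\bar v\otimes 1+1\otimes\bar w+c_0(1\otimes 1)$, where $\bar C,\bar v,\bar w$ are obtained by projecting the tensor factors onto $\ker\varepsilon$ along $\mathbf{k}1$, the degree-$\le 1$ ``axis'' pieces contribute only controlled lower-order terms when substituted into $T_1-T_2+T_3=0$, and the maximal-degree part of the resulting relation for the ``bulk'' $\bar C$, read in $S(\mathfrak{b})^{\otimes 3}$, is forced to vanish; hence $\bar C\in\mathfrak{b}\otimes\mathfrak{b}$. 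Thus $C=\sum_i a_i\otimes v_i+s'\otimes 1+1\otimes t_2+c_0(1\otimes 1)$ with $s'\in\mathfrak{b}$, $v_i,t_2\in\mathfrak{b}$, $c_0\in\mathbf{k}$, and since $v=(\mathrm{id}\otimes\varepsilon)C=s'+c_0$, the identity $v=s+c_0$ (together with a direct check in the degenerate case $w=0$) gives $s'=s$.

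It then remains to substitute $C=\sum_i a_i\otimes v_i+s\otimes 1+1\otimes t_2+c_0(1\otimes 1)$ back into $T_1-T_2+T_3=0$ and separate the resulting identity by tensor degree: the lowest-degree slice yields relation (1), and the two remaining (quadratic) slices yield (2) and (3); conversely, any such $C$ satisfying (1)--(3) satisfies \eqref{alem}. For the ``in particular'' clause I set $r=0$, so $\alpha=s\otimes 1-1\otimes s$ and the quadratic part $\alpha_2=\sum_i(a_i\otimes a_{r+i}-a_{r+i}\otimes a_i)$ vanishes; then (3) loses all of its $\alpha_2$-terms and forces every $v_i=0$, (2) forces $t_2\in\mathbf{k}s$, and (1) reduces to the automatic relation $[s,t_2]=0$, giving $C=s\otimes 1-\lambda\otimes s+c_0(1\otimes 1)$; the reverse implication is the direct verification that such a $C$ solves \eqref{alem}.

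The main obstacle is the degree bound in the second paragraph. Because $\operatorname{gr}(H)=S(\mathfrak{b})$ is commutative, the naive top-degree slices of $T_1-T_2+T_3=0$ are frequently satisfied identically, so one must descend to subleading graded pieces while tracking simultaneously the commutator corrections coming from the non-commutativity of $U(\mathfrak{b})$ and the contribution of $\alpha_2$ inside $T_1$ --- it is exactly this combined slice that detects a would-be high-degree component of $C$ and produces the contradiction. Once the degrees of $C$ are bounded, the remaining steps are routine substitution and organization of terms.
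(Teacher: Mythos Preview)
Your overall plan matches the paper's: bound the PBW degrees so that $C\in(\mathbf{k}1\oplus\mathfrak{b})^{\otimes 2}$, then substitute the resulting ansatz into \eqref{alem} and separate by degree to obtain conditions (1)--(3) and the ``in particular'' clause. The difference is in execution. You treat the degree bound as the main obstacle and propose counit projections, auxiliary identities in $H^{\otimes 2}$, and a subleading-term analysis in $\operatorname{gr}(H)$. The paper dispatches it in one line: with $m=\max\{|I|:C_{I,J}\ne 0\}$ and $n=\max\{|J|:C_{I,J}\ne 0\}$, the first tensor slot of $T_1$ has degree $\le m$ while $T_2-T_3$ has degree $\le 2m-1$ there (the degree-$2m$ pieces of $T_2$ and $T_3$ coincide in $S(\mathfrak{b})$ and cancel), so $2m-1\le m$ gives $m\le 1$; the analogous count in the remaining slots gives $2n+1\le n+2$, hence $n\le 1$. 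So your worry that top-degree slices vanish identically is exactly the single cancellation that makes the direct argument work; no descent to subleading pieces is needed.

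Two smaller points. First, the paper does not identify the $(1,0)$-component with $s$ via your counit identity $v=s+c_0$; it writes the ansatz with an undetermined $t_1\in\mathfrak{b}$, substitutes, and obtains the relation $\sum_i a_i\otimes(sv_i-v_it_1)=\sum_i[a_i,t_1]\otimes v_i+t_1\otimes(t_1-s)$, from which $t_1=s$ follows in all cases. Second, your reading of the converse in the ``in particular'' clause is off: it is not ``such a $C$ solves \eqref{alem}'' but rather ``such a $C$ forces $\alpha=s\otimes 1-1\otimes s$''. The paper extracts both directions from condition~(2): if $r=0$ then (2) forces $\sum_i a_i\otimes v_i=0$, and conversely if $\sum_i a_i\otimes v_i=0$ then (2) forces $\sum_{j=1}^r(a_j\otimes a_{j+r}-a_{j+r}\otimes a_j)=0$ and $t_2=\lambda s$.
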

\begin{proof}Assume that $C\neq 0$ and $\alpha\neq 0$. Define $m= max\{|I|| C_{I,J}\neq 0\}$ and $n=max\{|J|| C_{I,J}\neq 0\}$. From Eq. \eqref{alem}, comparing degrees on both sides yields $2m-1=m$ and $2n+1=n+2$, which implies $m=n=1$. Consequently, we may assume that $$C=\sum\limits_{i=1}^n a_i\otimes v_i +t_1\otimes 1+1\otimes t_2+c_0\otimes 1,$$ where $\{a_i\}$ is a basis of $\mathfrak{b}$, $v_i$ ($1\leq i\leq n)$, $t_1,t_2\in\mathfrak{b}$ and $c_0\in\mathbf{k}$. Substituting this ansatz into Eq. \eqref{alem}, we obtain 
\begin{enumerate}
 \item[1.] The degree-zero component gives $st_2-t_2t_1=c_0(t_1-s)$,
 \item[2.] The degree-one components yield $\sum\limits_{j=1}^r(a_j\otimes a_{j+r}t_2-a_{j+r}\otimes a_jt_2+a_jt_2\otimes a_{j+r}-a_{j+r}t_2\otimes a_j)+(s\otimes t_2-t_2\otimes s)+\sum\limits_{j=1}^rc_0(a_j\otimes a_{j+r}-a_{j+r}\otimes a_j)=\sum\limits_{i=1}^n(t_2a_i\otimes v_i-v_i\otimes t_2a_i)+\sum\limits_{i=1}^nc_0(a_i\otimes v_i-v_i\otimes a_i),$
 \item[3.]The mixed-degree terms give $\sum\limits_{i=1}^na_i\otimes(sv_i -v_i t_1)= \sum \limits_{i=1}^n [a_i,t_1] \otimes v_i+ t_1\otimes (t_1-s)$,
 \item[4.]The higher-degree terms (degree two) give \begin{align*}
 & \sum\limits_{i,=1}^{i=r, j=n}a_i\otimes (a_j\otimes a_{j+r}v_i-a_{j+r}\otimes a_jv_i+a_jv_i\otimes a_{j+r}-a_{j+r}v_i\otimes a_j)+\sum\limits_{i=1}^na_i\otimes (s\otimes v_i-v_i\otimes s)\\&\qquad \qquad \qquad \qquad +\sum\limits_{j=1}^rt_1\otimes(a_j\otimes a_{j+r}-a_{j+r}\otimes a_j)\\&=\sum\limits_{1\leq i<j\leq n}[a_i,a_j]\otimes(v_i\otimes v_j-v_j\otimes v_i)+\sum\limits_{i,j=1}^na_i\otimes(v_ia_j\otimes v_j-v_j\otimes v_ia_j)+\sum\limits_{i=1}^nt_1\otimes(a_i\otimes v_i-v_i\otimes a_i).
 \end{align*}
\end{enumerate}
 \noindent From identity (3) \begin{align*}
 \sum\limits_{i=1}^na_i\otimes [s,v_i]+\sum\limits_{i=1}^na_i\otimes v_i(s-t_1)=\sum\limits_{i=1}^n[a_i,t_1]\otimes v_i+t_1\otimes (t_1-s),
\end{align*} we deduce that either $t_1=s$ or $\sum\limits_{i=1}^na_i\otimes v_i= 0$. \\
If $\sum\limits_{i=1}^na_i\otimes v_i=0$, then identity (3) reduces to
 \begin{align*}
0=\sum\limits_{i=1}^n[a_i,t_1]\otimes v_i+t_1\otimes (t_1-s),
\end{align*}
providing $t_1=s$. If $t_1=s$, we get $[s,t_2]=0$, and $[\sum\limits_{i=1}^na_i\otimes v_i,\Delta(s)]=0$ from identity (1) and identity (3) respectively. If $\alpha=s\otimes 1-1\otimes s$, then $\sum\limits_{i=1}^na_i\otimes v_i=0$ by (2). Conversely, if $\sum\limits_{i=1}^na_i\otimes v_i=0$, then $\sum\limits_{i=1}^r(a_i\otimes a_{i+r}-a_{i+r}\otimes a_i)=0$ and $t_2=\lambda s$ for some $\lambda\in\mathbf{k}$. Therefore, we conclude $\alpha=s\otimes 1-1\otimes s$ and $C= s\otimes 1- \lambda \otimes s+ c_0\otimes 1$. \end{proof}
 
 From Lemma \ref{alem12}, we get that if $\alpha=s\otimes 1-1\otimes s$, then $Hx=Cur^H_{\mathbf{k}[s]}(Vir)=H\otimes_{\mathbf{k}[s]}Vir$, where $Vir$ is the Virosoro Lie conformal algebra. Thus we obtain twilled Lie pseudoalgebras $Cur^H_{\mathbf{k}[s]}L$, where $L$ is the rank two twilled Lie pseudoalgebra over $\mathbf{k}[s]$ classified in \cite[Theorem 3.4]{Wu1}. Next, we describe the rank two quasi-twilled Lie pseudoalgebras with an abelian subpseudoalgebra of rank one.
 
 \begin{thm}Let $H= U(\mathfrak{b})$ be the enveloping algebra of a finite-dimensional Lie algebra $\mathfrak{b}$ over the field $\mathbf{k}$ of characteristic zero, and let $ \mathfrak{G}$ be a quasi-twilled Lie $ H $-pseudoalgebra of rank $2$,i.e., $ \mathfrak{G}=Hu\oplus Hx $, where $\{u,x\}$ is an $H$-basis and $Hx$ is an abelian subpseudoalgebra. Then $\mathfrak{G}$ is isomorphic to one of the following types.
\begin{enumerate}
 \item [i.] Direct sum of two rank one Lie pseudoalgebras.
 \item [ii.] $Hx$ is an abelian Lie pseudoalgebra and the only nontrivial pseudobracket is $$\eta(u\otimes x)=(\sum\limits_{J\in \mathbb{N}}C_J\otimes a^{(J)})\otimes_Hu,\text{ for some } C_j\in \mathbf{k}.$$ 
 \item [iii.] $Hx$ is an abelian Lie pseudoalgebra, but $Hu$ is equipped with a simple Lie pseudoalgebra structure. The pseudobracket on $\mathfrak{G}$ is defined by 
 \begin{align*}
\pi(u\otimes u)&= \alpha\otimes_Hu,& \qquad \rho (u\otimes x)&=(\sum\limits_{J\in \mathbb{N}}bC_J\otimes a^{(J)})\otimes_Hx,& \\ 
\eta(u\otimes x)&=(\sum\limits_{J\in \mathbb{N}}C_J\otimes a^{(J)})\otimes_Hu,& \qquad \theta(u\otimes u)&=0.& 
 \end{align*} such that $\alpha=b(C-(12)C)$, where $\alpha=\sum\limits_{i=1}^r(a_i\otimes a_{i+r}-a_{i+r}\otimes a_i)+s\otimes 1-1\otimes s$ for some linearly independent $a_i, s\in\mathfrak{b}$, $b, C_j\in\mathbf{k}$. 
\end{enumerate}
\end{thm}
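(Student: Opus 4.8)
The plan is to translate the classification into the compatibility system of Theorem~\ref{QTPC} and then solve that system by a Poincar\'e--Birkhoff--Witt degree count, in the spirit of Lemma~\ref{alem12} and \cite{Wu}. First I would invoke Theorem~\ref{QTPC}: the quasi-twilled structure on $\mathfrak G = Hu\oplus Hx$ is encoded by a tuple $(\pi,\mu,\rho,\eta,\theta)$ with $\mathfrak g = Hu$ and $\mathfrak h = Hx$; since $Hx$ is abelian, $\mu = [\cdot*\cdot]_{\mathfrak h} = 0$, so PC1 and PC8 hold trivially and every $\mu$-term drops from PC2--PC7. Because $\mathfrak g$ and $\mathfrak h$ are free $H$-modules of rank one and $\pi,\rho,\eta,\theta$ are conformal (hence $H^{\otimes2}$-linear), each is determined by its value on the generators; I write
\[
\pi(u\otimes u) = \alpha\otimes_H u,\qquad \theta(u\otimes u) = \omega\otimes_H x,\qquad \eta(u\otimes x) = C\otimes_H u,\qquad \rho(u\otimes x) = D\otimes_H x,
\]
with $\alpha,\omega,C,D\in H\otimes H$. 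Skew-symmetry of $\Omega$ forces $\pi(u\otimes u) = -((12)\otimes_H\mathrm{id})\pi(u\otimes u)$ and $\theta(u\otimes u) = -((12)\otimes_H\mathrm{id})\theta(u\otimes u)$ (constraining $\alpha,\omega$), while $C$ and $D$ carry no skew-symmetry constraint. The classification then amounts to solving PC2--PC7, now a finite system of polynomial identities in $H^{\otimes3}$ and $H^{\otimes4}$ for $\alpha,\omega,C,D$.

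Next I would bound degrees. Expanding $\alpha,\omega,C,D$ in the PBW monomial basis $a^{(K)}$ and comparing top degrees on the two sides of the quadratic identities PC2, PC3, PC5, PC6 --- the same top-degree argument as in the proof of Lemma~\ref{alem12} --- bounds the degrees of all four elements. Applying \cite[Corollary~3.1]{Wu} to $\pi$ shows that $\alpha$ is either $0$ or $\sum_{i=1}^r(a_i\otimes a_{r+i}-a_{r+i}\otimes a_i)+s\otimes1-1\otimes s$ for some linearly independent $a_i,s\in\mathfrak b$; and PC3, which (by the expansion of $[\Omega,\Omega]_{NR}$ recorded after Theorem~\ref{QTPC}) reads $[\rho,\theta]_{NR}+[\pi,\theta]_{NR}=0$, together with these bounds leaves no room for a nonzero $\omega$, so $\theta = 0$. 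With $\theta = 0$, PC2 becomes $[\pi,\pi]_{NR}=0$, hence $(Hu,\pi)$ is a rank-one Lie pseudoalgebra; by \cite[Theorem~8.1]{BDK} it is abelian when $\alpha = 0$ and the simple current/Virasoro-type pseudoalgebra $Cur^H_{\mathbf k[s]}(Vir)$ when $\alpha\ne0$.

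Then I would split on $\alpha$. If $\alpha = 0$, then $\pi = 0$ and PC5 collapses to $\tfrac12[\rho,\rho]_{NR}=0$; with the degree bound this forces $D = 0$, i.e. $\rho = 0$, so only $\eta$ can be nonzero, and PC6, now $[\eta,\eta]_{NR}=0$, is the $\alpha=0$ specialization of Eq.~\eqref{alem} --- the argument of Lemma~\ref{alem12} together with conformality gives $C\in 1\otimes H$, i.e. $C = \sum_{J}C_J\otimes a^{(J)}$ with $C_J\in\mathbf k$. Then $C = 0$ gives case~(i) (a direct sum of two abelian rank-one pseudoalgebras) and $C\ne0$ gives case~(ii). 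If $\alpha\ne0$, then $Hu$ is the rank-one simple pseudoalgebra, and after substituting the known shape of $\alpha$ and $\theta = 0$ the conditions PC4--PC7 reduce to Eq.~\eqref{alem} for $C$ (with this $\alpha$) together with scalar relations expressing $D$ through $C$ and $\alpha$ through $C$. Lemma~\ref{alem12} pins down $C$, and tracking constants through the remaining conditions yields $\eta(u\otimes x) = (\sum_J C_J\otimes a^{(J)})\otimes_H u$, $\rho(u\otimes x) = (\sum_J bC_J\otimes a^{(J)})\otimes_H x$ for a scalar $b$, and $\alpha = b(C-(12)C)$ --- this is case~(iii), with $C = D = 0$ recovering case~(i) again (the direct sum of the simple $Hu$ with the abelian $Hx$).

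The hard part will be Steps two and three: extracting sharp degree bounds from the PC-identities and proving that nothing beyond the three listed normal forms occurs --- in particular that $\theta$ always vanishes and that no ``$\rho$-only'' family survives when $\alpha = 0$. Lemma~\ref{alem12} and the PBW-comparison technique of \cite{Wu} are what make this tractable; the remainder is the routine but lengthy bookkeeping needed to normalize the surviving constants into the stated presentations.
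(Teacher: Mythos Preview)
Your overall plan---encode the structure via Theorem~\ref{QTPC}, bound PBW degrees, then case-split---matches the paper's, but your order of deductions has a genuine gap that the paper avoids by organising the cases differently. You invoke \cite[Corollary~3.1]{Wu} to constrain $\alpha$ \emph{before} establishing $\theta=0$; but that corollary classifies rank-one \emph{Lie} pseudoalgebras and so presupposes $[\pi,\pi]_{NR}=0$, which by PC2 holds only once $\eta\circledcirc\theta=0$. You then use the resulting shape of $\alpha$ in PC3 to kill $\theta$---closing a circle. A second gap: in your $\alpha=0$ branch you assert that $\tfrac12[\rho,\rho]_{NR}=0$ together with a degree bound forces $\rho=0$, but this is not so (e.g.\ $\rho(u\otimes x)=c(1\otimes1)\otimes_H x$ survives); eliminating $\rho$ genuinely requires its interaction with $\pi$ or with the vanishing of $\eta$, not PC5 alone.

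The paper breaks the circularity by splitting first on $\eta$ (their $C$), not on $\pi$. When $C=0$, the term $\eta\circledcirc\theta$ in PC2 vanishes immediately, so $[\pi,\pi]_{NR}=0$ and \cite[Corollary~3.1]{Wu} applies cleanly; the remaining identities then kill $B$ and $D$. When $C\ne0$, PC6---which involves only $\eta$ and the vanishing $\mu$---forces $C=\sum_J C_J\,1\otimes a^{(J)}$ by a top-degree comparison, after which PC7 gives $B=bC$, and a secondary split on $b=0$ versus $b\ne0$ (using PC4 and PC2 in the first sub-case, PC5 and PC3 in the second) determines $A$ and kills $D$ separately in each. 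So $\theta=0$ is not established once up front but three times, each time from a different combination of PC-identities; reorganising your argument around the $\eta$-split would repair both gaps.
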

\begin{proof}
Assume that $\pi(u\otimes u)=A\otimes_Hu=\sum\limits_{I,J}A_{I,J}a^{(I)}\otimes a^{(J)}\otimes_Hu$, $\rho(u\otimes x)=B\otimes_Hx=\sum\limits_{I,J}B_{I,J}a^{(I)}\otimes a^{(J)}\otimes_Hx$, 
$\eta(u\otimes x)=C\otimes_Hu=\sum\limits_{i,J}C_{I,J}a^{(I)}\otimes a^{(J)}\otimes_Hu$ and $\theta(u\otimes u)=D\otimes x=\sum\limits_{I,J}D_{I,J}a^{(I)}\otimes a^{(J)}\otimes_Hx$ for some $A_{I,J}, B_{I,J},C_{I,J},D_{I,J}\in \mathbf{k}.$ To classify the rank two quasi-twilled Lie pseudoalgebras, we must determine all possible coefficients $A_{I,J},B_{I,J},C_{I,J},D_{I,J}\in\mathbf{k}$ such that the compatibility conditions $(PC2)-(PC7)$ hold.\\
\noindent \textbf{Case 1:}
Suppose that $B=0,C=0$ and $D=0$. Then $\mathfrak{G}=Hu\oplus Hx$ is a direct sum of two pseudoalgebras of rank one. This corresponds to Type (i).

 Next, we assume that either $B\neq 0$, $C\neq 0$, or $D\neq 0$.\\ \noindent
\noindent \textbf{Case2:} Assume that $C\neq 0$. Since $Hx$ is an abelian Lie pseudoalgebra of rank one, $[x*x]=0$. Condition (PC6) implies that 
\begin{eqnarray}\begin{array}{l}\label{eq13}\sum\limits_{K\neq I'}C_{I,J}C_{I'J'}a^{(I)}a^{(K)}\otimes (a^{(J)}a^{(I'-K)}\otimes a^{(J')}-a^{(J')}\otimes a^{(J)}a^{(I'-K)})=0.\end{array}\end{eqnarray} 
Let $n=max\{|J||C_{I,J}\neq 0\}$ and $m=max\{|I||C_{I,J}\neq 0\}$.
By comparing the third factors of tensor products in Eq. \eqref{eq13}, we find that $n+m=n$, which implies $m=0$. 
Thus, $C$ has no dependence on the first tensor factor and we say $C=\sum\limits_{J}C_J\otimes a^{(J)}$, where $C_J=C_{0,J}$. Now, from condition (PC7), we obtain $$\sum B_{I,J}C_{J'}a^{(K)}\otimes (a^{(J')}a^{(I-K)}\otimes a^{(J)}-a^{(J)}\otimes a^{(J')}a^{(I-K)})=0.$$
Consequently, we obtain $$\sum B_{I,J}C_{J'} (a^{(J')}a^{(I)}\otimes a^{(J)}-a^{(J)}\otimes a^{(J')}a^{(I)})=0.$$ 
Since $C\neq 0$, we get $B=t(\sum\limits_IB_{I,0}a^{(I)}\otimes 1)C$ for some $t\in\mathbf{k}$. In addition, $max\{|I||B_{I,J}\neq 0\}=0$. So $B=bC$ for some $b\in\mathbf{k}$.\\
 If $b=0$, then $B=0$. From Condition (PC4), w e get $$\sum A_{I,J}C_{J'}(a^{(I)}\otimes a^{(K)}-a^{(K)}\otimes a^{(I)})\otimes a^{(J')}a^{(J-K)}=\sum A_{I,J}C_{J'}a^{(I)}\otimes a^{(J)}\otimes a^{(J')}.$$ 
 Since $C\neq 0$, $$\sum A_{I,J}(a^{(I)}\otimes a^{(K)}-a^{(K)}\otimes a^{(I)})\otimes a^{(J-K)}=\sum A_{I,J}a^{(I)}\otimes a^{(J)}\otimes 1.$$ Hence $A=0$. then from this and (PC2), we derive that 
$$\sum D_{I,J}C_{J'}(1\otimes a^{(I)}a^{(K)}-a^{(I)}a^{(K)})\otimes a^{(J)}a^{(J'-K)}=-\sum D_{I,J}C_{J'}a^{(J)}a^{(J'-K)}\otimes 1\otimes a^{(I)}a^{(K)}.$$ Since the first two tensor factors are antisymmetric, $$\sum D_{I,J}C_{J'}a^{(I)}a^{(J'-K)}\otimes a^{(I)}a^{(J'-K)}=D(\sum C_{J'}\Delta(a^{(J)})=0.$$ This implies $D=0$. Thus, $\mathfrak{G}$ is isomorphic to Type (ii).\\
\noindent \textbf{Case 3:}
Now we assume that $\alpha=0$, $B\neq 0$ and $C\neq 0$. From (PC5) we have \begin{align*}
 \sum \limits_{K\neq 0}b^2C_JC_{J'}(1\otimes a^{(K)}&-a^{(K)}\otimes 1)\otimes a^{(J')}a^{(J-K)}+\sum D_{I,J}C_{J'}(a^{(I)}\otimes a^{(K)}-a^{(K)}\otimes a^{(I)})\otimes a^{(J')}a^{(J-K)}\\&=\sum bA_{I,J}C_{J'}a^{(I)}\otimes a^{(J)}\otimes a^{(J')}.
\end{align*}
Thus \begin{align*}
 \sum \limits_{K\neq 0}b^2C_J(1\otimes a^{(K)}&-a^{(K)}\otimes 1)\otimes a^{(J-K)}+\sum D_{I,J}(a^{(I)}\otimes a^{(K)}-a^{(K)}\otimes a^{(I)})\otimes a^{(J-K)}\\&=\sum bA_{I,J}a^{(I)}\otimes a^{(J)}\otimes 1.
\end{align*} Applying the functor $id\otimes id\otimes \varepsilon$ to the previous equation, we get that $$\sum b^2C_J(1\otimes a^{(J)}-a^{(J)}\otimes 1)+\sum D_{I,J}(a^{(I)}\otimes a^{(J)}-a^{(J)}\otimes a^{(I)})=\sum bA_{I,J}a^{(I)}\otimes a^{(J)}.$$ Hence $A=B-(12)B+\frac1b(D-(12)D)$.

\noindent From (PC3) we get that \begin{align*}
 &\sum bD_{I,J}C_{J'}(1\otimes a^{(I)}a^{(K)}
-a^{(I)}a^{(K)}\otimes 1)\otimes a^{(J)}a^{(J'-K)}+\sum b D_{I,J}C_{J'}a^{(J)}a^{(K)}\otimes 1\otimes a^{(I)}a^{(J'-K)}\\=&\sum A_{I,J}D_{I',J'}(a^{(I)}a^{(K)}\otimes a^{(I')}-a^{(I')}\otimes a^{(I)}a^{(K)})\otimes a^{(J)}a^{(J'-K)}\\&+\sum A_{I,J}D_{I',J'}a^{(I)}a^{(K)}\otimes a^{(J)}a^{(I'-K)}\otimes a^{(J')}.
\end{align*}Thus $$\sum D_{I,J}C_{J'}a^{(J)}\otimes 1\otimes a^{(I)}a^{(J')}=-\sum D_{I,J}C_{J'}\otimes a^{(J)}\otimes a^{(I)}a^{(J')}.$$
Applying the functor $id\otimes id\otimes \varepsilon$, we get $$\sum D_{I,J}a^{(J)}\otimes 1\otimes a^{(I)}=-\sum D_{I,J}\otimes a^{(J)}\otimes a^{(I)}.$$ Therefore $D=0$. With $D=0$, we have $$A=\sum\limits_{i=1}^{r'}(a'_i\otimes a'_{r'+1}-a'_{i+r'}\otimes a')+s'\otimes 1-1\otimes s'.$$ Thus $\mathfrak{G}$ is isomorphic to Type (iii).\\
\noindent \textbf{Case 4:}
Suppose that $C=0$. Then $A$ is either equal to zero or of the form $$\alpha'=\sum\limits_{i=1}^{r'}(a'_i\otimes a_{i+r'}'-a'_{r'+i}\otimes a'_i)+s'\otimes 1-1\otimes s':=\sum\limits_{I}\alpha'_I\otimes a^{(I)}$$ for some $a_j'(1\leq j\leq 2r'), s'\in\mathfrak{b}$. From (PC7), we get that
\begin{eqnarray}\label{eq14}\begin{array}{l}\sum\limits_K B_{I,J}a^{(I)}a^{(K)}\otimes(\alpha_{I'}\otimes a^{(J)}a^{(I'-K)}-a^{(J)}a^{(I'-K)}\otimes \alpha_{I'})=0.\end{array}\end{eqnarray} Considering the first factors in the tensor products in Eq. \eqref{eq14}, we see that $B=0$. Then, from (PC5) we get $D=0$. This case has already been eliminated.
 \end{proof}
We now present several key examples that illustrate the generality and applicability of quasi-twilled Lie pseudoalgebras.\\
Let $(\mathfrak{g}, [\cdot * \cdot]_{\mathfrak{g}})$ be a Lie pseudoalgebra. For $p \in \mathbb{C}$, define a bracket operation $[\cdot* \cdot]_M$ on $\mathfrak{g} \boxplus \mathfrak{g}$ by
\begin{align}\label{eq2222}
 [(x, u)*(y, v)]_M = ([x * v]_{\mathfrak{g}} -((12)\otimes_Hid) [y* u]_{\mathfrak{g}},\ p [x* y]_{\mathfrak{g}} + [u* v]_{\mathfrak{g}}), \quad \forall x, y, u, v \in \mathfrak{g}. \end{align}
That is, $\pi= \rho= 0$, $\eta = \mu = [\cdot*\cdot]_{\mathfrak{g}}$, and $\theta = p{[\cdot* \cdot]}_{\mathfrak{g}}$ in the above decomposition. Then $[\cdot*\cdot]_M$ is a Lie pseudoalgebra structure on $\mathfrak{g} \boxplus \mathfrak{g}$. Denote this Lie pseudoalgebra by $ \mathfrak{g} \oplus_M \mathfrak{g}=\Big(\mathfrak{g} \boxplus \mathfrak{g}, [\cdot*\cdot]_M \Big).$ 
\begin{ex} \label{ggg}
Let $(\mathfrak{g}, {[\cdot*\cdot]}_{\mathfrak{g}})$ be a Lie pseudoalgebra. Then $(\mathfrak{g} \oplus_M \mathfrak{g}, \mathfrak{g}, \mathfrak{g})$ is a quasi-twilled Lie pseudoalgebra.
\end{ex}
Let $\rho : \mathfrak{g} \otimes \mathfrak{h}\to H^{\otimes 2}\otimes_H\mathfrak{h}$ be an action of a Lie pseudoalgebra $\mathfrak{g}$ on a Lie pseudoalgebra $\mathfrak{h}$. For any $p \in \mathbf{k}$, then $(\mathfrak{g} \boxplus \mathfrak{h}, [\cdot*\cdot]_\rho)$ is a Lie pseudoalgebra, where the Lie pseudobracket $[\cdot* \cdot]_\rho$ is given by
\begin{align}
 [(x, u)* (y, v)]_\rho = ({[x*y]}_{\mathfrak{g}}, \rho(x\otimes v) -((12)\otimes_Hid) \rho(y\otimes u )+ p [u*v]_{\mathfrak{h}}), \quad \forall x, y \in \mathfrak{g}, u, v \in \mathfrak{h}.
\end{align}
This Lie pseudoalgebra is denoted by $\mathfrak{g} \ltimes_\rho \mathfrak{h}$ and is called the action Lie pseudoalgebra.
\begin{ex}\label{actionLie}
Let $\rho: \mathfrak{g} \otimes \mathfrak{h}\to H^{\otimes 2}\otimes_H\mathfrak{h}$ be an action of a Lie pseudoalgebra $\mathfrak{g}$ on a Lie pseudoalgebra $\mathfrak{h}$. Then $(\mathfrak{g} \ltimes_\rho \mathfrak{h}, \mathfrak{g}, \mathfrak{h})$ is a quasi-twilled Lie pseudoalgebra.
\end{ex}
\begin{ex}\label{SEMDIRPRODLIEALG}
Let $(M,\rho)$ be a representation of the Lie pseudoalgebra $\mathfrak{g}$. Then $\mathfrak{g}\boxplus M$ is a direct sum of left $H$-modules, which carries the structure of Lie pseudoalgebra by the pseudobracket given by \begin{align}
 [(x, u)* (y, v)]_\rho = ([x*y]_{\mathfrak{g}}, \rho(x)*v - ((12)\otimes_Hid)\rho(y)*u), \quad \forall u, v \in M, x, y \in \mathfrak{g}.
\end{align}
This Lie pseudoalgebra is denoted by $\mathfrak{g} \ltimes_\rho M$, which is called the semidirect product of Lie pseudoalgebra. The corresponding quasi-twilled Lie pseudoalgebra is given by $(\mathfrak{g} \ltimes_\rho M, \mathfrak{g}, M)$.
\end{ex}
Let $(\mathfrak{g}, [\cdot*\cdot]_{\mathfrak{g}})$ and $(\mathfrak{h}, [\cdot*\cdot]_{\mathfrak{h}})$ be two Lie pseudoalgebras. Then there is the direct product of Lie pseudoalgebras given by \begin{align}
 {[(x, u)*(y, v)]}_\oplus= ([x* y]_{\mathfrak{g}}, [u*v]_{\mathfrak{h}}), \quad \forall x, y \in \mathfrak{g}, u, v \in \mathfrak{h}.
\end{align} is a Lie pseudoalgebra denoted by $\mathfrak{g} \oplus_\oplus \mathfrak{h}= (\mathfrak{g} \boxplus \mathfrak{h}, {[\cdot* \cdot]}_\oplus)$.
 \begin{ex}\label{directprod}
The direct product Lie pseudoalgebra is also a quasi-twilled Lie pseudoalgebra denoted by $(\mathfrak{g} \oplus_\oplus\mathfrak{h}, \mathfrak{g}, \mathfrak{h})$.
\end{ex}
\begin{ex}\label{excocycle}
Let $(M,\rho)$ be a representation of the Lie pseudoalgebra $\mathfrak{g}$ and $\omega \in C^2(\mathfrak{g}, M)$ be a $2$-cocycle. Then $(\mathfrak{g} \ltimes_\rho^\omega M, \mathfrak{g}, M)$ is a quasi-twilled Lie pseudoalgebra, where $\mathfrak{g} \ltimes_\rho^\omega M$ is the semidirect product Lie pseudoalgebra with the pseudobracket given by
\begin{align}
 [(x, u)*(y, v)]_{\rho}^{\omega} = ([x*y]_{\mathfrak{g}}, \rho(x)*v -((12)\otimes_Hid)\rho(y)* u + \omega(x, y)), \quad \forall x, y \in \mathfrak{g}, u, v \in M.
\end{align}
\end{ex} 
\begin{ex}\label{exex}
As a special case of the previous example, consider $M= \mathfrak{g}$, $\rho = \mathrm{ad}$, and $\omega (x, y) = [x*y]_{\mathfrak{g}}$. Then we obtain a quasi-twilled Lie pseudoalgebra $(\mathfrak{g} \ltimes_{\mathrm{ad}}^\omega \mathfrak{g}, \mathfrak{g}, \mathfrak{g})$.
\end{ex}
\begin{rem}
The above examples can be unified via extensions of Lie pseudoalgebras. Recall that a Lie pseudoalgebra $\mathfrak{G}$ is an extension of a Lie pseudoalgebra $\mathfrak{g}$ by a Lie pseudoalgebra $\mathfrak{h}$, if we have the exact sequence 
\begin{align*}
 0 \longrightarrow \mathfrak{h} \xrightarrow{inc} \mathfrak{G} \xrightarrow{ proj} \mathfrak{g} \longrightarrow 0.
\end{align*}
If there is a section $s : \mathfrak{g} \to \mathfrak{G}$ of the map $proj$, then $\mathfrak{G}$ is equal to $s(\mathfrak{g}) \boxplus inc(\mathfrak{h})$, and $inc(\mathfrak{h})$ is a Lie subpseudoalgebra. Thus, $(\mathfrak{G}, s(\mathfrak{g}), inc(\mathfrak{h}))$ is a quasi-twilled Lie pseudoalgebra.
\end{rem}
 \begin{defn}A matched pair of Lie pseudoalgebras consists of a pair of Lie pseudoalgebras $(\mathfrak{g}, \mathfrak{h})$, an $H^{\otimes 2}$-linear map $\rho : \mathfrak{g} \otimes \mathfrak{h}\to H^{\otimes 2}\otimes_H \mathfrak{h}$ and $H^{\otimes 2}$-linear map $\eta:\mathfrak{h} \otimes \mathfrak{g} \to H^{\otimes 2}\otimes_H\mathfrak{g}$ such that the following identities hold 
\begin{align*}
\rho(x\otimes [u*v]_{\mathfrak{h}})&= [\rho(x\otimes u)* v]_{\mathfrak{h}}+((12)\otimes_Hid)([u*\rho(x\otimes v)]_{\mathfrak{h}} - \rho(\eta(u\otimes x)\otimes v))\\
&\qquad + ((123)\otimes_Hid)\rho(\eta(v\otimes x)*u), \\
\eta(u\otimes [x* y]_{\mathfrak{g}}) &= [\eta(u\otimes x)* y]_{\mathfrak{g}} + ((12)\otimes _Hid)([x* \eta(u\otimes y)]_{\mathfrak{g}}-\eta(\rho(x\otimes u)\otimes y)) \\
&\qquad +((123)\otimes_Hid) \eta(\rho(y\otimes u)\otimes x),
\end{align*}
for all $x, y \in \mathfrak{g}$ and $u, v \in \mathfrak{h}$. We will denote a matched pair of Lie pseudoalgebras by $(\mathfrak{g}, \mathfrak{h}; \rho, \eta)$, or simply by $(\mathfrak{g}, \mathfrak{h})$. 
 \end{defn}
\begin{defn}
 Let $(\mathfrak{g}, \mathfrak{h}; \rho, \eta)$ be a matched pair of Lie pseudoalgebras. Then there is a Lie pseudoalgebra structure on the direct sum of left $H$-modules, $\mathfrak{g} \boxplus \mathfrak{h}$ with the pseudobracket $[\cdot*\cdot]_{\bowtie}$ given~by
\begin{align}
\begin{array}{lll} [(x, u)*(y, v)]_{\bowtie}&= \Big([x*y]_{\mathfrak{g}} + \eta(u\otimes y )
- ((12)\otimes_Hid)\eta(v\otimes x),\ [u* v]_{\mathfrak{h}} \\ & \qquad\qquad + \rho(x\otimes v) -((12)\otimes_Hid) \rho(y\otimes u)\Big).\end{array}
\end{align}
Denote this Lie pseudoalgebra by $\mathfrak{g} \bowtie \mathfrak{h}=\Big(\mathfrak{g} \boxplus \mathfrak{h},\ [\cdot*\cdot]_{\bowtie}\Big)$. 
\end{defn}
\begin{ex}\label{exmax}
Let $(\mathfrak{g}, \mathfrak{h}; \rho, \eta)$ be a matched pair of Lie pseudoalgebras. Then $(\mathfrak{g} \bowtie \mathfrak{h}, \mathfrak{g}, \mathfrak{h})$ is a quasi-twilled Lie pseudoalgebra. Note that $H^{\otimes 2}$-linear maps $\eta$ and $\rho$ in the definition of a matched pair are related by the following relations \begin{align*}
 \rho(x)*v = \rho(x\otimes v),\qquad \eta(u)*(y) = -((12)\otimes_Hid)\eta(y\otimes u).
\end{align*}
\end{ex}
\begin{rem}
In Section $3$, we introduce the notion of a deformation map of type I ($\mathfrak{D}$-map) of a quasi-twilled pseudoalgebra over a cocommutative Hopf algebra $ H $. We show that $\mathfrak{D}$-maps of the quasi-twilled pseudoalgebras given in Examples~\ref{ggg}, \ref{actionLie}, \ref{SEMDIRPRODLIEALG}, and~\ref{directprod} unify several structures, including modified r-matrices (solutions of the modified classical Yang-Baxter equation), crossed homomorphisms of weight $p$, derivations between pseudoalgebras, and homomorphisms of pseudoalgebras.
\end{rem}
\begin{rem}
In Section 4, we define the notion of a deformation map of type II ($\mathcal{D}$-map) of a quasi-twilled pseudoalgebra over $ H $. We show that $\mathcal{D}$-maps of the quasi-twilled pseudoalgebras given in Examples ~\ref{actionLie}, \ref{SEMDIRPRODLIEALG}, \ref{excocycle}, \ref{exex}, and~\ref{exmax} unify several structures, including 
 weighted relative Rota-Baxter operators, twisted Rota-Baxter operators, Reynolds operators,
 and deformation maps of matched pairs of pseudoalgebras.
\end{rem}
\section{ The controlling algebras and cohomologies of deformation maps of type I}
In this section, $(\mathfrak{G}, \mathfrak{g}, \mathfrak{h})$ is a quasi-twilled Lie pseudoalgebra with a pseudobracket on $\mathfrak{G}$, which is denoted by
$\Omega = \pi + \rho + \mu + \eta + \theta,$
where $\pi$, $\rho $, $\mu $, $\eta$, and $\theta$ are same as defined in Section 2.
\subsection{ Deformation maps of type I of a quasi-twilled Lie pseudoalgebra}
In this subsection, we introduce the notion of deformation maps of type I of a quasi-twilled Lie pseudoalgebra, which unify modified $r$-matrices, crossed homomorphisms, derivations, and homomorphisms between Lie pseudoalgebras.
\begin{defn}
Let $(\mathfrak{G}, \mathfrak{g}, \mathfrak{h})$ be a quasi-twilled Lie pseudoalgebra. A deformation map of type I ($\mathfrak{D}$-map for short) of $(\mathfrak{G}, \mathfrak{g}, \mathfrak{h})$ is a left $H$-module homomorphism $D:\mathfrak{g} \to \mathfrak{h}$ such that
\begin{align*}
& D (\pi(x\otimes y)+\eta(x \otimes D(y)) -((12)\otimes _Hid) \eta(y \otimes D(x)) \\&\quad= \mu (D(x)\otimes D(y)) + \rho(x\otimes D(y)) -((12)\otimes_Hid) \rho(y\otimes D(x)) + \theta(x\otimes y).\nonumber
\end{align*} 
\end{defn}
\begin{rem}
$\mathfrak{D}$-maps may not always exist. Consider the quasi-twilled Lie pseudoalgebra $(\mathfrak{g} \ltimes_\rho^\omega M, \mathfrak{g}, M)$ introduced in Example \eqref{excocycle}. A left $H$-module homomorphism $D: \mathfrak{g} \to \mathfrak{h}$ is a $\mathfrak{D}$-map if and only if
\begin{align*}
 \omega(x\otimes y) = -\left(\rho(x\otimes D(y)) - ((12)\otimes_Hid)\rho(y\otimes D(x)) - D([x*y]_{\mathfrak{g}})\right) = -d_{\mathrm{CE}}(D) (x\otimes y), \quad \forall x, y \in \mathfrak{g},\end{align*}
 where $d_{\mathrm{CE}}$ denotes the corresponding Chevalley-Eilenberg coboundary operator of the Lie pseudoalgebra $\mathfrak{g}$ with coefficients in the representation $(M, \rho)$. Thus, the quasi-twilled Lie pseudoalgebra $(\mathfrak{g} \ltimes_\rho^\omega M, \mathfrak{g}, M)$ admits a $\mathfrak{D}$-map if and only if $\omega$ is an exact $2$-cocycle.
\end{rem}
\noindent Let $D:\mathfrak{g} \to \mathfrak{h}$ be a homomorphism of the left $H$ module. Define the graph of $D$ as\begin{align*}
 \mathrm{Gr}(D) = \{(x, D(x)) \mid x \in \mathfrak{g}\}.
\end{align*}
\begin{prop}\label{PROPGRAPH}
A homomorphism of the left $H$-module $D : \mathfrak{g} \to \mathfrak{h}$ is a $\mathfrak{D}$ map if and only if $\mathrm{Gr}(D)$ is a subpseudoalgebra. In this case, $(\mathfrak{h}, \mathrm{Gr}(D))$ forms a matched pair of Lie pseudoalgebras.
\end{prop}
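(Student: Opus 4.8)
The plan is to verify directly that the subspace $\mathrm{Gr}(D) \subseteq \mathfrak{G} = \mathfrak{g} \boxplus \mathfrak{h}$ is closed under the pseudobracket $\Omega$ of Equation~\eqref{eq:quasi-mult} if and only if the defining identity of a $\mathfrak{D}$-map holds. First I would compute $\Omega\big((x, D(x)), (y, D(y))\big)$ using the explicit formula for $\Omega$, substituting $u = D(x)$ and $v = D(y)$. The $\mathfrak{g}$-component comes out as $\pi(x\otimes y) + \eta(x\otimes D(y)) - ((12)\otimes_H\mathrm{id})\eta(y\otimes D(x))$, call it $Z_1$, and the $\mathfrak{h}$-component is $[D(x)*D(y)]_\mathfrak{h} + \rho(x\otimes D(y)) - ((12)\otimes_H\mathrm{id})\rho(y\otimes D(x)) + \theta(x\otimes y)$, call it $Z_2$. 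Since $D$ is an $H$-module homomorphism, $\mathrm{Gr}(D)$ is an $H$-submodule, and an element $(a,b) \in \mathfrak{G}$ lies in $\mathrm{Gr}(D)$ precisely when $b = D(a)$; hence $\Omega\big((x,D(x)),(y,D(y))\big) \in \mathrm{Gr}(D)$ if and only if $Z_2 = D(Z_1)$. Unwinding, $Z_2 = D(Z_1)$ is exactly the $\mathfrak{D}$-map equation, which proves the equivalence. One should also note that $\mathrm{Gr}(D)$ is automatically closed under the $H$-action and is a left $H$-submodule isomorphic to $\mathfrak{g}$ via the projection, so "subpseudoalgebra" here means precisely closure under $\Omega$ together with inheriting skew-symmetry and Jacobi, both of which are inherited from $\mathfrak{G}$.

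For the second assertion, suppose $D$ is a $\mathfrak{D}$-map. Then $\mathrm{Gr}(D)$ is a Lie subpseudoalgebra of $\mathfrak{G}$, and since $\mathfrak{h}$ is also a Lie subpseudoalgebra of $\mathfrak{G}$ (by the quasi-twilled hypothesis) with $\mathfrak{G} = \mathrm{Gr}(D) \boxplus \mathfrak{h}$ as left $H$-modules — indeed any $(a,b)\in\mathfrak{G}$ decomposes uniquely as $(a, D(a)) + (0, b - D(a))$ — the triple $(\mathfrak{G}, \mathrm{Gr}(D), \mathfrak{h})$ is a twilled Lie pseudoalgebra. The plan is then to invoke the standard correspondence between twilled Lie pseudoalgebras and matched pairs: a twilled structure $\mathfrak{G} = \mathfrak{a} \boxplus \mathfrak{b}$ with both $\mathfrak{a}$ and $\mathfrak{b}$ subpseudoalgebras yields a matched pair $(\mathfrak{a}, \mathfrak{b}; \rho', \eta')$ where $\rho'$ and $\eta'$ are extracted from the mixed brackets $\Omega\big((a,0),(0,b)\big)$ by projecting onto the two summands (cf.\ the relations displayed in Example~\ref{exmax}). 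Applying this with $\mathfrak{a} = \mathrm{Gr}(D) \cong \mathfrak{g}$ and $\mathfrak{b} = \mathfrak{h}$ gives the desired matched pair $(\mathfrak{h}, \mathrm{Gr}(D))$.

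The routine but slightly delicate part is bookkeeping the permutation-of-tensor-factor maps $((12)\otimes_H\mathrm{id})$ correctly when expanding $\Omega$ on graph elements, since $D$ must be pulled through these operators; this works because $D$ is $H$-linear, so $((12)\otimes_H\mathrm{id})$ commutes with applying $D$ in the appropriate tensor slot, but it deserves a line of justification. I expect the main conceptual point — rather than obstacle — to be recognizing that once $\mathrm{Gr}(D)$ is a subpseudoalgebra the decomposition $\mathfrak{G} = \mathrm{Gr}(D) \boxplus \mathfrak{h}$ is automatically a twilled (not merely quasi-twilled) decomposition, after which the matched-pair statement is immediate from the already-recalled dictionary between twilled Lie pseudoalgebras and matched pairs. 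No new computation beyond the first displayed identity is really needed.
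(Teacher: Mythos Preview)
Your proposal is correct and follows essentially the same approach as the paper: compute $\Omega$ on graph elements, observe that closure under $\Omega$ is equivalent to the $\mathfrak{D}$-map identity, and then use the twilled decomposition $\mathfrak{G} = \mathrm{Gr}(D) \boxplus \mathfrak{h}$ to conclude the matched-pair statement. The paper's version is terser (it asserts the matched-pair conclusion directly from the $H$-module decomposition without spelling out the twilled-to-matched-pair dictionary), but the argument is the same.
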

\begin{proof}
For all $(x, D(x)), (y, D(y)) \in \mathrm{Gr}(D)$, we have
\begin{align*}
 \Omega \big((x, D(x)), (y, D(y))\big) =& \Big(\pi(x\otimes y) + \eta(x \otimes D(y) ) -((12)\otimes_Hid) \eta(y\otimes D(x)), \mu(D(x)\otimes D(y)) \\&\qquad+ \rho(x \otimes D(y))- ((12)\otimes_Hid) \rho(y \otimes D(x)) + \theta(x\otimes y)\Big).
\end{align*}
Thus, $\mathrm{Gr}(D)$ is a Lie subpseudoalgebra of $\mathfrak{G}$, i.e., $ \Omega\big((x, D(x)), (y, D(y))\big) \in \mathrm{Gr}(D),$ if and only if the following identity holds:
\begin{align*} \mu(D(x)\otimes D(y)) +& \rho(x \otimes D(y)) - ((12)\otimes_Hid) \rho(y \otimes D(x)) + \theta(x\otimes y)\\
=& D (\pi(x\otimes y) + \eta(x \otimes D(y) )-((12)\otimes_Hid) \eta(y\otimes D(x))).
\end{align*}
This condition precisely characterizes that $D$ is a $\mathfrak{D}$-map of the quasi-twilled Lie pseudoalgebra $(\mathfrak{G}, \mathfrak{g}, \mathfrak{h})$. Since $\mathfrak{G} = \mathrm{Gr}(D) \boxplus \mathfrak{h}$, it follows that $(\mathfrak{h}, \mathrm{Gr}(D))$ is a matched pair of Lie pseudoalgebras.
\end{proof}
 \begin{rem}
Let $(\mathfrak{G}, \mathfrak{g}, \mathfrak{h})$ be a quasi-twilled Lie pseudoalgebra. Then we have $\mathfrak{G} = \mathfrak{g} \boxplus \mathfrak{h}$. Since any complement of $\mathfrak{h}$ in $\mathfrak{G}$, i.e., a submodule isomorphic to $\mathfrak{g}$, can be realized as the graph of a left $H$-module homomorphism $D:\mathfrak{g}\to \mathfrak{h}$, it follows from Proposition~\ref{PROPGRAPH} that finding a submodule $\mathfrak{g}'\subseteq \mathfrak{G}$ which is a complement of $\mathfrak{h}$ such that $(\mathfrak{h}, \mathfrak{g}')$ forms a matched pair of Lie pseudoalgebras is equivalent to finding a $\mathfrak{D}$-map for the quasi-twilled Lie pseudoalgebra $(\mathfrak{G}, \mathfrak{g}, \mathfrak{h})$.
\end{rem} 
\begin{ex}
Consider the quasi-twilled Lie pseudoalgebra $(\mathfrak{g} \oplus_M \mathfrak{g}, \mathfrak{g}, \mathfrak{g})$ defined in Example \ref{ggg}. In this case, a $\mathfrak{D}$-map of $(\mathfrak{g} \oplus_M \mathfrak{g}, \mathfrak{g}, \mathfrak{g})$ is a left $H$-linear map $D : \mathfrak{g} \to \mathfrak{g}$ satisfying the identity
\begin{align*} {[D(x)*D(y)]}_{\mathfrak{g}} = D({[D(x)* y]}_{\mathfrak{g}} + {[x*D(y)]}_{\mathfrak{g}}) - p {[x*y]}_{\mathfrak{g}},\quad \forall x,y \in \mathfrak{G}.
\end{align*}
 This equation is similar to \emph{modified Yang-Baxter equation}, whose solutions are referred to as \emph{modified $r$-matrices} on the Lie pseudoalgebra $(\mathfrak{g}, [\cdot*\cdot]_{\mathfrak{g}})$, \cite{BDK}. 
\end{ex}

\begin{ex}
Consider the quasi-twilled Lie pseudoalgebra $(\mathfrak{g} \ltimes_\rho \mathfrak{h}, \mathfrak{g}, \mathfrak{h})$, as introduced in Example \ref{actionLie}, which arises from the action Lie pseudoalgebra $\mathfrak{g} \ltimes_\rho \mathfrak{h}$. In this case, a $\mathfrak{D}$-map for $(\mathfrak{g} \ltimes_\rho \mathfrak{h}, \mathfrak{g}, \mathfrak{h})$ is a left $H$-module homomorphism $D : \mathfrak{g} \to \mathfrak{h}$ satisfying the identity:
\begin{align*}
 D({[x* y]}_{\mathfrak{g}}) = \rho(x \otimes D(y)) - ((12)\otimes_Hid) \rho(y \otimes D(x) )+ p{[D(x)* D(y)]}_{\mathfrak{h}},
\end{align*}
Such a map $D$ is called \emph{crossed homomorphism of weight $p$} from the Lie pseudoalgebra $\mathfrak{g}$ to the Lie pseudoalgebra $\mathfrak{h}$.
\end{ex}\noindent The analogous notion in the context of ordinary Lie algebras is known as a crossed homomorphism and was introduced in \cite{Lue}. If $\mathfrak{h}$ is commutative Lie pseudoalgebra, then any crossed homomorphism from $\mathfrak{g}$ to $\mathfrak{h}$ reduces to a derivation from $\mathfrak{g}$ to $\mathfrak{h}$ with respect to the representation $(\mathfrak{h}, \rho).$
\begin{rem}
 A crossed homomorphism from $\mathfrak{g}$ to itself, with respect to the adjoint representation, is also referred to as a \emph{differential operator of weight $p$} on the Lie pseudoalgebra $\mathfrak{g}$.
\end{rem}
\begin{ex}
 If the action $\rho$ of $\mathfrak{g}$ on $\mathfrak{h}$ is trivial, then any crossed homomorphism from $\mathfrak{g}$ to $\mathfrak{h}$ reduces to a Lie pseudoalgebra homomorphism from $\mathfrak{g}$ to $\mathfrak{h}$. If $\mathfrak{h}$ is commutative, then crossed homomorphism becomes precisely a derivation from $\mathfrak{g}$ to $\mathfrak{h}$ with respect to the representation $(\mathfrak{h}; \rho)$.
\end{ex}
\begin{ex}
Consider the quasi-twilled Lie pseudoalgebra $(\mathfrak{g} \ltimes_\rho M, \mathfrak{g}, M)$, as defined in Example~\ref{SEMDIRPRODLIEALG}, arising from the semidirect product Lie pseudoalgebra $\mathfrak{g} \ltimes_\rho M$. In this setting, a $\mathfrak{D}$-map is a left $H$-module homomorphism $D : \mathfrak{g} \to M$ satisfying the identity:
\begin{align*}
(id\otimes_H D)([x* y]_{\mathfrak{g}}) = \rho(x\otimes D(y)) - ((12)\otimes_Hid)\rho(y\otimes D(x)),
\end{align*}
This condition precisely states that $D$ is a \emph{derivation} from the Lie pseudoalgebra $(\mathfrak{g}, [\cdot* \cdot]_{\mathfrak{g}})$ to the representation $(M;\rho)$. In particular, if $\rho$ is the adjoint representation of $\mathfrak{g}$ on itself, then $D$ becomes an ordinary derivation of $\mathfrak{g}$.
\end{ex}
\begin{ex}
Consider the quasi-twilled Lie pseudoalgebra $(\mathfrak{g} \boxplus \mathfrak{h}, \mathfrak{g}, \mathfrak{h})$, introduced in Example \ref{directprod}, which arises from the direct product Lie pseudoalgebra structure on $\mathfrak{g} \boxplus \mathfrak{h}$. In this case, a $\mathfrak{D}$-map of $(\mathfrak{g} \boxplus \mathfrak{h}, \mathfrak{g}, \mathfrak{h})$ is a left $H$-module homomorphism $D : \mathfrak{g} \to \mathfrak{h}$ satisfying the identity:
 \begin{align*}
(id\otimes_H D)([x* y]_{\mathfrak{g}}) = [D(x)* D(y)]_{\mathfrak{h}}.
\end{align*} 
Such a map $D$ is exactly a \emph{homomorphism of Lie pseudoalgebras} from $(\mathfrak{g}, [\cdot* \cdot]_{\mathfrak{g}})$ to $(\mathfrak{h}, [\cdot*\cdot]_{\mathfrak{h}})$.
\end{ex}
 At the end of this subsection, we illustrate the role of $\mathfrak{D}$-maps in twist theory. 

\noindent Let $D : \mathfrak{g} \to \mathfrak{h}$ be a left $H$-module map. Suppose that $D^2 = 0$ and define $X_D=[\cdot, D]_{\mathrm{NR}}$. Then $X_D$ is a derivation of the graded Lie pseudoalgebra
\begin{align*}
 \left(\bigoplus_{n=0}^{\infty} C^{n+1}(\mathfrak{g} \boxplus \mathfrak{h}, \mathfrak{g} \boxplus \mathfrak{h}), [\cdot, \cdot]_{\mathrm{NR}}\right).
\end{align*} It follows that the exponential map $$e^{[\cdot, D]_{\mathrm{NR}}}= e^{X_D}:=id+X_D+\frac{X_D^2}{2!}+\frac{X_D^3}{3!}+\cdots,$$ is an automorphism of the above-mentioned graded Lie pseudoalgebra. Moreover, $X_D^2$ can be defined as $[[\cdot, D]_{NR}, D]_{NR}$, and similarly for higher powers when $n\geq 3$. 

\begin{defn}
Consider that $D : \mathfrak{g} \to \mathfrak{h}$ be a left $H$-module homomorphism. The transformation $\Omega^D := e^{X_D} \Omega $ is called the twisting of $\Omega$ by $D$.
\end{defn}
\begin{prop}\label{proptransformation}
Similar to \cite[Lemma 4.2 and Proposition 4.3]{YL}, we have the identity:
\begin{align*}
 \Omega ^{D}= e^{-D} \circ \Omega \circ (e^D \otimes e^D),
\end{align*}
which defines a Lie pseudoalgebra structure on $\mathfrak{g} \boxplus \mathfrak{h}$. Furthermore, the map $e^D : (\mathfrak{G}, \Omega^D) \to (\mathfrak{G}, \Omega)$ is an isomorphism between Lie pseudoalgebras.
\end{prop}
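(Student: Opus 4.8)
The plan is to recognize the twisting $\Omega^{D}$ as the transport of the pseudobracket $\Omega$ along a gauge automorphism of $\mathfrak{G}$, and then to check that this agrees with the Lie-theoretic prescription $e^{X_D}\Omega$ by an infinitesimal one-parameter argument.

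\textbf{Setup.} First I would extend $D$ by zero on $\mathfrak{h}$, so that $D\colon\mathfrak{G}\to\mathfrak{G}$ satisfies $D^{2}=0$; then $e^{\pm D}=\mathrm{id}_{\mathfrak{G}}\pm D$ are left $H$-module homomorphisms which are mutually inverse automorphisms of $\mathfrak{G}$, and $e^{\pm tD}\circ D=D=D\circ e^{\pm tD}$ for every scalar $t$. Since $D$ is homogeneous of bidegree $1|{-1}$ (it carries one copy of $\mathfrak{g}$ into $\mathfrak{h}$ and annihilates $\mathfrak{h}$), the operator $X_{D}=[\cdot,D]_{\mathrm{NR}}$ lowers the second component of the bidegree by $1$ by the bidegree-compatibility lemma, and by the vanishing lemma for cochains whose second bidegree is $-1$ it is locally nilpotent on every homogeneous cochain. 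Hence $e^{X_{D}}$ is a genuine automorphism of $(C^{*}(\mathfrak{G},\mathfrak{G}),[\cdot,\cdot]_{\mathrm{NR}})$, and $e^{X_{D}}\Omega=\Omega+X_{D}\Omega+\tfrac12 X_{D}^{2}\Omega$ is a finite sum (indeed $X_{D}^{3}\Omega=0$).

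\textbf{The key identity.} I would prove $\Omega^{D}=(\mathrm{id}_{H^{\otimes 2}}\otimes_{H}e^{-D})\circ\Omega\circ(e^{D}\otimes e^{D})$ by a deformation argument. Put $L(t)=e^{tX_{D}}\Omega$ and $R(t)=(\mathrm{id}_{H^{\otimes 2}}\otimes_{H}e^{-tD})\circ\Omega\circ(e^{tD}\otimes e^{tD})$; both are polynomials in $t$ with $L(0)=R(0)=\Omega$. By definition $L'(t)=X_{D}(L(t))=[L(t),D]_{\mathrm{NR}}$. Unwinding the circle product \eqref{circleproduct} and using the skew-symmetry \eqref{eq11} of a bilinear cochain $B$ (to fuse the two $(1,1)$-shuffle terms), one obtains the pointwise formula
\begin{align*}
[B,D]_{\mathrm{NR}}(x\otimes y)=B(D(x)\otimes y)+B(x\otimes D(y))-(\mathrm{id}_{H^{\otimes 2}}\otimes_{H}D)\,B(x\otimes y).
\end{align*}
On the other hand, differentiating $R(t)$ by the product rule and repeatedly using $e^{\pm tD}\circ D=D=D\circ e^{\pm tD}$ together with $D^{2}=0$ gives exactly $R'(t)(x\otimes y)=R(t)(D(x)\otimes y)+R(t)(x\otimes D(y))-(\mathrm{id}_{H^{\otimes 2}}\otimes_{H}D)R(t)(x\otimes y)=[R(t),D]_{\mathrm{NR}}(x\otimes y)$. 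Thus $L$ and $R$ solve the same linear ODE with the same initial value, so $L\equiv R$; evaluating at $t=1$ yields the claimed identity. Conceptually this is just the fact that the integrated adjoint action $e^{-\mathrm{ad}_{D}}=e^{X_{D}}$ on the Nijenhuis--Richardson algebra is the change of variables $f\mapsto(\mathrm{id}\otimes_{H}e^{-D})\circ f\circ(e^{D})^{\otimes\bullet}$, the ODE being the concrete way to verify this over $H$.

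\textbf{Conclusion.} Granting the identity, $\Omega^{D}$ is the transport of the pseudobracket $\Omega$ along the $H$-module isomorphism $e^{D}$. Because $e^{\pm D}$ are $H$-linear, this transport preserves $H^{\otimes 2}$-linearity and conformality, and transport along an isomorphism preserves skew-symmetry and the Jacobi identity; hence $\Omega^{D}$ is a Lie pseudoalgebra structure on $\mathfrak{g}\boxplus\mathfrak{h}$. (Alternatively, $[\Omega^{D},\Omega^{D}]_{\mathrm{NR}}=e^{X_{D}}[\Omega,\Omega]_{\mathrm{NR}}=0$ since $e^{X_{D}}$ is a graded Lie algebra automorphism and $\Omega$ is Maurer--Cartan.) Finally, composing $\Omega^{D}=(\mathrm{id}_{H^{\otimes 2}}\otimes_{H}e^{-D})\circ\Omega\circ(e^{D}\otimes e^{D})$ with $\mathrm{id}_{H^{\otimes 2}}\otimes_{H}e^{D}$ on the left and using $e^{D}\circ e^{-D}=\mathrm{id}$ gives $(\mathrm{id}_{H^{\otimes 2}}\otimes_{H}e^{D})\circ\Omega^{D}=\Omega\circ(e^{D}\otimes e^{D})$, which is precisely the statement that $e^{D}\colon(\mathfrak{G},\Omega^{D})\to(\mathfrak{G},\Omega)$ is a homomorphism of Lie pseudoalgebras; being bijective, it is an isomorphism. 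The only non-formal point — the main obstacle — is establishing the pointwise $NR$-bracket formula (and, in parallel, the product-rule expansion of $R'(t)$) rigorously in the pseudoalgebra context: one must keep careful track of the $(p,q)$-shuffle sums and signs $(-1)^{\sigma}$ in \eqref{circleproduct}, the symmetric-group action $\sigma\otimes_{H}\mathrm{id}$ on the $H^{\otimes 2}$ factor, and the fact that $\Omega$ lands in $H^{\otimes 2}\otimes_{H}\mathfrak{G}$ rather than in $\mathfrak{G}$, so that the $((12)\otimes_{H}\mathrm{id})$ terms produced by skew-symmetry cancel as expected; everything downstream of that identity is formal.
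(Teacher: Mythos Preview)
Your argument is correct. The paper does not actually supply a proof of this proposition: it states the result with the phrase ``Similar to \cite[Lemma 4.2 and Proposition 4.3]{YL}'' and moves on, so there is nothing to compare at the level of execution. Your one-parameter/ODE argument --- setting $L(t)=e^{tX_{D}}\Omega$ and $R(t)=(\mathrm{id}\otimes_{H}e^{-tD})\circ\Omega\circ(e^{tD}\otimes e^{tD})$, verifying both satisfy $F'(t)=[F(t),D]_{\mathrm{NR}}$ via the pointwise identity $[B,D]_{\mathrm{NR}}(x\otimes y)=B(Dx\otimes y)+B(x\otimes Dy)-(\mathrm{id}\otimes_{H}D)B(x\otimes y)$, and invoking uniqueness --- is the standard way to establish ``conjugation equals exponential of adjoint'' identities and adapts cleanly to the pseudoalgebra setting. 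Your bidegree/local-nilpotency justification for $e^{X_{D}}$ being a genuine automorphism, and your two alternative checks that $\Omega^{D}$ is a Lie pseudobracket (transport of structure versus $e^{X_{D}}[\Omega,\Omega]_{\mathrm{NR}}=0$), are both sound; the isomorphism claim then follows immediately from the conjugation formula as you wrote.
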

\begin{thm}\label{thm33}
Let $(\mathfrak{G}, \mathfrak{h}, \mathfrak{g})$ be a quasi-twilled Lie pseudoalgebra and $D : \mathfrak{g} \to \mathfrak{h}$ a left $H$-module homomorphism. Then $((\mathfrak{G}, \Omega^D), \mathfrak{g}, \mathfrak{h})$ is also a quasi-twilled Lie pseudoalgebra. Moreover, since $\Omega^D$ is a $2$-cochain, it can be decomposed into five substructures: $\Omega^D = \pi^D + \rho^D + \mu^D + \eta^D + \theta^D$, where 
 $\pi^D: \mathfrak{g}\otimes\mathfrak{g} \to H^{\otimes 2}\otimes_H\mathfrak{g}$, $\rho^D :\mathfrak{g} \otimes \mathfrak{h}\to H^{\otimes2}\otimes_H\mathfrak{h}$, $\mu^D: \mathfrak{h}\otimes \mathfrak{h} \to H^{\otimes2}\otimes_H\mathfrak{h}$, $\eta^D:\mathfrak{g} \otimes \mathfrak{h}\to H^{\otimes2}\otimes_H\mathfrak{g}$ and $\theta^D: \mathfrak{g}\otimes \mathfrak{g}\to H^{\otimes2}\otimes_H\mathfrak{h}$. The relation between $\Omega ^D$ and $\Omega$ are given explicitly as follows:
\begin{align*}\mu^D(u\otimes v) &= \mu(u\otimes v), \\
\eta^D(x\otimes v) &= 
 \eta(x\otimes v), \\
\pi^D(x\otimes y) &= \pi(x\otimes y) +\eta(x\otimes D(y)) - ((12)\otimes_Hid)\eta(y \otimes D(x)), \\
\rho^D(x\otimes v) &= \rho(x\otimes v) + \mu(D(x)\otimes v) -(id\otimes_H D)( \eta(x \otimes v)), \\
\theta^D(x\otimes y) &= \theta(x\otimes y) + \rho(x \otimes D(y))-((12)\otimes_Hid)\rho(y \otimes D(x)) -(id\otimes_H D)(\pi(x\otimes y)) \\
&\qquad+ \mu(D(x)\otimes D(y)) -(id\otimes_H D)(\eta(x\otimes D(y))) +((12)\otimes_HD)(\eta(y\otimes D(x))),
\end{align*}
for all $x, y \in \mathfrak{g}$, $u, v \in \mathfrak{h}$. Consequently, a left $H$-module homomorphism $D : \mathfrak{g} \to \mathfrak{h}$ is a $\mathfrak{D}$-map if and only if the Lie pseudoalgebras $(\mathfrak{g}, \pi^D)$ and $(\mathfrak{h}, \mu^D)$ form a matched pair of Lie pseudoalgebras.
\end{thm}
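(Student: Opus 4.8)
The plan is to compute the twisted bracket $\Omega^{D}$ explicitly from the closed formula of Proposition~\ref{proptransformation}, read off its five homogeneous components, and then recognize the vanishing of the $\theta$-component as the defining identity of a $\mathfrak{D}$-map. Extend $D$ to $\mathfrak{G}=\mathfrak{g}\boxplus\mathfrak{h}$ by zero on $\mathfrak{h}$; since $D(\mathfrak{g})\subseteq\mathfrak{h}$ and $D(\mathfrak{h})=0$ we have $D^{2}=0$, so Proposition~\ref{proptransformation} applies, $e^{\pm D}=\mathrm{id}\pm D$, and $e^{D}(x,u)=(x,u+D(x))$. Thus
$$\Omega^{D}\big((x,u),(y,v)\big)=\big(\mathrm{id}^{\otimes 2}\otimes_H(\mathrm{id}-D)\big)\,\Omega\big((x,u+D(x)),(y,v+D(y))\big),$$
where the outer factor leaves the $\mathfrak{g}$-component of $\Omega$ unchanged and subtracts $(\mathrm{id}^{\otimes 2}\otimes_H D)$ of it from the $\mathfrak{h}$-component. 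Substituting \eqref{eq:quasi-mult}, expanding by bilinearity, and sorting the resulting terms by their dependence on $u$ and $v$ — the term $[u\ast v]_{\mathfrak h}$ giving $\mu^{D}$, the terms linear in $v$ only (resp.\ $u$ only) giving $\eta^{D},\rho^{D}$, and the terms independent of $u,v$ giving $\pi^{D}$ in the $\mathfrak{g}$-slot and $\theta^{D}$ in the $\mathfrak{h}$-slot — reproduces the five formulas in the statement. Two small points make the bookkeeping go through: $(12)\otimes_H\mathrm{id}$ commutes with $\mathrm{id}^{\otimes 2}\otimes_H D$ because they act on disjoint tensor factors, and collecting the $u$-linear part uses the skew-symmetry (PC1) of $\mu$ to rewrite $-((12)\otimes_H\mathrm{id})\mu(D(y)\otimes u)$ as $[u\ast D(y)]_{\mathfrak h}$, so that $\rho^{D}$ appears consistently in both slots.

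By Proposition~\ref{proptransformation}, $\Omega^{D}$ is a Lie pseudoalgebra bracket on $\mathfrak{g}\boxplus\mathfrak{h}$; since $\mu^{D}=\mu$ we have $\Omega^{D}\big((0,u),(0,v)\big)=(0,[u\ast v]_{\mathfrak h})\in\mathfrak{h}$, so $\mathfrak{h}$ remains a subpseudoalgebra and $((\mathfrak{G},\Omega^{D}),\mathfrak{g},\mathfrak{h})$ is a quasi-twilled Lie pseudoalgebra. Moreover $\Omega^{D}\big((x,0),(y,0)\big)=(\pi^{D}(x\otimes y),\theta^{D}(x\otimes y))$, so $\mathfrak{g}$ (with bracket $\pi^{D}$) is also a subpseudoalgebra of $(\mathfrak{G},\Omega^{D})$ precisely when $\theta^{D}=0$. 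Using $(\mathrm{id}^{\otimes 2}\otimes_H D)((12)\otimes_H\mathrm{id})=((12)\otimes_H\mathrm{id})(\mathrm{id}^{\otimes 2}\otimes_H D)$, the displayed formula for $\theta^{D}$ rearranges to show that $\theta^{D}(x\otimes y)$ equals $\mu(D(x)\otimes D(y))+\rho(x\otimes D(y))-((12)\otimes_H\mathrm{id})\rho(y\otimes D(x))+\theta(x\otimes y)$ minus $D$ applied to $\pi(x\otimes y)+\eta(x\otimes D(y))-((12)\otimes_H\mathrm{id})\eta(y\otimes D(x))$; in other words $\theta^{D}=0$ is exactly the defining equation of a $\mathfrak{D}$-map.

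Finally, a twilled Lie pseudoalgebra structure on $\mathfrak{g}\boxplus\mathfrak{h}$ (both summands subpseudoalgebras) is the same datum as a matched pair of those two subpseudoalgebras: comparing the $\theta^{D}=0$ form of $\Omega^{D}$ with the bracket $[\cdot\ast\cdot]_{\bowtie}$ and observing that PC2--PC8 then reduce to the matched pair axioms with mixed actions $\eta^{D},\rho^{D}$ (alternatively, transport the matched pair $(\mathfrak{h},\mathrm{Gr}(D))$ of Proposition~\ref{PROPGRAPH} along the isomorphism $e^{D}\colon(\mathfrak{G},\Omega^{D})\to(\mathfrak{G},\Omega)$, which sends $\mathfrak{g}$ onto $\mathrm{Gr}(D)$ and fixes $\mathfrak{h}$). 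Chaining the equivalences of the previous paragraph with this one yields: $D$ is a $\mathfrak{D}$-map $\iff\theta^{D}=0\iff(\mathfrak{g},\pi^{D})$ and $(\mathfrak{h},\mu^{D})$ form a matched pair of Lie pseudoalgebras. The only genuine work is the explicit expansion of $\Omega^{D}$ and keeping track of which permutation acts on the $H^{\otimes 2}$-factors versus the $\otimes_H$-factor (together with the one application of PC1); there is no conceptual obstacle.
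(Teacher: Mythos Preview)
Your proof is correct and follows essentially the same route as the paper: both compute $\Omega^{D}$ via the closed formula $\Omega^{D}=e^{-D}\circ\Omega\circ(e^{D}\otimes e^{D})$ from Proposition~\ref{proptransformation}, evaluate on the various input pairs, and read off the five components. Your write-up is in fact more explicit than the paper's in several respects --- you spell out that $D^{2}=0$ forces $e^{\pm D}=\mathrm{id}\pm D$, you isolate $\theta^{D}=0$ as the pivot between the $\mathfrak{D}$-map identity and the matched-pair condition, and you offer the transport argument via $e^{D}\colon\mathfrak{g}\mapsto\mathrm{Gr}(D)$ as an alternative to checking PC2--PC8 directly --- whereas the paper's proof simply displays $\Omega^{D}((x,0),(y,0))$ and $\Omega^{D}((x,0),(0,v))$ and asserts the conclusion.
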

\begin{proof}
By Proposition \ref{proptransformation}, we compute
\begin{align*}\Omega^D\big((0, u), (0, v)\big) = e^{-D} \Omega(e^D(0, u), e^D(0, v)) = \Omega \big((0, u), (0, v)\big) \in \mathfrak{h}.\end{align*}
Thus, $\mathfrak{h}$ is a Lie subpseudoalgebra of $(\mathfrak{G}, \Omega^D)$, and $\mathfrak{g}$ retains its structure as a left $H$-module. This implies that $((\mathfrak{G}, \Omega^D), \mathfrak{g}, \mathfrak{h})$ is indeed a quasi-twilled Lie pseudoalgebra.
Again using Proposition~\ref{proptransformation}, we find 
\begin{align*}
 \Omega^D\big((x, 0), (y, 0)\big)= & \Big(\pi^D(x\otimes y), \theta^D(x\otimes y)\Big)\\
=& \Big(\pi(x\otimes y) + \eta(x \otimes D(y)) -((12)\otimes_Hid) \eta(y\otimes D(x)),\ \theta(x\otimes y) + \rho(x\otimes D(y))\\&\qquad- ((12)\otimes_Hid)\rho(y\otimes D(x)) -(id\otimes_H D)(\pi(x\otimes y)+\eta(x \otimes D(y))) \\&\qquad+ \mu(D(x)\otimes D(y)) + ((12)\otimes_HD)\eta(y\otimes D(x))\Big),\end{align*}
and
\begin{align*} \Omega ^D\big((x, 0), (0, v)\big)&= \Big(\eta^D(x\otimes v), \rho^D(x\otimes v)\Big)= \Big( \eta(x \otimes v), \rho(x\otimes v) + \mu(D(x)\otimes v) - D( \eta(x\otimes v))\Big).
\end{align*} 
These identities show that the deformed maps $\pi^D, \rho^D, \eta^D, \mu^D, \theta^D$ satisfy the compatibility conditions inherited from the original structure and the action of the $\mathfrak{D}$-map $D$. 
Therefore, the pair $(\mathfrak{g}, \pi^D)$ and $(\mathfrak{h}, \mu^D)$ forms a matched pair of Lie pseudoalgebras precisely when $D$ satisfies the defining identity of a $\mathfrak{D}$-map. This completes the proof.
\end{proof} 
\begin{prop}\label{prop:lie-bialgebra}
Let $ D : \mathfrak{g} \to \mathfrak{h} $ be a $\mathfrak{D}$-map of the quasi-twilled Lie $ H $-pseudoalgebra $ (\mathfrak{G}, \mathfrak{g}, \mathfrak{h}) = (\mathfrak{g} \boxplus \mathfrak{h}, \mathfrak{g}, \mathfrak{h}) $, where $ \mathfrak{h} = \mathfrak{g}^* $ is the dual $ H $-module. Suppose the structure maps $ \pi, \mu, \theta $ define a quasi-Lie $ H $-bialgebra structure on $ \mathfrak{g} $, and that $ D $ is skew-adjoint, i.e., $ D = -D^* $. Then the pair $ (\mathfrak{g}, \pi^D) $ and $ (\mathfrak{h}, \mu^D) $, with the twisted pseudobrackets defined by
\begin{align*}
\pi^D(x \otimes y) &= \pi(x \otimes y) + \eta(x \otimes D(y)) - ((12) \otimes_H \mathrm{id})\,\eta(y \otimes D(x)), \\
\mu^D(u \otimes v) &= \mu(u \otimes v) + \rho(D^*(u) \otimes v) - ((12) \otimes_H \mathrm{id})\,\rho(D^*(v) \otimes u),
\end{align*}
for $ x, y \in \mathfrak{g} $, $ u, v \in \mathfrak{h} $, forms a Lie $ H $-bialgebra.
\end{prop}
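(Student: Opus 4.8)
The plan is to deduce the Lie $H$-bialgebra axioms from the quasi-twilled structure and the $\mathfrak{D}$-map condition by using Theorem~\ref{thm33} together with the hypothesis that $\mathfrak{h}=\mathfrak{g}^*$. First I would invoke Theorem~\ref{thm33}: since $D$ is a $\mathfrak{D}$-map, the pair $(\mathfrak{g},\pi^D)$ and $(\mathfrak{h},\mu^D)$ forms a matched pair of Lie pseudoalgebras, where the twisted brackets are exactly those in the statement (using $D^*=-D$ to rewrite $\mu^D(u\otimes v)=\mu(u\otimes v)+\rho(D^*(u)\otimes v)-((12)\otimes_H\mathrm{id})\rho(D^*(v)\otimes u)$ in the form produced by $e^{X_D}$). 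The crucial point is that a matched pair structure on $(\mathfrak{g},\mathfrak{g}^*)$ with the coadjoint-type actions $\rho,\eta$ coming from $\pi,\mu$ is precisely the data of a Lie $H$-bialgebra; this is the pseudoalgebra analogue of the Manin triple / Lie bialgebra correspondence, and I would cite the appropriate source for Lie conformal/pseudo bialgebras to identify "matched pair on $(\mathfrak{g},\mathfrak{g}^*)$ with mutually coadjoint actions" with "Lie $H$-bialgebra."

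Next I would verify that the actions $\rho^D$ and $\eta^D$ (equivalently $\rho,\eta$, since Theorem~\ref{thm33} gives $\eta^D=\eta$ and $\rho^D(x\otimes v)=\rho(x\otimes v)+\mu(D(x)\otimes v)-(\mathrm{id}\otimes_H D)(\eta(x\otimes v))$) remain the coadjoint actions of $(\mathfrak{g},\pi^D)$ on $\mathfrak{h}=\mathfrak{g}^*$ and of $(\mathfrak{h},\mu^D)$ on $\mathfrak{g}$. Here the hypothesis that $(\mathfrak{g},\pi,\mu,\theta)$ is a quasi-Lie $H$-bialgebra is what guarantees $\eta$ is the coadjoint action of $\mu$ and $\rho$ is the coadjoint action of $\pi$ in the first place; skew-adjointness $D=-D^*$ is exactly what is needed to ensure the $D$-corrections to these actions are again coadjoint with respect to the corrected brackets, i.e. that $\rho^D$ is coadjoint for $\pi^D$ and the dual bracket $\mu^D$ pairs correctly. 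I would make this precise by pairing the defining identities against test elements of $\mathfrak{g}$ and $\mathfrak{g}^*$ and using $\langle D(x),y\rangle=-\langle x,D(y)\rangle$ throughout.

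Finally I would assemble the pieces: the matched pair $(\mathfrak{g},\pi^D;\mathfrak{h},\mu^D)$ with mutually coadjoint actions yields the cobracket $\delta_D:\mathfrak{g}\to H^{\otimes 2}\otimes_H(\mathfrak{g}\otimes\mathfrak{g})$ dual to $\mu^D$, and the matched pair compatibility (PC-type identities of the matched pair, as in Definition of matched pair) translates into the co-Jacobi identity for $\delta_D$ and the cocycle condition relating $\delta_D$ to $\pi^D$ — which is exactly the Lie $H$-bialgebra axiom. The main obstacle I anticipate is the bookkeeping in the dualization step: making the correspondence "matched pair on $(\mathfrak{g},\mathfrak{g}^*)$ $\leftrightarrow$ Lie $H$-bialgebra" rigorous in the pseudoalgebra setting requires care with the $H$-bilinear pairing between $\mathfrak{g}$ and $\mathfrak{g}^*$, the placement of the antipode and of the $(12)\otimes_H\mathrm{id}$ permutations, and checking that "quasi-Lie $H$-bialgebra plus vanishing of the twisted co-Jacobiator" is genuinely what the $\mathfrak{D}$-map equation encodes; I would handle this by reducing, via the pairing, to the already-established matched pair statement of Theorem~\ref{thm33} rather than recomputing the bialgebra axioms from scratch.
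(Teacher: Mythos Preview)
The paper states this proposition without proof; it is immediately followed by the next subsection. There is therefore no argument in the paper to compare against, and I can only assess your plan on its own merits.

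Your overall architecture (Theorem~\ref{thm33} $\Rightarrow$ matched pair, then Manin-triple/duality $\Rightarrow$ Lie $H$-bialgebra) is the natural one. However, your first paragraph contains a genuine slip that propagates through the rest of the outline. Theorem~\ref{thm33} explicitly gives $\mu^D(u\otimes v)=\mu(u\otimes v)$: twisting by $D:\mathfrak{g}\to\mathfrak{h}$ via $e^{X_D}$ leaves the pseudobracket on $\mathfrak{h}$ untouched. The $\mu^D$ appearing in the proposition is a \emph{different} object, carrying extra $\rho(D^*(\cdot)\otimes\cdot)$ terms, and no amount of rewriting with $D^*=-D$ will turn $\mu$ into that expression. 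So the sentence ``the twisted brackets are exactly those in the statement'' is false for the $\mathfrak{h}$-component, and Theorem~\ref{thm33} alone does not hand you the pair $(\pi^D,\mu^D)$ of the proposition.

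What Theorem~\ref{thm33} actually gives is the matched pair $\big((\mathfrak{g},\pi^D),(\mathfrak{h},\mu);\rho^D,\eta\big)$. The remaining work, which your second and third paragraphs gesture at but do not isolate, is a duality computation: under the quasi-Lie bialgebra hypothesis the maps $\rho,\eta$ are the coadjoint actions of $\pi,\mu$, and one must show that $\rho^D$ is the coadjoint action of $\pi^D$ and, dually, that $\eta$ is the coadjoint action of the \emph{modified} bracket $\mu^D$ of the proposition on $\mathfrak{g}^*$. This is where $D=-D^*$ genuinely enters (it makes $e^D$ an isometry of the natural pairing on $\mathfrak{g}\boxplus\mathfrak{g}^*$, so $\Omega^D$ remains invariant), and it is what produces the $\rho(D^*(\cdot)\otimes\cdot)$ correction on the $\mathfrak{h}$-side. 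Once that is established, the matched pair with coadjoint actions is equivalent to the Lie $H$-bialgebra datum, as you say. In short: keep your strategy, but separate cleanly the step ``Theorem~\ref{thm33} gives a matched pair with bracket $\mu$ on $\mathfrak{h}$'' from the subsequent duality step ``skew-adjointness forces the compatible $\mathfrak{g}^*$-bracket in the bialgebra to be the proposition's $\mu^D$''.
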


\subsection{ The controlling algebra of $\mathfrak{D}$-maps} In this subsection, we present the controlling algebra of deformation maps of type I, which, which takes the form of a \emph{curved $\mathfrak{L}_\infty$-pseudoalgebra}. An important consequence of this construction is that it also provides the first known controlling algebra for modified $r$-matrices.
 \begin{defn} \label{def:curved-Linf-conf}
A \emph{curved $\mathfrak{L}_\infty$-pseudoalgebra} is a $\mathbb{Z}$-graded left $H$-module 
$ \mathfrak{g} = \bigoplus_{i \in \mathbb{Z}} \mathfrak{g}_i,$ 
equipped with a collection of graded symmetric, $H^{\otimes k}$-linear operations 
$$l_k : \mathfrak{g}^{\otimes k} \to H^{\otimes k}\otimes_H\mathfrak{g}, \quad \text{for\ } k \geq 0,$$
of degree $1$, where $l_0 \in \mathfrak{g}_1$ is referred to as the \emph{curvature element}. These maps satisfy:
\begin{itemize}
 \item[] Graded pseudo-skew-Symmetry: 
 For any permutation $\sigma \in S_k$,
 \[(\sigma\otimes_Hid) 
 \mathrm{sgn}(\sigma)l_k(x_{\sigma(1)},\dots,x_{\sigma(k)}) = \epsilon(\sigma) {l_k}(x_1,\dots,x_k).\]
 
\item[] Higher pseudo-Jacobi Identity: For $n \geq 1$,
 \[\sum_{i=1}^n \sum_{\sigma \in \mathrm{Sh}(i,n-i)} \mathrm{sgn}(\sigma) \epsilon(\sigma)(\sigma\otimes_Hid) {l_{n-i+1}}\Big({l_i}(x_{\sigma(1)},\dots,x_{\sigma(i)}), x_{\sigma(i+1)},\dots,x_{\sigma(n)}
 \Big) = 0.
 \]
\end{itemize}
We denote such a structure by $(\mathfrak{g}, \{l_k\}_{k=0}^\infty)$. If the curvature element satisfies $l_0 = 0$, then the structure becomes an \emph{$\mathfrak{L}_\infty$-pseudoalgebra}.
\end{defn}
\begin{rem}\label{rem:lk-degree}
The operations $l_k$ have degree $\deg(l_k) = 1$, which is consistent with the standard grading convention for curved $\mathfrak{L}_\infty$-algebras. Specifically, $l_0$ is a curvature element of degree $1$, $l_1$ acts as a differential of degree $1$, and $l_2$ defines a Lie pseudobracket, also of degree $1$. For $k\geq 3$ , the higher operations $l_k$ serve as homotopies that correct the failure of the lower brackets to satisfy the Jacobi identity strictly. 
\end{rem}
\begin{defn}\label{def:MC-twist}
Let $(\mathfrak{g}, \{l_k\}_{k=0}^\infty)$ be a curved $\mathfrak{L}_\infty$-pseudoalgebra over a $\mathbb{Z}$-graded $H$-module $\mathfrak{g} = \bigoplus_{i \in \mathbb{Z}} \mathfrak{g}_i$, equipped with pseudobrackets $
l_{k \geq 0} : \mathfrak{g}^{\otimes k} \to H^{\otimes k}\otimes_H\mathfrak{g},$ of degree $1$. A \emph{Maurer-Cartan element} is a homogeneous element $x \in \mathfrak{g}_0$ of degree $0$ that satisfies the Maurer-Cartan equation
\begin{align*} 
 {l_k}(x\otimes \dots \otimes x) = 0,\text{\ for\ all\ k.}
\end{align*} 
Now suppose $x$ is such a Maurer-Cartan element. For each $k \geq 1$, we define \emph{twisted operations} 
$${l_k^x}(x_1\otimes \dots\otimes x_k):= \sum_{n=0}^\infty \frac{1}{n!}
{l_{k+n}}(\underbrace{x\otimes \dots\otimes x}_{n \text{ times}}\otimes x_1\otimes \dots\otimes x_k),$$ where the formal variables are adjusted appropriately to ensure consistency with grading and $H^{\otimes k}$-linearity conditions. The resulting collection $\{l_k^x\}_{k=1}^\infty$ defines a new $\mathfrak{L}_\infty$-pseudoalgebra structure on $\mathfrak{g}$, called the \emph{twisted $\mathfrak{L}_\infty$-pseudoalgebra to $x$}.
\end{defn}

\noindent Next, we recall Voronov's derived bracket construction \cite{Voronov}, a powerful method for constructing a curved $\mathfrak{L}_\infty$-algebra from a graded Lie algebra equipped with additional structure.
\begin{defn}[\cite{Voronov}]
A curved $V$-data consists of a quadruple $(\mathfrak{J}, \mathfrak{K}, P, \Delta)$, where
\begin{enumerate}
\item $(\mathfrak{J} = \bigoplus \mathfrak{J}^i, [\cdot, \cdot])$ is a $\mathbb{Z}$-graded Lie algebra,
\item $\mathfrak{K}$ is an abelian graded Lie subalgebra of $(\mathfrak{J}, [\cdot, \cdot])$, 
\item $P : \mathfrak{J} \to \mathfrak{J}$ is a projection operator (i.e., $P \circ P = P$) such that Image of $P$ is $\mathfrak{K}$, 
and Kernel of $P$ is a graded Lie subalgebra of $(\mathfrak{J}, [\cdot, \cdot])$,
\item $\Delta \in \mathfrak{J}^1$ is an element of degree $1$, such that $[\Delta, \Delta] = 0$.
\end{enumerate}
In addition, if we have $\Delta \in {ker(P)}^1$ and still $[\Delta, \Delta] = 0$, then we call $(\mathfrak{J}, \mathfrak{K}, P, \Delta)$ as a $V$-data (without curvature). 
\end{defn}
\begin{thm}\label{thmvor}\cite{Voronov}
Let $(\mathfrak{J}, \mathfrak{K}, P, \Delta)$ be a \emph{curved $V$-data}. Then $(\mathfrak{K}, \{l_k\}_{k=0}^{\infty})$ is a curved $\mathfrak{L}_\infty$-algebra, where $l_k$ are defined by
\begin{align*}
 l_0 = P(\Delta), \quad {l_{k\geq 1}} (x_1, \dots, x_k) = P([\cdots{[{[\Delta, x_1]}, x_2]}, \cdots ], x_k]),
\end{align*} for all $x_1,\cdots x_k\in \mathfrak{K}$.
\end{thm}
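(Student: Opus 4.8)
The plan is to verify directly that the higher derived brackets $\{l_k\}_{k\ge 0}$ satisfy the two defining axioms of a curved $\mathfrak{L}_\infty$-algebra from Definition \ref{def:curved-Linf-conf}: graded (shifted) skew-symmetry and the higher Jacobi identities. Write $D:=[\Delta,-]:\mathfrak{J}\to\mathfrak{J}$; since $[\Delta,\Delta]=0$, the graded Jacobi identity gives $D^2=\tfrac12[[\Delta,\Delta],-]=0$, so $D$ is a degree-$1$ differential on $\mathfrak{J}$. For $x_1,\dots,x_k\in\mathfrak{K}$ put $\Phi_k(x_1,\dots,x_k):=[\,\cdots[[\Delta,x_1],x_2],\dots,x_k\,]\in\mathfrak{J}$ (with the convention $\Phi_0:=\Delta$), so that $l_k=P\circ\Phi_k$ for all $k\ge 0$ and $l_0=P(\Delta)$. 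The three inputs that drive the argument are: (a) $\mathfrak{K}$ is an abelian subalgebra, so $[a,b]=0$ for $a,b\in\mathfrak{K}$; (b) $\mathrm{im}(P)=\mathfrak{K}$ and $\ker(P)$ is a subalgebra, hence $\mathfrak{J}=\mathfrak{K}\oplus\ker(P)$ as graded Lie algebras; (c) $[\Delta,\Delta]=0$.

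First I would settle graded skew-symmetry, which is the easy axiom. It suffices to check invariance (up to the Koszul sign) under an adjacent transposition of two consecutive arguments $x_i,x_{i+1}$ of $\Phi_k$, since these generate $S_k$. The affected innermost pair of brackets satisfies, by graded Jacobi, $[\,[\cdot,x_i],x_{i+1}\,]=\pm[\,[\cdot,x_{i+1}],x_i\,]+(\mathrm{sign})\,[\,\cdot,[x_i,x_{i+1}]\,]$, and the correction term vanishes by (a). Tracking the signs shows this is exactly the shifted symmetry relation for $l_k$; for $l_0\in\mathfrak{J}^1$ there is nothing to check.

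The substance is the higher Jacobi identity, and the strategy is to prove a stronger identity in $\mathfrak{J}$ itself and then apply $P$. Concretely, I would show by induction on $n$ that, before projecting,
\[
\sum_{i=0}^{n}\ \sum_{\sigma\in\mathrm{Sh}(i,n-i)} \pm\,\Phi_{n-i+1}\!\left(P\Phi_i(x_{\sigma(1)},\dots,x_{\sigma(i)}),\,x_{\sigma(i+1)},\dots,x_{\sigma(n)}\right)\;=\;\pm\,\tfrac12\,[[\Delta,\Delta],x_1,\dots,x_n],
\]
whose right-hand side is $0$ by (c); applying $P$ then yields precisely the higher pseudo-Jacobi relation for $\{l_k\}$ (the $i=0$ term producing the curvature contribution $l_{n+1}(l_0,-)$). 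The crucial reduction is that in each summand one may replace $P\Phi_i(\cdots)$ by the full $\Phi_i(\cdots)$: the discarded $\ker(P)$-component, when fed back into the outer iterated bracket $\Phi_{n-i+1}$ and summed over shuffles, recombines—using that $\ker(P)$ is a subalgebra and that $D=[\Delta,-]$ is a derivation of $[\cdot,\cdot]$—with the remaining terms, so that the whole alternating sum collapses, via repeated graded Jacobi, to the nested commutator of $\Delta$ with itself against the $x_j$. This is the step where hypotheses (b) and (c) are genuinely used.

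The main obstacle is exactly this recombination step: keeping track of the Koszul signs, the shuffle decompositions, and how the $\ker(P)$-parts propagate through the outer brackets is the combinatorial heart of Voronov's proof, and it is cleanest to organize it through his commutator/derivation calculus on $\mathfrak{J}$ (equivalently, via the cofree-coalgebra description of $\mathfrak{L}_\infty$-structures on $\mathfrak{K}[1]$). As a guiding alternative that bypasses part of the bookkeeping, one can argue at the level of Maurer-Cartan elements: for a degree-$0$ element $a\in\mathfrak{K}$ in a pronilpotent or suitably filtered setting, $e^{[-,a]}$ is a Lie-algebra automorphism of $\mathfrak{J}$, so $[\,e^{[-,a]}\Delta,\,e^{[-,a]}\Delta\,]=0$; projecting and expanding in $a$ shows that $\sum_{k}\tfrac1{k!}\,l_k(a,\dots,a)$ satisfies the Maurer-Cartan equation, and polarizing over a generic family of such $a$ recovers the full set of $\mathfrak{L}_\infty$-relations. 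I would present the direct inductive verification as the rigorous backbone and remark that the $H$-linear variant used later in the paper (with the $H^{\otimes k}$-linearity and the shuffle twists $(\sigma\otimes_H\mathrm{id})$) follows verbatim, since every manipulation above is a formal consequence of the graded Lie algebra axioms together with (a), (b), (c).
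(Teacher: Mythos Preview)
The paper does not supply a proof of this theorem: it is stated with the citation \cite{Voronov} and immediately used, so there is no ``paper's own proof'' to compare against. Your sketch is essentially a faithful outline of Voronov's original argument---the graded symmetry via adjacent transpositions and the abelianness of $\mathfrak{K}$ is exactly right, and the strategy of proving an unprojected identity in $\mathfrak{J}$ that collapses to $\tfrac12[[\Delta,\Delta],\cdots]=0$ is the standard route.

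One caution on the step you yourself flag as the obstacle. The claim that ``one may replace $P\Phi_i(\cdots)$ by the full $\Phi_i(\cdots)$'' is not literally true termwise; the $\ker(P)$-pieces do not vanish individually when plugged into $\Phi_{n-i+1}$. What makes the argument go through is that $\ker(P)$ is a graded Lie subalgebra, so $P[a,b]=P[Pa,b]+P[a,Pb]-P[Pa,Pb]$ fails in general but the specific iterated sums that arise telescope after one organizes them as in Voronov's Lemma/Proposition preceding his Theorem~1. Your paragraph gestures at this (``recombines \dots via repeated graded Jacobi'') but does not isolate the inductive statement that actually makes the cancellation transparent; if you were to write this up, that is the lemma you would need to state and prove cleanly. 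The Maurer--Cartan/polarization alternative you mention is also correct and is in fact how Voronov motivates the construction, though making the polarization step rigorous (passing from the MC equation for a single $a$ to the full multilinear identities) requires either a formal-parameter or a density argument.
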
\noindent Based on the this framework, we may give the controlling algebra of $\mathfrak{D}$-maps associated to a quasi-twilled Lie pseudoalgebra.
\begin{thm}\label{thm:pseudo-mc-pseudo}
Let $ (\mathfrak{G}, \mathfrak{g}, \mathfrak{h}) $ be a quasi-twilled pseudoalgebra over a cocommutative Hopf algebra $H$. Then there exists a curved $ V $-data $ (\mathfrak{J}, \mathfrak{K}, P, \Delta) $, where 
 $\mathfrak{J}= \bigoplus_{n \geq 0} C^{n+1}(\mathfrak{g} \boxplus \mathfrak{h}, \mathfrak{g} \boxplus \mathfrak{h})$ is the graded $ H $-module equipped with the Nijenhuis-Richardson bracket $ [\cdot, \cdot]_{\mathrm{NR}} $,
and $ \mathfrak{K} = \bigoplus_{n \geq 0} C^{n+1}(\mathfrak{g}, \mathfrak{h}) $ is an abelian graded Lie subalgebra of $ \mathfrak{J} $.
The map $ P : \mathfrak{J} \to \mathfrak{J} $ is the $ H $-linear projection onto $ \mathfrak{K} $, and $ \Delta = \pi + \rho + \mu + \eta + \theta \in \mathfrak{J}^1 $ satisfies the Maurer-Cartan condition $[\Delta, \Delta]_{\mathrm{NR}} = 0$. This curved $ V $-data induces a curved $ \mathfrak{L}_\infty $-pseudoalgebra structure on $ \mathfrak{K} $ with operations explicitly given by 
$$l_0 = \theta, \quad
l_1(f) = [\pi + \rho, f]_{\mathrm{NR}}, \quad
l_2(f, g) = [[\mu + \eta, f]_{\mathrm{NR}}, g]_{\mathrm{NR}},
\text{ and } l_{k \geq 3} = 0. $$
\noindent Furthermore, an $ H $-module homomorphism $ D : \mathfrak{g} \to \mathfrak{h} $ is a $ \mathfrak{D} $-map of $ (\mathfrak{G}, \mathfrak{g}, \mathfrak{h}) $ if and only if it satisfies the Maurer-Cartan equation 
\begin{align*}
 l_0 + l_1(D) + \frac{1}{2} l_2(D, D) = 0.
\end{align*}
\end{thm}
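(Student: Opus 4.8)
The plan is to construct the claimed curved $V$-data, apply Voronov's derived-bracket theorem (Theorem~\ref{thmvor}) to obtain a curved $\mathfrak{L}_\infty$-pseudoalgebra on $\mathfrak{K}$, read off the brackets $l_k$ from bidegree bookkeeping, and finally evaluate the Maurer--Cartan equation on inputs drawn from $\mathfrak{g}$ to recover the defining identity of a $\mathfrak{D}$-map.

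First I would check the $V$-data axioms. That $(\mathfrak{J},[\cdot,\cdot]_{\mathrm{NR}})$ is a graded Lie algebra is already recorded, and under the lift map $C^{n+1}(\mathfrak{g},\mathfrak{h})$ is precisely the space of homogeneous cochains in $C^{n+1}(\mathfrak{g}\boxplus\mathfrak{h},\mathfrak{g}\boxplus\mathfrak{h})$ whose bidegree has $\mathfrak{h}$-slot equal to $-1$. Hence $\mathfrak{K}$ is the span of the bidegree $k|{-1}$ components of $\mathfrak{J}$, which is an abelian subalgebra by the lemma that brackets of two $*|{-1}$ cochains vanish, while the complementary projection $P$ has kernel the span of the bidegree $k|l$ components with $l\ge0$, a subalgebra because bidegree is additive under $[\cdot,\cdot]_{\mathrm{NR}}$. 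Since $\Omega=\pi+\rho+\mu+\eta+\theta$ is a Maurer--Cartan element we may take $\Delta=\Omega\in\mathfrak{J}^1$; as $\pi,\rho$ have bidegree $1|0$, $\mu,\eta$ have bidegree $0|1$, and $\theta$ has bidegree $2|{-1}$, only $\theta$ lands in $\mathfrak{K}$, so $P(\Delta)=\theta$ — nonzero in general, so the data is genuinely curved. Theorem~\ref{thmvor} then produces the curved $\mathfrak{L}_\infty$-pseudoalgebra on $\mathfrak{K}$ with $l_0=\theta$ and with the higher brackets given by nested Nijenhuis--Richardson brackets against $\Delta$ followed by $P$.

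Next I would evaluate those higher brackets by tracking bidegrees. For $f,g,\dots\in\mathfrak{K}$ of bidegree $k|{-1}$: the term $[\theta,f]_{\mathrm{NR}}$ vanishes (two $*|{-1}$ cochains); $[\pi+\rho,f]_{\mathrm{NR}}$ again has $\mathfrak{h}$-slot $-1$, so it lies in $\mathfrak{K}$ and $P$ is the identity on it; and $[\mu+\eta,f]_{\mathrm{NR}}$ has $\mathfrak{h}$-slot $0$, so $P$ kills it. This gives $l_1(f)=[\pi+\rho,f]_{\mathrm{NR}}$. A second bracket with $g$ annihilates the $\pi+\rho$-piece (again two $*|{-1}$ cochains) and keeps the $\mu+\eta$-piece inside $\mathfrak{K}$, so $l_2(f,g)=[[\mu+\eta,f]_{\mathrm{NR}},g]_{\mathrm{NR}}$; and a third bracket once more pairs two $*|{-1}$ cochains, so $l_{k\ge3}=0$.

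Finally, to identify the Maurer--Cartan equation with the $\mathfrak{D}$-map condition, note that $l_0+l_1(D)+\tfrac12 l_2(D,D)$ lies in $C^2(\mathfrak{g},\mathfrak{h})$, so I would evaluate it on a pair $(x,0),(y,0)$ with $x,y\in\mathfrak{g}$. Substituting the explicit lifts of $\pi,\rho,\mu,\eta,\theta$ and of $D$ into the circle-product formula~\eqref{circleproduct}, and using that the $H$-linear map $D$ commutes with $(12)\otimes_H\mathrm{id}$, the term $l_0$ contributes $\theta(x\otimes y)$, the term $l_1(D)$ contributes $-D(\pi(x\otimes y))+\rho(x\otimes D(y))-((12)\otimes_H\mathrm{id})\rho(y\otimes D(x))$, and $\tfrac12 l_2(D,D)$ contributes $\mu(D(x)\otimes D(y))-D\big(\eta(x\otimes D(y))-((12)\otimes_H\mathrm{id})\eta(y\otimes D(x))\big)$, the prefactor $\tfrac12$ absorbing the factor $2$ that arises from the symmetry of the two $\hat D$-insertions (together, for the $\mu$-piece, with the skew-symmetry of $\mu$). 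Setting the sum to zero and collecting the $D$-of-a-bracket terms on one side reproduces verbatim the defining identity of a $\mathfrak{D}$-map of $(\mathfrak{G},\mathfrak{g},\mathfrak{h})$, and every step is reversible. The real work is in this last step — passing from the component descriptions of $\pi,\rho,\mu,\eta,\theta$ to honest cochains on $\mathfrak{g}\boxplus\mathfrak{h}$ and then chasing shuffle sums, signs, and $(12)\otimes_H\mathrm{id}$-swaps through the nested Nijenhuis--Richardson brackets, in particular recognising which circle-product terms drop out because a lifted component is handed an argument in the ``wrong'' summand (for example $\hat\pi$ applied to a slot valued in $\mathfrak{h}$). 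By comparison, the $V$-data verification and the identification of the $l_k$ are essentially formal once the two bidegree lemmas are available.
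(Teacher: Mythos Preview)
Your proposal is correct and follows essentially the same approach as the paper's own proof: set up the curved $V$-data, invoke Voronov's theorem, determine the brackets $l_k$ by tracking which components of $\Delta=\pi+\rho+\mu+\eta+\theta$ survive the projection $P$ after successive Nijenhuis--Richardson brackets, and then expand the Maurer--Cartan equation on inputs from $\mathfrak{g}$ to recover the defining identity of a $\mathfrak{D}$-map. Your systematic use of the bidegree lemmas (that $\mathfrak{K}$ consists of the $k|{-1}$ components, that two $*|{-1}$ cochains bracket to zero, and that bidegree is additive under $[\cdot,\cdot]_{\mathrm{NR}}$) is in fact more explicit than the paper's somewhat informal identification of which terms land in $\ker P$, but the logical skeleton is identical.
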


\begin{proof}

Recall that $ \Delta = \pi + \rho + \mu + \eta + \theta $. Applying the projection $ P $, which maps into $ \mathfrak{K} = \bigoplus_n C^{n+1}(\mathfrak{g}, \mathfrak{h}) $, we have 
$$
l_0 = P(\Delta) = P(\pi + \rho + \mu + \eta + \theta) = P(\theta) = \theta,
$$
since $ \pi, \rho, \mu, \eta $ map into $ C^*(\mathfrak{g}, \mathfrak{g}) $, $ C^*(\mathfrak{h}, \mathfrak{h}) $, or $ C^*(\mathfrak{h}, \mathfrak{g}) $, all of which lie in $ \ker(P) $.

\noindent For $ f \in C^n(\mathfrak{g}, \mathfrak{h}) $, we compute 
$$
l_1(f) = P([\Delta, f]_{\mathrm{NR}}) = P([\pi + \rho + \mu + \eta + \theta, f]_{\mathrm{NR}}).
$$
Only the components $ \pi $ and $ \rho $, which map $ \mathfrak{g} \to \mathfrak{g} $ and $ \mathfrak{g} \to \mathfrak{h} $, respectively, contribute to a bracket that lands in $ \mathfrak{K} $ under $ P $. Thus 
 $
l_1(f) = [\pi, f]_{\mathrm{NR}} + [\rho, f]_{\mathrm{NR}} = [\pi + \rho, f]_{\mathrm{NR}}.
 $

\noindent For $ f, g \in \mathfrak{K} $, we compute:
$$
l_2(f, g) = P([[\Delta, f]_{\mathrm{NR}}, g]_{\mathrm{NR}}).
$$
Now, $ [\Delta, f]_{\mathrm{NR}} $ may involve $ \mu $ and $ \eta $, which map $ \mathfrak{h} \to \mathfrak{h} $ and $ \mathfrak{h} \to \mathfrak{g} $. Bracketing with $ g $ and projecting to $ \mathfrak{K} $, only $ \mu $ and $ \eta $ survive, resulting in
$$
l_2(f, g) = [[\mu + \eta, f]_{\mathrm{NR}}, g]_{\mathrm{NR}}.
$$
\noindent Higher operations $ l_k $ for $ k \geq 3 $ vanish because repeated bracketing with $ \Delta $ increases the number of inputs and eventually maps into spaces annihilated by $ P $, due to the abelian nature of $ \mathfrak{K} $ and the grading.\\
\noindent Now, let $ D \in C^1(\mathfrak{g}, \mathfrak{h}) $, the Maurer-Cartan equation yields 
$$
l_0 + l_1(D) + \frac{1}{2} l_2(D, D) = 0.
$$
Thus, we get
\begin{align*}
\theta + [\pi + \rho, D]_{\mathrm{NR}} + \frac{1}{2} [[\mu + \eta, D]_{\mathrm{NR}}, D]_{\mathrm{NR}} = 0.
\end{align*}
Expanding in components, we obtain
\begin{align*}
&\theta(x \otimes y) + \rho(x \otimes D(y)) - ((12) \otimes_H \mathrm{id})\rho(y \otimes D(x)) \\
&+ \mu(D(x) \otimes D(y)) - (\mathrm{id} \otimes_H D)\big(\pi(x \otimes y) + \eta(x \otimes D(y))\big) + ((12) \otimes_H \mathrm{id})\big(\eta(y \otimes D(x))\big) = 0.
\end{align*}
Or precisely
\begin{align*}
&(\mathrm{id} \otimes_H D)\big(\pi(x \otimes y) + \eta(x \otimes D(y)) - ((12) \otimes_H \mathrm{id})\,\eta(y \otimes D(x))\big) \\
&= \mu(D(x) \otimes D(y)) + \rho(x \otimes D(y)) - ((12) \otimes_H \mathrm{id})\,\rho(y \otimes D(x)) + \theta(x \otimes y),
\end{align*} for all $ x, y \in \mathfrak{g}$. Indeed this is the defining identity for $D$ to be a $\mathfrak{D}$-map of the quasi-twilled Lie pseudoalgebra $ (\mathfrak{G}, \mathfrak{g}, \mathfrak{h}) $. Thus, $ D $ is a Maurer-Cartan element of the curved $ \mathfrak{L}_\infty $-pseudoalgebra $ (\mathfrak{K}, l_0, l_1, l_2) $ if and only if $ D $ is a $ \mathfrak{D} $-map of the quasi-twilled pseudoalgebra $ (\mathfrak{G}, \mathfrak{g}, \mathfrak{h}) $. This completes the proof.\end{proof}
 \noindent As an immediate application of the above theorem, we obtain the controlling algebra of modified $r$-matrices, namely a curved $\mathfrak{L}_\infty$- pseudoalgebra whose Maurer-Cartan elements are precisely the modified $r$-matrices.
\begin{cor}\label{cor:twilled-to-curved-Linf}
Consider the quasi-twilled Lie pseudoalgebra $(\mathfrak{g} \oplus_M \mathfrak{g}, \mathfrak{g}, \mathfrak{g})$ given by Eq. \eqref{eq2222}. Then the graded space $\left(\bigoplus_{n=0}^\infty C^{n+1}(\mathfrak{g}, \mathfrak{g}), l_0, l_1, l_2\right)$ carries the structure of a curved $\mathfrak{L}_\infty$-pseudoalgebra. The curvature term is
 $ l_0 = \theta$, with $ \theta(x\otimes y) = [x* y]_\mathfrak{g} $ recovers the original pseudobracket on $ \mathfrak{g} $. The unary operation vanishes identically, i.e., $ l_1 = 0 $ reflecting the absence of a nontrivial differential in this case. The binary operation $ l_2 $ is defined for any $ f \in C^p(\mathfrak{g}, \mathfrak{g}) $, $ g \in C^q(\mathfrak{g}, \mathfrak{g}) $ is given by
 {\fontsize{9}{9}\selectfont\begin{align*}
&{l_2}(f,g)\big(x_1\otimes \dots\otimes x_{p+q}\big) 
\\=& \sum_{\sigma \in \mathrm{Sh}(q,1,p-1)} (-1)^{p} \varepsilon(\sigma)(\sigma\otimes_Hid)\quad f\Big([{x_{\sigma(q+1)}}* {g(x_{\sigma(1)},\dots,x_{\sigma(q)})}]\otimes x_{\sigma(q+2)}\otimes \dots\otimes x_{\sigma(p+q)} \Big) \\&
 - \sum_{\sigma \in \mathrm{Sh}(p,1,q-1)} (-1)^{p(q+1)} \varepsilon(\sigma)(\sigma\otimes_Hid)\quad g\Big([{x_{\sigma(p+1)}}*{f(x_{\sigma(1)},\dots,x_{\sigma(p)})}]\otimes x_{\sigma(p+2)}\otimes\dots\otimes
 x_{\sigma(p+q)} \Big) \\ &
 + \sum_{\sigma \in \mathrm{Sh}(p,q)} (-1)^{p(q+1)} \varepsilon(\sigma)(\sigma\otimes_Hid)\quad \Big[{f(x_{\sigma(1)},\dots,x_{\sigma(p)})}*g(x_{\sigma(p+1)},\dots,x_{\sigma(p+q)})\Big]_\mathfrak{g},
\end{align*}}for all homogeneous elements $x_1,\dots,x_{p+q} \in \mathfrak{g}$, 
and where $ \varepsilon(\sigma) $ denotes the Koszul sign associated with the permutation $ \sigma \in S_{p+q}$. 
\end{cor}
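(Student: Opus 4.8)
The statement is an application of Theorem~\ref{thm:pseudo-mc-pseudo} to the quasi-twilled Lie pseudoalgebra $(\mathfrak{g}\oplus_M\mathfrak{g},\mathfrak{g},\mathfrak{g})$ of Example~\ref{ggg}, so the plan is to specialize the structure maps, read off $l_0,l_1$ and the vanishing of the higher operations for free, and then make $l_2$ explicit. For the pseudobracket \eqref{eq2222} one has $\pi=\rho=0$, $\mu=\eta=[\cdot*\cdot]_\mathfrak{g}$ and $\theta=p[\cdot*\cdot]_\mathfrak{g}$. Substituting these into the formulas $l_0=P(\Delta)=P(\theta)=\theta$, $l_1(f)=[\pi+\rho,f]_{\mathrm{NR}}$ and $l_{k\ge3}=0$ furnished by Theorem~\ref{thm:pseudo-mc-pseudo} (via Voronov's construction, Theorem~\ref{thmvor}), one gets immediately $l_1=0$ because $\pi=\rho=0$, while $l_0=\theta=p[\cdot*\cdot]_\mathfrak{g}$ is the original pseudobracket up to the scalar $p$. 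In particular the Maurer--Cartan equation $l_0+l_1(D)+\tfrac12 l_2(D,D)=0$ will reduce to the modified-Yang--Baxter-type identity defining a $\mathfrak{D}$-map, i.e. a modified $r$-matrix, which is the point of the corollary. Thus the only content left to establish is the explicit shape of $l_2$.

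To derive the formula for $l_2$, I would start from $l_2(f,g)=P\big([[\Delta,f]_{\mathrm{NR}},g]_{\mathrm{NR}}\big)=[[\widehat{\mu+\eta},f]_{\mathrm{NR}},g]_{\mathrm{NR}}$ --- only the $\mu$- and $\eta$-components of $\Delta$ survive the projection, exactly as in the proof of Theorem~\ref{thm:pseudo-mc-pseudo} ($\theta$ contributes nothing to $l_2$ since on $\mathfrak{g}$-tuples $[\theta,f]_{\mathrm{NR}}=0$ for type reasons, and $\pi=\rho=0$ here). I would then expand each Nijenhuis--Richardson bracket as a difference of two circle products via \eqref{circleproduct} and discard the summands killed by $P$. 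The organizing principle: although $\mathfrak{g}$ and $\mathfrak{h}=\mathfrak{g}$ coincide as $H$-modules, the lifts $\widehat\mu$ and $\widehat\eta$ remember that $\mu$ consumes two $\mathfrak{h}$-arguments whereas $\eta$ consumes one $\mathfrak{g}$- and one $\mathfrak{h}$-argument, and $f,g$ are $\mathfrak{h}$-valued but defined on $\mathfrak{g}$-tuples. Hence, with all inputs in $\mathfrak{g}$, only three families of contractions are type-admissible: (i) $\widehat\mu$ applied to the pair of outputs, giving $\big[f(\cdots)*g(\cdots)\big]_\mathfrak{g}$ summed over $\mathrm{Sh}(p,q)$; (ii) $\widehat\eta$ contracting an original $\mathfrak{g}$-input $x_{\sigma(q+1)}$ with the output of $g$ and feeding the result into $f$, giving $f\big([x_{\sigma(q+1)}*g(\cdots)]\otimes\cdots\big)$ summed over $\mathrm{Sh}(q,1,p-1)$; and (iii) the symmetric term with $f$ and $g$ interchanged, summed over $\mathrm{Sh}(p,1,q-1)$. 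Collecting the Koszul signs picked up from the two nested circle products, together with the $(\sigma\otimes_H\mathrm{id})$ twists, then produces the prefactors $(-1)^p$, $(-1)^{p(q+1)}$, $(-1)^{p(q+1)}$ recorded in the statement.

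The main obstacle is precisely this sign-and-shuffle bookkeeping: each $[\cdot,\cdot]_{\mathrm{NR}}$ is a difference of two $\circledcirc$'s, so the doubly nested bracket a priori expands into several summands, and one must verify that after projection they collapse to exactly the three written terms with the stated shuffle index sets and Koszul signs, while keeping the $H$-linear reorderings $(\sigma\otimes_H\mathrm{id})$ consistent throughout. A secondary check is that no further terms survive $P$: $\widehat\eta$ cannot contract the outputs of $f$ and $g$ with one another (both lie in $\mathfrak{h}$, whereas $\eta$ needs a $\mathfrak{g}$-slot), and $\widehat\mu$ cannot involve any original input of $\mathfrak{g}$ (wrong type), which is what forces the three-term form. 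Once $l_2$ is identified, the curved $\mathfrak{L}_\infty$-pseudoalgebra axioms --- graded pseudo-skew-symmetry and the higher pseudo-Jacobi identities among $l_0,l_1,l_2$ --- hold automatically, since $(\mathfrak{g}\oplus_M\mathfrak{g},\mathfrak{g},\mathfrak{g})$ is a genuine quasi-twilled Lie pseudoalgebra and Theorem~\ref{thm:pseudo-mc-pseudo} applies without change.
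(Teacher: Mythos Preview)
Your proposal is correct and matches the paper's approach: the corollary is stated there as an immediate application of Theorem~\ref{thm:pseudo-mc-pseudo} with no separate proof, and your specialization $\pi=\rho=0$, $\mu=\eta=[\cdot*\cdot]_\mathfrak{g}$, $\theta=p[\cdot*\cdot]_\mathfrak{g}$ together with the type-analysis expansion of $l_2=[[\mu+\eta,f]_{\mathrm{NR}},g]_{\mathrm{NR}}$ is exactly what is needed. Your bookkeeping of which contractions survive the projection $P$ is in fact more detailed than anything the paper provides, and your observation that $l_0$ carries the scalar $p$ (which the corollary's statement silently sets to $1$) is a fair catch.
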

\noindent By applying Theorem \ref{thm:pseudo-mc-pseudo} to appropriate examples, we obtain the controlling algebras for crossed homomorphisms, derivations, and Lie pseudoalgebra homomorphisms.
\begin{cor}
Consider the quasi-twilled Lie pseudoalgebra $(\mathfrak{g} \ltimes_\rho \mathfrak{h}, \mathfrak{g}, \mathfrak{h})$ given in Example \ref{actionLie} obtained from the action Lie pseudoalgebra $\mathfrak{g} \ltimes_\rho \mathfrak{h}$. Then $\left(C^n(\mathfrak{g}, \mathfrak{h}), d, \{\cdot, \cdot\}\right)$ is a differential graded Lie algebra, where the differential $d$ is given by
\begin{align}\label{differential}
 d(f)(x_1\otimes \dots\otimes x_{p+1}) &= \sum_{i=1}^{p+1} (-1)^{p+i}(\sigma_{1,i}\otimes_Hid) \rho(x_i, f(x_1\otimes \dots\otimes \hat{x_i}, \dots\otimes x_{p+1}))\\&\qquad+ \sum_{i<j} (-1)^{p+i+j-1}(\sigma_{i,j}\otimes_Hid) f([{x_i}* {x_j}]_{\mathfrak{g}}\otimes x_1\otimes\dots\otimes \hat{x_i}, \dots\otimes \hat{x_j}\otimes \dots, x_{p+1}),\nonumber
\end{align}
for all $f \in C^p( \mathfrak{g}, \mathfrak{h})$, and the graded Lie bracket $\{\cdot, \cdot\}$ is given by
\begin{align}\label{gradliebra}
 \{f, g\}(x_1, \dots, x_{p+q})&= \sum_{\sigma \in S(p,q)} (-1)^{pq+1} (-1)^\sigma p(\sigma_{i,j}\otimes_Hid) {[{{f}(x_{\sigma(1)}, \dots, x_{\sigma(p)})}* {g}(x_{\sigma(p+1)}, \dots, x_{\sigma(p+q)})]}_{\mathfrak{h}}, 
\end{align}
for all $f \in C^p( \mathfrak{g}, \mathfrak{h})$, $g \in C^q( \mathfrak{g}, \mathfrak{h})$. Additionally, Maurer-Cartan elements of this differential-graded Lie algebra are exactly crossed homomorphisms of weight $p$ from the Lie pseudoalgebra $(\mathfrak{g}, [\cdot*\cdot]_{\mathfrak{g}})$ to the Lie pseudoalgebra $(\mathfrak{h}, [\cdot* \cdot]_{\mathfrak{h}})$. That is, an element $f\in C(\mathfrak{g},\mathfrak{h})$ satisfies the Maurer–Cartan equation $d(f)+\frac{1}{2}\{f, f\}=0.$ We therefore say that this differential graded Lie algebra governs or controls the structure of crossed homomorphisms, and we refer to it as the  controlling algebra of the crossed homomorphism.
\end{cor}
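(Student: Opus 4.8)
The plan is to specialize Theorem~\ref{thm:pseudo-mc-pseudo} to the quasi-twilled Lie pseudoalgebra $(\mathfrak{g}\ltimes_\rho\mathfrak{h},\mathfrak{g},\mathfrak{h})$ and then unwind the resulting derived brackets into the explicit formulas \eqref{differential} and \eqref{gradliebra}. First I would read off the structure maps from the pseudobracket of $\mathfrak{g}\ltimes_\rho\mathfrak{h}$ in Example~\ref{actionLie}: one has $\pi=[\cdot*\cdot]_{\mathfrak{g}}$, $\rho$ equal to the given action, $\mu=p[\cdot*\cdot]_{\mathfrak{h}}$, and $\eta=\theta=0$. Substituting into Theorem~\ref{thm:pseudo-mc-pseudo} gives, on $\mathfrak{K}=\bigoplus_{n} C^{n+1}(\mathfrak{g},\mathfrak{h})$, the operations $l_0=P(\theta)=0$, $l_1(f)=[\pi+\rho,f]_{\mathrm{NR}}$, $l_2(f,g)=[[\mu,f]_{\mathrm{NR}},g]_{\mathrm{NR}}$ and $l_{k\ge 3}=0$. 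Since $l_0=0$ and all higher brackets vanish, the higher pseudo-Jacobi identities of Definition~\ref{def:curved-Linf-conf} collapse to exactly the three differential graded Lie algebra axioms: $l_1\circ l_1=0$ (the $n=1$ relation), $l_1$ is a graded derivation of $l_2$ (the $n=2$ relation), and $l_2$ satisfies the graded Jacobi identity (the $n=3$ relation, using $l_0=l_3=0$). Thus setting $d:=l_1$ and $\{\cdot,\cdot\}:=l_2$ (up to the décalage shift) already produces a differential graded Lie algebra; what remains is to match $d,\{\cdot,\cdot\}$ with \eqref{differential} and \eqref{gradliebra}.

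Next I would compute $l_1(f)=[\pi+\rho,f]_{\mathrm{NR}}=\pi\circledcirc f+\rho\circledcirc f-(-1)^{p-1}(f\circledcirc\pi+f\circledcirc\rho)$ for $f\in C^p(\mathfrak{g},\mathfrak{h})$, using the circle product \eqref{circleproduct}, the bidegree bookkeeping of Definition~\ref{def:bidegree}, and the lift formulas \eqref{mapa},\eqref{mapb}. Because the lift of $\pi$ vanishes on any argument with nonzero $\mathfrak{h}$-component and the lift of $f$ vanishes unless all its arguments lie in $\mathfrak{g}$, only $\rho\circledcirc f$ and $f\circledcirc\pi$ survive: the term $f\circledcirc\pi$ reproduces the sum over pairs $i<j$ with the bracket $[x_i*x_j]_{\mathfrak{g}}$, and $\rho\circledcirc f$ — using $\hat\rho((0,u),(x,0))=-((12)\otimes_H\mathrm{id})\rho(x\otimes u)$ — reproduces the sum over single insertions of $\rho$; tracking the Koszul signs from the shuffles together with the $(-1)^{(p-1)(q-1)}$ sign in the $NR$-bracket yields \eqref{differential}. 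An entirely parallel analysis handles $l_2$: writing $F=[\mu,f]_{\mathrm{NR}}$, the bidegree count (and the fact that the lift of $\mu$ is supported on $\mathfrak{h}^{\otimes 2}$) shows $F=\mu\circledcirc f$ up to sign, that $F$ vanishes on all-$\mathfrak{g}$ arguments, and hence $l_2(f,g)=[F,g]_{\mathrm{NR}}=F\circledcirc g$ up to sign; since $F\circledcirc g$ feeds $g(\dots)\in\mathfrak{h}$ into the slot of $F$ that lands in $\mu=p[\cdot*\cdot]_{\mathfrak{h}}$, the output is $p[f(\dots)*g(\dots)]_{\mathfrak{h}}$ summed over $(p,q)$-shuffles, i.e.\ \eqref{gradliebra}, with the factor $p$ and the sign $(-1)^{pq+1}$ coming from $\mu$ and the décalage, respectively.

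Finally I would invoke the last assertion of Theorem~\ref{thm:pseudo-mc-pseudo}: $D\in C^1(\mathfrak{g},\mathfrak{h})$ is a Maurer-Cartan element, i.e.\ $l_0+l_1(D)+\tfrac12 l_2(D,D)=0$, which here reads $d(D)+\tfrac12\{D,D\}=0$, if and only if $D$ is a $\mathfrak{D}$-map of $(\mathfrak{g}\ltimes_\rho\mathfrak{h},\mathfrak{g},\mathfrak{h})$; and by the identity computed following Example~\ref{actionLie}, a $\mathfrak{D}$-map of this quasi-twilled Lie pseudoalgebra is precisely a crossed homomorphism of weight $p$ from $(\mathfrak{g},[\cdot*\cdot]_{\mathfrak{g}})$ to $(\mathfrak{h},[\cdot*\cdot]_{\mathfrak{h}})$. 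I expect the main obstacle to be the sign and index bookkeeping in the second step: translating the abstract derived brackets $[\pi+\rho,-]_{\mathrm{NR}}$ and $[[\mu,-]_{\mathrm{NR}},-]_{\mathrm{NR}}$ — which are sums over shuffles weighted by Koszul signs and the $(\sigma\otimes_H\mathrm{id})$-actions, including the transposition correction built into $\hat\rho$ — into the ``choose $i$'' and ``choose $i<j$'' form of \eqref{differential} and the normalized shuffle form of \eqref{gradliebra}, and pinning down exactly the signs $(-1)^{p+i}$, $(-1)^{p+i+j-1}$, $(-1)^{pq+1}$ and the factor $p$. The vanishing statements (which circle products die for bidegree reasons) are the conceptually essential input and follow cleanly from Definition~\ref{def:bidegree} and the two lemmas on the behaviour of the $NR$-bracket under the bigrading.
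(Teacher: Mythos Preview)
Your proposal is correct and follows exactly the approach the paper intends: the corollary is stated without proof, immediately after the sentence ``By applying Theorem~\ref{thm:pseudo-mc-pseudo} to appropriate examples, we obtain the controlling algebras for crossed homomorphisms, derivations, and Lie pseudoalgebra homomorphisms,'' so the paper's implicit argument is precisely the specialization you describe. Your identification of the structure maps ($\pi=[\cdot*\cdot]_{\mathfrak{g}}$, $\rho$ the action, $\mu=p[\cdot*\cdot]_{\mathfrak{h}}$, $\eta=\theta=0$), the resulting collapse of the curved $\mathfrak{L}_\infty$-structure to a dgLa, and the Maurer--Cartan characterization via the last part of Theorem~\ref{thm:pseudo-mc-pseudo} are all on target; if anything, your outline supplies more detail on the sign and bidegree bookkeeping than the paper itself provides.
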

\begin{prop}
Consider the quasi-twilled Lie pseudoalgebra $ (\mathfrak{g} \ltimes_\rho M, \mathfrak{g}, M) $ given in Example \ref{SEMDIRPRODLIEALG}, obtained from the semidirect product Lie pseudoalgebra $ \mathfrak{g} \ltimes_\rho M $. Then, 
$\left( \bigoplus_{n=1}^\infty C^n(\mathfrak{g}, M), d \right)$ is a cochain complex, where the differential $d$ is defined by Eq. \eqref{differential}. An $H$-linear map $ D \in C^1(\mathfrak{g}, M) $ is a derivation if and only if
$d(D) = 0$. Therefore, we say this cochain complex is controlling algebra of derivations from the Lie pseudoalgebra $ (\mathfrak{g}, [\cdot,\cdot]_\mathfrak{g}) $ to its representation space $( M, \rho)$.
\end{prop}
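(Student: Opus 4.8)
The plan is to obtain the statement by specializing Theorem~\ref{thm:pseudo-mc-pseudo} to the quasi-twilled Lie pseudoalgebra $(\mathfrak{g}\ltimes_\rho M,\mathfrak{g},M)$ of Example~\ref{SEMDIRPRODLIEALG}. The first step is to read off the five structure maps of the semidirect product from its pseudobracket $[(x,u)*(y,v)]_\rho=([x*y]_\mathfrak{g},\ \rho(x)*v-((12)\otimes_H\mathrm{id})\rho(y)*u)$: comparing with the decomposition $\Omega=\pi+\rho+\mu+\eta+\theta$ of Theorem~\ref{QTPC}, one reads $\pi=[\cdot*\cdot]_\mathfrak{g}$, $\rho$ equal to the given action, and $\mu=\eta=\theta=0$. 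Hence in the curved $V$-data of Theorem~\ref{thm:pseudo-mc-pseudo} we have $\Delta=\pi+\rho$, so the induced curved $\mathfrak{L}_\infty$-pseudoalgebra on $\mathfrak{K}=\bigoplus_{n\geq 0}C^{n+1}(\mathfrak{g},M)$ has operations $l_0=\theta=0$, $l_1(f)=[\pi+\rho,f]_{\mathrm{NR}}$, $l_2(f,g)=[[\mu+\eta,f]_{\mathrm{NR}},g]_{\mathrm{NR}}=0$, and $l_{k\geq 3}=0$.

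Since $l_0=0$, this is an honest $\mathfrak{L}_\infty$-pseudoalgebra, and since $l_k=0$ for every $k\neq 1$, the higher pseudo-Jacobi identity with $n=1$ collapses to $l_1\circ l_1=0$. Therefore $\big(\mathfrak{K},l_1\big)=\big(\bigoplus_{n\geq 1}C^{n}(\mathfrak{g},M),\,l_1\big)$ is a cochain complex. The remaining point is to identify $l_1=[\pi+\rho,-]_{\mathrm{NR}}$ with the operator $d$ of Eq.~\eqref{differential}: I would expand the Nijenhuis--Richardson bracket through the circle product~\eqref{circleproduct} and split it into its $\pi$-contribution and its $\rho$-contribution. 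Because $\pi$ only feeds the $\mathfrak{g}$-arguments and lands in $\mathfrak{g}$, while $\rho$ feeds one $\mathfrak{g}$-argument together with the single $M$-argument and lands in $M$, exactly the two sums of Eq.~\eqref{differential} survive; the shuffle permutations $\sigma_{1,i}$, $\sigma_{i,j}$ and the signs $(-1)^{p+i}$, $(-1)^{p+i+j-1}$ arise from the $(\sigma\otimes_H\mathrm{id})$-equivariance and the Koszul-sign bookkeeping in~\eqref{circleproduct}. Setting $d:=l_1$ then establishes that $\big(\bigoplus_{n\geq 1}C^{n}(\mathfrak{g},M),d\big)$ is a cochain complex with differential given by Eq.~\eqref{differential}.

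For the final assertion, a left $H$-linear map $D\in C^1(\mathfrak{g},M)$ is precisely a degree-$0$ element of $\mathfrak{K}$, so Theorem~\ref{thm:pseudo-mc-pseudo} tells us $D$ is a $\mathfrak{D}$-map of $(\mathfrak{g}\ltimes_\rho M,\mathfrak{g},M)$ if and only if $l_0+l_1(D)+\tfrac12 l_2(D,D)=0$; since $l_0=l_2=0$ this reduces to $d(D)=0$. On the other hand, substituting $\mu=\eta=\theta=0$ into the defining identity of a $\mathfrak{D}$-map (equivalently, by the computation already recorded in Example~\ref{SEMDIRPRODLIEALG}) shows that a $\mathfrak{D}$-map of $(\mathfrak{g}\ltimes_\rho M,\mathfrak{g},M)$ is exactly an $H$-linear map satisfying $(\mathrm{id}\otimes_H D)([x*y]_\mathfrak{g})=\rho(x\otimes D(y))-((12)\otimes_H\mathrm{id})\rho(y\otimes D(x))$, i.e., a derivation from $(\mathfrak{g},[\cdot*\cdot]_\mathfrak{g})$ to the representation $(M,\rho)$. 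Combining the two, $D$ is a derivation if and only if $d(D)=0$, which is the claimed characterization.

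I expect the only non-formal step to be the explicit identification of $[\pi+\rho,-]_{\mathrm{NR}}$ with Eq.~\eqref{differential}: tracking the shuffle decompositions, the sign conventions, and the $H$-linear permutation factors $(\sigma\otimes_H\mathrm{id})$ through the circle product~\eqref{circleproduct} is routine but requires careful bookkeeping. Everything else follows formally from Theorem~\ref{thm:pseudo-mc-pseudo} once one observes that the semidirect product forces $\mu=\eta=\theta=0$, which kills both the curvature $l_0$ and the higher brackets $l_{k\geq 2}$.
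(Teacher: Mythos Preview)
Your proposal is correct and follows precisely the approach the paper intends: the paper states this proposition without proof, prefacing it with the remark that the controlling algebras for crossed homomorphisms, derivations, and Lie pseudoalgebra homomorphisms are obtained ``by applying Theorem~\ref{thm:pseudo-mc-pseudo} to appropriate examples,'' and you carry out exactly that specialization, observing that $\mu=\eta=\theta=0$ kills $l_0$ and $l_2$ so that only the differential $l_1=d$ survives.
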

\begin{prop}
 Consider the quasi-twilled Lie pseudoalgebra $( \mathfrak{g}\boxplus \mathfrak{h}, \mathfrak{g}, \mathfrak{h})$ introduced in Example~\ref{directprod}, which arises from the direct product Lie pseudoalgebra $ \mathfrak{g} \boxplus \mathfrak{h} $. Then,
 $ \left(\bigoplus_{n=1}^\infty C^n( \mathfrak{g}, \mathfrak{h}), d, \{\cdot, \cdot\} \right)$
carries the structure of a differential graded Lie algebra (dgLa). The graded bracket $\{\cdot, \cdot\}$ is defined by Eq. \eqref{gradliebra}, and the differential $d$ is explicitly given by:
\begin{align*} 
 d(f)(x_1\otimes \dots\otimes x_{p+1}) &= \sum_{i<j} (-1)^{p+i+j-1} (\sigma_{i,j}\otimes_Hid)f([{x_i}* {x_j}]_{\mathfrak{g}}\otimes x_1\otimes \dots\otimes \hat{x_i}\otimes \dots\otimes \hat{x_j}\otimes \dots\otimes x_{p+1}),
\end{align*}
for all $ f \in C^p(\mathfrak{g}, \mathfrak{h})$, and $x_1, \dots, x_{p+1} \in \mathfrak{g}$.
This differential graded Lie algebra governs the space of Lie pseudoalgebra morphisms from $\mathfrak{g}$ to $\mathfrak{h}$. In particular, its Maurer–Cartan elements correspond precisely to such morphisms. Therefore, we refer to this dgLa as the controlling algebra for the Lie pseudoalgebra homomorphism. 
\end{prop}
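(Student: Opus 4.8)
The plan is to derive everything by specializing Theorem~\ref{thm:pseudo-mc-pseudo} to the direct product quasi-twilled Lie pseudoalgebra of Example~\ref{directprod}. First I would read off the structure maps: comparing the direct product pseudobracket $[(x,u)*(y,v)]_\oplus = ([x*y]_\mathfrak{g},\, [u*v]_\mathfrak{h})$ with the general form in Eq.~\eqref{eq:quasi-mult} gives $\pi = [\cdot*\cdot]_\mathfrak{g}$, $\mu = [\cdot*\cdot]_\mathfrak{h}$, and $\rho = \eta = \theta = 0$. Plugging these into Theorem~\ref{thm:pseudo-mc-pseudo} immediately yields, on $\mathfrak{K} = \bigoplus_{n\geq 1} C^n(\mathfrak{g},\mathfrak{h})$, the operations $l_0 = \theta = 0$, $l_1(f) = [\pi, f]_{\mathrm{NR}}$, $l_2(f,g) = [[\mu, f]_{\mathrm{NR}}, g]_{\mathrm{NR}}$, and $l_{k\geq 3} = 0$.

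Next I would note that, because $l_0 = 0$ and $l_k = 0$ for $k\geq 3$, the higher pseudo-Jacobi identities of Definition~\ref{def:curved-Linf-conf} collapse to exactly three relations: $l_1\circ l_1 = 0$ (the $n=1$ identity), the graded Leibniz rule of $l_1$ over $l_2$ (the $n=2$ identity), and the graded Jacobi identity for $l_2$ (the $n=3$ identity); all identities with $n\geq 4$ hold termwise. Hence $\bigl(\bigoplus_{n\geq 1}C^n(\mathfrak{g},\mathfrak{h}),\, d := l_1,\, \{\cdot,\cdot\} := l_2\bigr)$ is a differential graded Lie algebra, and what remains is to identify $d$ and $\{\cdot,\cdot\}$ with the stated formulas.

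Then I would unwind the Nijenhuis--Richardson bracket via Eq.~\eqref{circleproduct}, using the explicit lifts from Section~2: the lift $\hat f$ of $f\in C^m(\mathfrak{g},\mathfrak{h})$ applies $f$ to the $\mathfrak{g}$-components of its arguments and outputs in the $\mathfrak{h}$-slot, $\hat\pi$ applies $[\cdot*\cdot]_\mathfrak{g}$ to $\mathfrak{g}$-components, and $\hat\mu$ applies $[\cdot*\cdot]_\mathfrak{h}$ to $\mathfrak{h}$-components. Since each of these outputs lands in a single summand, several circle-product terms vanish: in $[\pi, f]_{\mathrm{NR}}$ the term $\pi\circledcirc f$ dies (an $\mathfrak{h}$-valued argument fed into $\hat\pi$ is annihilated), leaving $d(f) = \pm\, f\circledcirc\pi$, which after reindexing the shuffle sum over pairs $i<j$ is exactly the displayed coboundary with no action term (as forced by $\rho = 0$); and in $l_2(f,g)$ the inner bracket reduces to $\mu\circledcirc f$ (only the term with one $\mathfrak{h}$-slot surviving), so the double bracket $[[\mu,f]_{\mathrm{NR}},g]_{\mathrm{NR}}$ collapses to a single family of shuffle terms $\sum_\sigma \pm(\sigma\otimes_H \mathrm{id})[f(\cdots)*g(\cdots)]_\mathfrak{h}$, which is Eq.~\eqref{gradliebra}. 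The only delicate point is the bookkeeping of Koszul signs and shuffle signatures, which is routine once the conventions of Section~2 are fixed; this sign-careful matching is where the bulk of the work lies.

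Finally, the Maurer--Cartan statement is immediate: by Theorem~\ref{thm:pseudo-mc-pseudo}, $D\in C^1(\mathfrak{g},\mathfrak{h})$ is a $\mathfrak{D}$-map of $(\mathfrak{g}\boxplus\mathfrak{h},\mathfrak{g},\mathfrak{h})$ if and only if $l_0 + l_1(D) + \frac{1}{2} l_2(D,D) = 0$, which here reads $d(D) + \frac{1}{2}\{D,D\} = 0$. Evaluating on a pair $x\otimes y$ and using the skew-symmetry of $[\cdot*\cdot]_\mathfrak{h}$, the left-hand side equals $\pm\bigl((\mathrm{id}\otimes_H D)([x*y]_\mathfrak{g}) - [D(x)*D(y)]_\mathfrak{h}\bigr)$, so the Maurer--Cartan equation is exactly the identity $(\mathrm{id}\otimes_H D)([x*y]_\mathfrak{g}) = [D(x)*D(y)]_\mathfrak{h}$ characterizing a homomorphism of Lie pseudoalgebras (as already observed for this quasi-twilled Lie pseudoalgebra above). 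Therefore the Maurer--Cartan elements of this dgLa are precisely the Lie pseudoalgebra morphisms $\mathfrak{g}\to\mathfrak{h}$, which is the final claim.
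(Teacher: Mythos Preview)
Your proposal is correct and takes essentially the same approach as the paper: the paper presents this proposition (along with the two preceding it) as a direct specialization of Theorem~\ref{thm:pseudo-mc-pseudo} to the relevant example, without a separate written proof. Your outline in fact supplies more detail than the paper does, but the strategy---read off $\pi=[\cdot*\cdot]_\mathfrak{g}$, $\mu=[\cdot*\cdot]_\mathfrak{h}$, $\rho=\eta=\theta=0$, plug into Theorem~\ref{thm:pseudo-mc-pseudo}, and identify the resulting $l_1,l_2$ with the stated $d,\{\cdot,\cdot\}$---is exactly the intended one.
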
 
\noindent Let $ D : \mathfrak{g} \to \mathfrak{h} $ be a $\mathfrak{D}$-map of a quasi-twilled Lie pseudoalgebra $ (\mathfrak{G}, \mathfrak{g}, \mathfrak{h}) $. By Theorem \ref{thm:pseudo-mc-pseudo}, we obtain that $ D $ is a Maurer-Cartan element of the
$\left( \bigoplus_{n=0}^\infty C^{n+1}( \mathfrak{g}, \mathfrak{h}), l_0, l_1, l_2 \right). $
The twisted $ \mathfrak{L}_\infty $-pseudoalgebra structure on $ \bigoplus_{n=0}^\infty C^{n+1}( \mathfrak{g}, \mathfrak{h}) $ is given by
$$l_1^D(f) = l_1(f) + l_2(D, f),\qquad\qquad
l_2^D(f, g) = l_2(f, g), \qquad\qquad
l_{k \geq 3}^D = 0, $$
where $ f \in C^p( \mathfrak{g}, \mathfrak{h}) $ and $ g \in C^q( \mathfrak{g}, \mathfrak{h})$.

\noindent Now we are ready to give the $\mathfrak{L}_\infty$-pseudoalgebra structure that controls the deformations of the $\mathfrak{D}$-map~$D$.
\begin{thm}\label{thm:3.26}
Let $ D$ be a $\mathfrak{D}$-map of $ (\mathfrak{G}, \mathfrak{g}, \mathfrak{h}) $. Then for any left $H$-module homomorphism $ D':\mathfrak{g} \to \mathfrak{h} $, the sum $ D + D' $ is a $\mathfrak{D}$-map if and only if $ D' $ is a Maurer-Cartan element of the twisted $ \mathfrak{L}_\infty $-pseudoalgebra $\left( \bigoplus_{n=0}^\infty C^{n+1}(\mathfrak{g}, \mathfrak{h}), l_1^D, l_2^D \right),$ 
i.e., $ D' $ satisfies the Maurer-Cartan equation 
\begin{align*}
 l_1^D(D') + \frac{1}{2} l_2^D(D', D') = 0.
\end{align*}
\end{thm}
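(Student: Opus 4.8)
The plan is to reduce the statement to the general principle that twisting a curved $\mathfrak{L}_\infty$-pseudoalgebra by a Maurer--Cartan element $D$ turns Maurer--Cartan elements $D'$ of the twist into Maurer--Cartan elements $D+D'$ of the original structure. Concretely, I would start from Theorem~\ref{thm:pseudo-mc-pseudo}, which asserts that an $H$-module homomorphism $E:\mathfrak{g}\to\mathfrak{h}$ is a $\mathfrak{D}$-map of $(\mathfrak{G},\mathfrak{g},\mathfrak{h})$ if and only if it is a Maurer--Cartan element of $\bigl(\bigoplus_{n\geq0}C^{n+1}(\mathfrak{g},\mathfrak{h}),\,l_0,l_1,l_2\bigr)$, i.e.
\[
l_0+l_1(E)+\tfrac12\,l_2(E,E)=0 .
\]
Applying this with $E=D+D'$ reduces the whole claim to an algebraic expansion of the left-hand side.

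The key computational step is to expand $l_0+l_1(D+D')+\frac12\,l_2(D+D',D+D')=0$. Since $D$ and $D'$ both lie in $C^1(\mathfrak{g},\mathfrak{h})$, which is the degree-$0$ component of $\mathfrak{K}$, all Koszul signs in the graded pseudo-skew-symmetry of $l_2$ are trivial, so $l_2(D,D')=l_2(D',D)$; using $H^{\otimes k}$-linearity of the $l_k$ I would rewrite the equation as
\[
\bigl(l_0+l_1(D)+\tfrac12\,l_2(D,D)\bigr)\;+\;\bigl(l_1(D')+l_2(D,D')\bigr)\;+\;\tfrac12\,l_2(D',D')=0 .
\]
Now invoke the hypothesis that $D$ is a $\mathfrak{D}$-map: by Theorem~\ref{thm:pseudo-mc-pseudo} the first grouped term $l_0+l_1(D)+\frac12\,l_2(D,D)$ vanishes, which is exactly the assertion that the twisted structure is curvature-free ($l_0^D=0$). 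What survives is $l_1(D')+l_2(D,D')+\frac12\,l_2(D',D')=0$.

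Finally I would match the surviving terms against the explicit description of the twisted operations recalled just before the statement, namely $l_1^D(f)=l_1(f)+l_2(D,f)$, $l_2^D(f,g)=l_2(f,g)$ and $l_{k\geq3}^D=0$ (these follow from Definition~\ref{def:MC-twist} together with $l_{k\geq3}=0$). Then $l_1(D')+l_2(D,D')=l_1^D(D')$ and $l_2(D',D')=l_2^D(D',D')$, so the remaining identity is precisely $l_1^D(D')+\frac12\,l_2^D(D',D')=0$, the Maurer--Cartan equation of the twisted $\mathfrak{L}_\infty$-pseudoalgebra. Every step is an equivalence, so reading the chain backwards gives the converse. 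The only point requiring care, and the closest thing to an obstacle, is the bookkeeping: confirming that no higher brackets $l_{k\geq3}$ or $l_{k\geq3}^D$ ever contribute so that both Maurer--Cartan equations genuinely truncate at quadratic order, and that the degree-$0$ placement of $D$ and $D'$ suppresses all sign factors; once these are checked the computation is a short, sign-free expansion.
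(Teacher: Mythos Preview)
Your proposal is correct and follows essentially the same approach as the paper's own proof: apply Theorem~\ref{thm:pseudo-mc-pseudo} to $D+D'$, expand the Maurer--Cartan equation bilinearly, cancel the terms $l_0+l_1(D)+\tfrac12 l_2(D,D)$ using that $D$ is itself a $\mathfrak{D}$-map, and identify what remains with the twisted operations $l_1^D,l_2^D$. If anything, your discussion of the symmetry $l_2(D,D')=l_2(D',D)$ and the vanishing of higher brackets is more explicit than the paper's.
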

\begin{proof}
By Theorem \ref{thm:pseudo-mc-pseudo}, $ D + D' $ is a $\mathfrak{D}$-map if and only if it satisfies 
$$
l_0 + l_1(D + D') + \frac{1}{2} l_2(D + D', D + D') = 0.
$$
Expanding this expression yields 
$$
l_0 + l_1(D) + l_1(D') + \frac{1}{2} l_2(D, D) + l_2(D, D') + \frac{1}{2} l_2(D', D') = 0.
$$
Since $ D $ is itself a Maurer-Cartan element, satisfying 
 $
l_0 + l_1(D) + \frac{1}{2} l_2(D, D) = 0.$
Thus, we get
$$
l_1(D') + l_2(D, D') + \frac{1}{2} l_2(D', D') = 0.
$$
Using the twisted operations, it can be rewritten as
$$l_1^D(D') + \frac{1}{2} l_2^D(D', D') = 0.$$ 
This shows that $ D' $ is a Maurer-Cartan element of the twisted $ \mathfrak{L}_\infty $-pseudoalgebra 
$ \left( \bigoplus_{n=0}^\infty C^{n+1}(\mathfrak{g}, \mathfrak{h}), l_1^D, l_2^D \right).$ Thus, the \emph{deformation theory of $\mathfrak{D}$-maps} is governed by the twisted $ \mathfrak{L}_\infty $-pseudoalgebra structure. \end{proof}
 
\subsection{ Cohomology of $\mathfrak{D}$ maps of type I}

In this subsection, we introduce a cohomology theory for $\mathfrak{D}$-maps on quasi-twilled Lie pseudoalgebras. This cohomology unifies the cohomological frameworks of various operator structures, including Rota-Baxter operators, crossed homomorphisms, derivations, and Lie pseudoalgebra homomorphism.
\begin{lem}\label{lem:3.28}
Let $ (\mathfrak{G}, \mathfrak{g}, \mathfrak{h}) $ be a quasi-twilled Lie pseudoalgebra, and let $D: \mathfrak{g}\to \mathfrak{h}$ be a $\mathfrak{D}$-map. Then with $\pi^D $ and $\rho^D$ as defined in Theorem \ref{thm33}, the pair $ (\mathfrak{g}, \pi^D) $ forms a Lie pseudoalgebra and $ \rho^D$ defines a representation of this Lie pseudoalgebra on $ \mathfrak{h}$.
 \end{lem}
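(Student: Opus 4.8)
The plan is to exploit Theorem~\ref{thm33} (or equivalently the Maurer--Cartan characterization of Theorem~\ref{thm:pseudo-mc-pseudo}) to deduce the two assertions purely from the compatibility conditions PC1--PC8 applied to the twisted structure $\Omega^D$. First I would observe that by Proposition~\ref{proptransformation}, $\Omega^D = e^{-D}\circ \Omega\circ(e^D\otimes e^D)$ is again a Lie pseudoalgebra bracket on $\mathfrak{g}\boxplus\mathfrak{h}$, and by Theorem~\ref{thm33} the triple $((\mathfrak{G},\Omega^D),\mathfrak{g},\mathfrak{h})$ is quasi-twilled with component decomposition $\Omega^D=\pi^D+\rho^D+\mu^D+\eta^D+\theta^D$. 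Hence all of PC1--PC8 hold with the superscript-$D$ maps in place of the original ones. The key point is that $D$ being a $\mathfrak{D}$-map is, by the final clause of Theorem~\ref{thm33}, exactly the statement that $(\mathfrak{g},\pi^D)$ and $(\mathfrak{h},\mu^D)$ form a \emph{matched pair}; in particular this forces $\theta^D=0$. (This can also be seen directly: $\mathrm{Gr}(D)$ is a subpseudoalgebra by Proposition~\ref{PROPGRAPH}, and under the isomorphism $e^D$ it corresponds to $\mathfrak{g}\subseteq(\mathfrak{G},\Omega^D)$, so $\mathfrak{g}$ is a subpseudoalgebra of $\Omega^D$, which means precisely $\theta^D=0$.)

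Once $\theta^D=0$ is in hand, the rest is reading off the relevant PC identities. For the first claim, that $(\mathfrak{g},\pi^D)$ is a Lie pseudoalgebra: skew-symmetry of $\pi^D$ is immediate from the skew-symmetry part of the proof of Theorem~\ref{QTPC} (the component $\Omega^D((x,0),(y,0))=(\pi^D(x\otimes y),0)$ must be skew), and the Jacobi identity for $\pi^D$ is exactly PC2 for the twisted structure, which with $\theta^D=0$ collapses to
\[
\pi^D(x\otimes\pi^D(y\otimes z)) - \pi^D(\pi^D(x\otimes y)\otimes z) - ((12)\otimes_H\mathrm{id})\pi^D(y\otimes\pi^D(x\otimes z)) = 0,
\]
since the right-hand side of PC2 is a sum of $\eta^D\circ\theta^D$ terms. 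For the second claim, that $\rho^D$ is a representation of $(\mathfrak{g},\pi^D)$ on $\mathfrak{h}$: the representation axiom is precisely PC5 for the twisted structure, and again with $\theta^D=0$ the terms $\theta^D(x\otimes\eta^D(y\otimes w))$, $[\theta^D(x\otimes y)*w]_{\mathfrak{h}}$, and $((12)\otimes_H\mathrm{id})\theta^D(y\otimes\eta^D(x\otimes w))$ all vanish, leaving
\[
\rho^D(x\otimes\rho^D(y\otimes w)) - \rho^D(\pi^D(x\otimes y)\otimes w) - ((12)\otimes_H\mathrm{id})\rho^D(y\otimes\rho^D(x\otimes w)) = 0,
\]
which is exactly the statement that $\rho^D$ is a pseudoalgebra representation in $\mathcal{M}^*(H)$.

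The main obstacle, and the only genuinely substantive point, is justifying that $\theta^D$ vanishes precisely when $D$ is a $\mathfrak{D}$-map. I would handle this by specializing the explicit formula for $\theta^D$ given in Theorem~\ref{thm33}: writing out
\[
\theta^D(x\otimes y) = \theta(x\otimes y) + \rho(x\otimes D(y)) - ((12)\otimes_H\mathrm{id})\rho(y\otimes D(x)) - (\mathrm{id}\otimes_H D)\pi(x\otimes y) + \mu(D(x)\otimes D(y)) - (\mathrm{id}\otimes_H D)\eta(x\otimes D(y)) + ((12)\otimes_H D)\eta(y\otimes D(x)),
\]
and comparing with the defining identity of a $\mathfrak{D}$-map, one sees $\theta^D=0$ is literally that identity after moving the $(\mathrm{id}\otimes_H D)$-terms to the other side. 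So this step is a direct rearrangement rather than a deep argument. The remaining care is bookkeeping: making sure the $H^{\otimes k}$-linearity and the permutation/Koszul signs in PC2 and PC5 are transported correctly through the twist, which is guaranteed abstractly since $\Omega^D$ is literally a Lie pseudoalgebra bracket to which Theorem~\ref{QTPC} applies verbatim. I would therefore present the proof as: (1) invoke Theorem~\ref{thm33} to get $\Omega^D$ quasi-twilled and $\theta^D=0$; (2) quote PC2 for $\Omega^D$ to get the Lie pseudoalgebra structure on $(\mathfrak{g},\pi^D)$; (3) quote PC5 (equivalently PC7) for $\Omega^D$ to get that $\rho^D$ is a representation.
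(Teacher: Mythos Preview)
Your proposal is correct and follows essentially the same route as the paper: the paper's proof simply invokes Theorem~\ref{thm33} to conclude that $\big((\mathfrak{g},\pi^D),(\mathfrak{h},\mu^D);\rho^D,\eta^D\big)$ is a matched pair when $D$ is a $\mathfrak{D}$-map, and then reads off that $(\mathfrak{g},\pi^D)$ is a Lie pseudoalgebra with $\rho^D$ a representation. Your version unpacks this one level further by explicitly noting $\theta^D=0$ and then quoting PC2 and PC5 for the twisted structure, which is exactly what the matched-pair conclusion encodes; so the arguments are the same in substance, with yours being the more detailed rendering.
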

\begin{proof}
Since $ \big((\mathfrak{g}, \pi^D), (\mathfrak{h}, \mu^D); \rho^D, \eta^D \big)$ forms a matched pair of Lie pseudoalgebras (as shown in Theorem \ref{thm33}), it follows that $ (\mathfrak{g},\pi^D) $ is a Lie pseudoalgebra and $ \rho^D$ gives a representation on $ \mathfrak{h}$ over $ \mathfrak{g}$. This completes the proof.
\end{proof}
\noindent In the following, we define the Chevalley-Eilenberg coboundary operator associated with the representation $(\mathfrak{h}, \rho^D)$ by the map $d^D_{CE}: C^p(\mathfrak{g}, \mathfrak{h}) \to C^{p+1}(\mathfrak{g}, \mathfrak{h})$:
\begin{align}\label{CED}
 &d^D_{CE}(f)(x_1\otimes \dots\otimes x_{p+1}) \\&= \sum_{i=1}^{p+1} (-1)^{p+i}(\sigma_{1,i}\otimes_Hid) \rho^D(x_i, f(x_1\otimes \dots\otimes \hat{x_i}\otimes \dots\otimes x_{p+1})) \nonumber\\&+ \sum_{i<j} (-1)^{p+i+j-1} (\sigma_{i,j}\otimes_Hid)f([ \pi^D({x_i}\otimes {x_j})\otimes x_1\otimes \dots\otimes \hat{x_i}\otimes \dots\otimes \hat{x_j}\otimes \dots\otimes x_{p+1}),\nonumber\end{align}
 where $\sigma_{1,i}(h_1\otimes \cdots \otimes h_i\otimes \cdots\otimes h_n)=h_i\otimes h_1\otimes\cdots\otimes h_{i-1}\otimes h_{i+1}\otimes\cdots\otimes h_n $ and $\sigma_{i,j}(h_1\otimes \cdots\otimes h_n)=h_i\otimes h_j\otimes h_1\otimes \cdots\otimes h_{i-1}\otimes h_{i+1}\otimes\cdots\otimes h_{j-1}\otimes h_{j+1}\otimes\cdots\otimes h_n$ for all $h_k\in H.$

\noindent Expanding $ \rho^D$ and $ \pi^D$, we get:
\begin{align*} &d^D_{CE}(f)(x_1\otimes \dots\otimes x_{p+1}) \\&= 
\sum_{i=1}^{p+1} (-1)^{p+i} (\sigma_{1,i}\otimes_Hid)\rho(x_i, f(x_1\otimes \dots\otimes\hat{x_i}\otimes \dots\otimes x_{p+1})) \\&+
\sum_{i=1}^{p+1}(-1)^{p+i}(\sigma_{1,i}\otimes_Hid) \mu\big(D(x_i), f(x_1\otimes \dots\otimes \hat{x_i}\otimes \dots\otimes x_{p+1})\big)\\& 
- \sum_{i=1}^{p+1} (-1)^{p+i}(\sigma_{1,i}\otimes_Hid) D\big(\eta(x_i,f(x_1\otimes \dots\otimes \hat{x_i}\otimes \dots\otimes x_{p+1}))\big)
\\&+ \sum_{i<j} (-1)^{p+i+j-1}(\sigma_{i,j}\otimes_Hid) f(\pi({x_i}, {x_j}), x_1\otimes \dots\otimes \hat{x_i}\otimes \dots\otimes \hat{x_j}\otimes \dots\otimes x_{p+1}\big)\\&
+\sum_{i<j} (-1)^{p+i+j-1}(\sigma_{i,j}\otimes_Hid)f\big({\eta(x_i,D(x_j))} -((12)\otimes_Hid){\eta(x_j, D(x_i))} \otimes x_1\otimes \dots\otimes \hat{x_i}\otimes\dots\\&\qquad\qquad\qquad\qquad\qquad\qquad\qquad\qquad\qquad\qquad\qquad\qquad\qquad\qquad\qquad\qquad
\otimes \hat{x_j}\otimes \dots\otimes x_{p+1}\big)
\end{align*} This leads to the define the cohomology governing $\mathfrak{D}$-maps. Let $ D : \mathfrak{g}\to \mathfrak{h}$ be a $\mathfrak{D}$-map of type I of a quasi-twilled Lie pseudoalgebra $ (\mathfrak{G}, \mathfrak{g}, \mathfrak{h})$. The space of $ n $-cochains $ C^n(D) $ associated with a $\mathfrak{D}$-map is given as:\begin{align*}
 C^n(D) = 
\begin{cases}
0, & \text{for } n = 0, \\
C^0(\mathfrak{g}, \mathfrak{h}) = \mathfrak{h}, & \text{for } n = 1, \\
C^{n-1}(\mathfrak{g}, \mathfrak{h}), & \text{for } n \geq 2.
\end{cases}
\end{align*}
The cohomology of $\mathfrak{D}$-map is then defined as the cohomology of the cochain complex $$\left( C^*(D)=\bigoplus_{n=0}^\infty C^n(D), d^D_{CE}\right).$$ Let $ H^n(D) = \frac{Z^n(D)} {B^n(D)}$ is the $n$-th cohomology group. Here $ Z^n(D) $ denotes the space of $ n $-cocycles, i.e., having the elements $f\in C^n(D)$ such that $d^D_{CE}(f)=0$. And $ B^n(D) $ is the space of $ n $-coboundaries, i.e. having the elements $f\in C^n(D)$ such that $d^D_{CE}(f)= g$ for any $g\in C^{n-1}(D)$. The space of cochains, that satisfy cocycle condition is called \emph{closed} and that satisfy coboundary condition is called \emph{exact}. 
Furthermore, we give the explicit description of $1$-cocycle and $2$-cocycle in the Proposition \ref{prop12cocycle} 
\begin{prop}\label{prop12cocycle}
Let $ D : \mathfrak{g} \to \mathfrak{h} $ be a $\mathfrak{D}$-map of a quasi-twilled pseudoalgebra $ (\mathfrak{G}, \mathfrak{g}, \mathfrak{h}) $. Then 

\begin{enumerate}
 \item An element $ u \in \mathfrak{h} = C^1(D) $ is closed (i.e., a $1$-cocycle) if and only if it satisfies the following condition for all $ x \in \mathfrak{g} $ 
 \begin{align*}
\rho(x \otimes u) + \mu(D(x) \otimes u) - D(\eta(x \otimes u)) = 0 .
 \end{align*}

 \item A cochain $ f \in C^1(\mathfrak{g}, \mathfrak{h}) = C^2(D) $ is closed (i.e., a $2$-cocycle) if and only if   for all $ x, y \in \mathfrak{g} $, we have
 \begin{align*}
 &\rho(x \otimes f(y)) - ((12) \otimes_H \mathrm{id})\,\rho(y \otimes f(x)) + \mu(D(x) \otimes f(y)) \\
 &- ((12) \otimes_H \mathrm{id})\,\mu(D(y) \otimes f(x)) + D(\eta(y \otimes f(x))) - ((12) \otimes_H \mathrm{id})\,D(\eta(x \otimes f(y))) \\
 &- f\big( \pi(x \otimes y) + \eta(x \otimes D(y)) - ((12) \otimes_H \mathrm{id})\,\eta(y \otimes D(x)) \big) = 0.
 \end{align*}

\end{enumerate}
\end{prop}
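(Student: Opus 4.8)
The plan is to prove both statements by directly evaluating the Chevalley--Eilenberg coboundary operator $d^D_{CE}$ of Eq.~\eqref{CED} on cochains of degree $0$ and $1$, and then substituting the explicit formulas for $\rho^D$ and $\pi^D$ recorded in Theorem~\ref{thm33}. By Lemma~\ref{lem:3.28}, $(\mathfrak{g},\pi^D)$ is a Lie pseudoalgebra and $\rho^D$ is a representation of it on $\mathfrak{h}$, so $d^D_{CE}$ is genuinely a differential and it is meaningful to speak of its cocycles; the proposition merely unwinds the definition in the two lowest degrees.

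For part $(1)$, I would take $u \in \mathfrak{h} = C^0(\mathfrak{g},\mathfrak{h}) = C^1(D)$, i.e.\ specialize \eqref{CED} to $p = 0$. Then the Chevalley--Eilenberg bracket sum is empty, the single permutation $\sigma_{1,1}$ is the identity, and the only surviving term is $d^D_{CE}(u)(x) = -\,\rho^D(x \otimes u)$ for all $x \in \mathfrak{g}$. Hence $u$ is a $1$-cocycle if and only if $\rho^D(x\otimes u) = 0$ for every $x$, and substituting $\rho^D(x\otimes u) = \rho(x\otimes u) + \mu(D(x)\otimes u) - (\mathrm{id}\otimes_H D)(\eta(x\otimes u))$ from Theorem~\ref{thm33} gives exactly the stated condition.

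For part $(2)$, I would take $f \in C^1(\mathfrak{g},\mathfrak{h}) = C^2(D)$, i.e.\ specialize \eqref{CED} to $p = 1$. The first sum runs over $i \in \{1,2\}$: with $\sigma_{1,1} = \mathrm{id}$, $\sigma_{1,2} = (12)$ and signs $(-1)^{p+i}$ it contributes $\rho^D(x_1\otimes f(x_2)) - ((12)\otimes_H\mathrm{id})\,\rho^D(x_2\otimes f(x_1))$; the bracket sum reduces to the single index pair $(i,j) = (1,2)$ with sign $(-1)^{p+i+j-1} = -1$ and contributes $-\,f(\pi^D(x_1\otimes x_2))$. Setting $d^D_{CE}(f) = 0$ and then expanding $\rho^D$ and $\pi^D$ via Theorem~\ref{thm33} — using that $(\mathrm{id}\otimes_H D)$ commutes with the flip $(12)\otimes_H\mathrm{id}$, the composite being the $((12)\otimes_H D)$ of that theorem — yields the displayed $2$-cocycle identity after relabeling $x_1 = x$ and $x_2 = y$.

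The whole computation is mechanical. The only step requiring attention is the bookkeeping of the permutation twists $\sigma_{1,i}$ and $\sigma_{i,j}$ and of the signs $(-1)^{p+i}$, $(-1)^{p+i+j-1}$, together with the tacit use of the $H^{\otimes 2}$-linearity (conformality) of $\pi, \rho, \mu, \eta$ to ensure that each term remains a well-defined element of $H^{\otimes 2}\otimes_H\mathfrak{h}$. I do not expect any conceptual obstacle beyond this.
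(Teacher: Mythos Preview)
Your approach is correct and is exactly what the paper intends: the proposition is stated there without proof, immediately after the explicit formula \eqref{CED} for $d^D_{CE}$ and its expansion in terms of $\pi,\rho,\mu,\eta$, so the reader is meant to specialize that expansion to $p=0$ and $p=1$ just as you do. Your bookkeeping of the signs $(-1)^{p+i}$, $(-1)^{p+i+j-1}$ and of the two distinct permutations denoted $\sigma_{1,2}$ (the cyclic one in the $\rho^D$-sum, which is $(12)$ on two factors, versus the $\sigma_{i,j}$-type one in the $\pi^D$-sum, which is the identity on two factors) is correct, and the substitution of $\rho^D$ and $\pi^D$ from Theorem~\ref{thm33} is the only remaining step.

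One remark for part~(2): when you carry out the expansion literally, the two $D\circ\eta$ terms come out as $-D(\eta(x\otimes f(y))) + ((12)\otimes_H\mathrm{id})\,D(\eta(y\otimes f(x)))$, whereas the displayed identity in the proposition places the $(12)$-twist on the other summand. This is a cosmetic inconsistency in the paper's statement rather than a flaw in your argument; your computation is the one that matches \eqref{CED} and Theorem~\ref{thm33}.
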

We now give an intrinsic interpretation of this cohomology via the curved $ \mathfrak{L}_\infty$-pseudoalgebra introduced in previous subsections.
\begin{prop}\label{prop:3.24}
With the above notation, for any cochain $ f \in C^p(\mathfrak{g}, \mathfrak{h}) $, the following identity holds 
\begin{align*}
 l^D_1(f) = (-1)^{p-1} d^D_{CE}(f).
\end{align*}
\end{prop}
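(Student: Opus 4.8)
The plan is to verify the identity $l_1^D(f) = (-1)^{p-1} d_{CE}^D(f)$ by direct computation, unwinding both sides into the Nijenhuis--Richardson formalism. First I would recall from the previous subsection that $l_1^D(f) = l_1(f) + l_2(D,f)$, where $l_1(f) = [\pi + \rho, f]_{\mathrm{NR}}$ and $l_2(D, f) = [[\mu + \eta, D]_{\mathrm{NR}}, f]_{\mathrm{NR}}$ by Theorem~\ref{thm:pseudo-mc-pseudo}. So the left-hand side is $[\pi + \rho, f]_{\mathrm{NR}} + [[\mu + \eta, D]_{\mathrm{NR}}, f]_{\mathrm{NR}}$. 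I would then use the definition of $\pi^D$ and $\rho^D$ from Theorem~\ref{thm33}, namely $\pi^D(x\otimes y) = \pi(x\otimes y) + \eta(x\otimes D(y)) - ((12)\otimes_H\mathrm{id})\eta(y\otimes D(x))$ and $\rho^D(x\otimes v) = \rho(x\otimes v) + \mu(D(x)\otimes v) - (\mathrm{id}\otimes_H D)(\eta(x\otimes v))$, and observe that the Chevalley--Eilenberg operator $d_{CE}^D$ built from $(\pi^D, \rho^D)$ in Eq.~\eqref{CED} can be written (up to the Koszul sign $(-1)^{p-1}$ reflecting the degree conventions of the pseudo-operad) as the NR-bracket $[\pi^D + \rho^D, f]_{\mathrm{NR}}$ against these twisted structures.

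The heart of the argument is therefore the algebraic identity
\begin{align*}
[\pi^D + \rho^D, f]_{\mathrm{NR}} = [\pi + \rho, f]_{\mathrm{NR}} + [[\mu + \eta, D]_{\mathrm{NR}}, f]_{\mathrm{NR}},
\end{align*}
which I would establish by expanding $\pi^D + \rho^D$ in terms of $\pi, \rho, \mu, \eta$ and $D$ and using bilinearity of the NR-bracket together with the fact that $D \in C^1(\mathfrak{g}, \mathfrak{h})$ has bidegree $-1|1$. Concretely, the extra terms $\eta(-\otimes D(-))$ appearing in $\pi^D$ and the terms $\mu(D(-)\otimes-)$, $(\mathrm{id}\otimes_H D)(\eta(-\otimes-))$ appearing in $\rho^D$ should assemble exactly into $[\mu + \eta, D]_{\mathrm{NR}} = (\mu+\eta)\circledcirc D \pm D\circledcirc(\mu+\eta)$ when composed further with $f$; this is where the circle-product formula~\eqref{circleproduct} and Lemma~\ref{def:bidegree}-style bidegree bookkeeping enter. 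I expect to invoke the graded Jacobi identity for $[\cdot,\cdot]_{\mathrm{NR}}$ at most once, to rearrange $[[\mu+\eta, D]_{\mathrm{NR}}, f]_{\mathrm{NR}}$; the main point is that $[\mu+\eta, \mu+\eta]_{\mathrm{NR}}$-type terms and $[D, D]_{\mathrm{NR}}$-type terms do not appear because $l_2^D(f,f)$ is separated off on the right-hand side of the Maurer--Cartan equation and plays no role in the linear operator $l_1^D$.

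The main obstacle will be tracking the signs: there are two independent sign conventions in play — the Koszul/Chevalley--Eilenberg sign $(-1)^{p+i}$, $(-1)^{p+i+j-1}$ appearing in Eq.~\eqref{CED}, and the degree-shift sign in the NR-bracket $(-1)^{(p-1)(q-1)}$. Reconciling these is precisely what produces the overall factor $(-1)^{p-1}$ in the statement, and getting it right requires care about whether a $p$-cochain is viewed as having arity $p$ or degree $p-1$ in the graded Lie algebra $(C^*(\mathfrak{g}\boxplus\mathfrak{h}, \mathfrak{g}\boxplus\mathfrak{h}), [\cdot,\cdot]_{\mathrm{NR}})$. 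I would handle this by fixing the convention that $f \in C^p(\mathfrak{g},\mathfrak{h})$ sits in degree $p-1$, checking the identity on $f \in C^1(\mathfrak{g},\mathfrak{h}) = C^2(D)$ first against the explicit $2$-cocycle description in Proposition~\ref{prop12cocycle} (where $(-1)^{p-1} = 1$), and then observing that the general case follows by the same term-by-term matching since every operation is multilinear in its cochain arguments. Finally I would note that this identity immediately implies $(d_{CE}^D)^2 = 0$, since $l_1^D \circ l_1^D = 0$ follows from the higher Jacobi identity of the twisted $\mathfrak{L}_\infty$-pseudoalgebra (the $n=2$ relation together with $l_{k\geq 3}^D = 0$ and $l_0$ being annihilated), so the cohomology $H^*(D)$ is well-defined.
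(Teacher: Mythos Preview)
Your proposal is correct and follows essentially the same route as the paper: the paper's proof is the bare four-line chain $l_1^D(f) = l_1(f) + l_2(D,f) = [\pi+\rho,f]_{\mathrm{NR}} + [[\mu+\eta,D]_{\mathrm{NR}},f]_{\mathrm{NR}} = [\pi^D+\rho^D,f]_{\mathrm{NR}} = (-1)^{p-1}d_{CE}^D(f)$, with the key identity $\pi^D = \pi + [\eta,D]_{\mathrm{NR}}$, $\rho^D = \rho + [\mu,D]_{\mathrm{NR}}$ recorded in the remark immediately after. Your plan expands on exactly these steps with more detail on the sign bookkeeping and a sanity check in low degree, which is a reasonable elaboration of the same argument.
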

\begin{proof}
Using the definitions from Theorem \ref{thm:3.26}, we compute 
\begin{align*}
l^D_1(f) &= l_1(f) + l_2(D, f) \\
&= [\pi + \rho, f]_{\mathrm{NR}} + [[\mu + \eta, D]_{\mathrm{NR}}, f]_{\mathrm{NR}} \\
&= [\pi^D+ \rho^D,f]_{\mathrm{NR}} \\
&= (-1)^{p-1} d^D_{CE}(f).
\end{align*}
This completes the proof.
\end{proof}
\begin{rem} From the Proposition \ref{prop:3.24}, we observe that $\pi^D =\pi +[\eta, D]_{NR}$ and $\rho^D = \rho +[\mu, D]_{NR}$.
\end{rem}
Finally, we provide several important examples where this cohomology of $\mathfrak{D}$-maps applies to well-known operators.
\begin{ex} 
Consider the quasi-twilled Lie pseudoalgebra $ (\mathfrak{g}\boxplus \mathfrak{g}, \mathfrak{g}, \mathfrak{g}) $ from Example \ref{ggg}. Let $ D : \mathfrak{g}\to \mathfrak{g}$ be a modified $ r $-matrix. Then the induced pseudobracket on $ \mathfrak{g}$ is defined by
\begin{align*}
 [x* y]^D =[x* D(y)] - ((12)\otimes_H id)[y * D(x)], \quad \forall x, y \in \mathfrak{g}.
\end{align*}
Moreover, the corresponding pseudo representation $ \rho^D : \mathfrak{g}\otimes\mathfrak{g}\to H^{\otimes 2}\otimes_H \mathfrak{g} $ is given by 
\begin{align*}
 \rho^D(x\otimes y) = [D(x)* y] - D([x* y]), \quad \forall x,y \in \mathfrak{g}.
\end{align*}
The Chevalley-Eilenberg cohomology of this representation is taken to be the \emph{cohomology of the modified $ r $-matrix}.
\end{ex}
\begin{ex}
Let $ (\mathfrak{g}\ltimes_\rho \mathfrak{h}, \mathfrak{g}, \mathfrak{h}) $ be the action Lie pseudoalgebra from Example \ref{actionLie}, and let $ D : \mathfrak{g}\to \mathfrak{h} $ be a crossed homomorphism of weight $ p $. Then the pseudo representation $ \rho^D : \mathfrak{g}\otimes\mathfrak{h} \to H^{\otimes 2}\otimes_H \mathfrak{h} $ is given by 
\begin{align*}
 \rho^D(x\otimes u) = \rho(x\otimes u) + p \mu( D(x)\otimes u), \quad \forall u \in \mathfrak{h}, x \in \mathfrak{g}.
\end{align*}
The Chevalley-Eilenberg cohomology of $ \mathfrak{g}$ with coefficients in $ \mathfrak{h}$ is taken to be the \emph{cohomology of the crossed homomorphism of weight $p$}, see \cite{ZQX} for more details.
\end{ex}\begin{ex} Let $ (\mathfrak{g}\ltimes_\rho M, \mathfrak{g}, M) $ be the quasi-Twilled Lie pseudoalgebra corresponding to semi-direct product Lie pseudoalgebra from Example \ref{SEMDIRPRODLIEALG}. Consider a left $H$-modules homomorphism $D : \mathfrak{g}\to M $ is a derivation from the Lie pseudoalgebra $(\mathfrak{g}, [{\cdot}*{\cdot}]_\mathfrak{g})$ with respect to its representation $(M, \rho)$. Then the Chevalley-Eilenberg cohomology of $ (\mathfrak{g},[\cdot*\cdot])$ with coefficients in representation $(M, \rho)$ is taken to be the \emph{cohomology of the derivation.} See \cite{TFS} for more details. 
\end{ex}
\begin{ex} 
Let $ (\mathfrak{g}\boxplus \mathfrak{h}, \mathfrak{g}, \mathfrak{h}) $ be quasi twilled Lie pseudoalgebra corresponding to the direct product Lie pseudoalgebra from Example \ref{directprod}. If $ D :\mathfrak{g}\to \mathfrak{h}$ is the homomorphism of Lie pseudoalgebras, then the representation $ \rho^D: \mathfrak{g}\otimes\mathfrak{h}\to H^{\otimes2}\otimes_H\mathfrak{h}$ is defined by 
\begin{align*}
 \rho^D(x\otimes u) = p\mu (D(x)\otimes u), \quad \forall u \in \mathfrak{h}, x \in \mathfrak{g}.
\end{align*}
The corresponding Chevalley-Eilenberg cohomology is taken to be the \emph{cohomology of the Lie pseudoalgebra homomorphism}, see \cite{BDK}.
\end{ex}

\section{ The Controlling Algebras and Cohomologies of Deformation Maps of Type II}

In this section, $ (\mathfrak{G}, \mathfrak{g}, \mathfrak{h}) $ denotes a quasi-twilled pseudoalgebra over a cocommutative Hopf algebra $ H$, as defined in Section $2$. The pseudobracket on $ \mathfrak{G}$ is denoted by 
$ 
\Omega = \pi + \rho + \mu + \eta + \theta,
$ 
where the role of $\pi , \rho , \mu , \eta ,$ and $\theta $ is clear from the context.


\subsection{ Type II Deformation Maps and Operator Unification}

We now introduce the notion of a deformation map of type II (or a $\mathcal{D}$-map) in the context of quasi-twilled pseudoalgebras. These maps unify operator-driven structures, such as Relative Rota-Baxter operators, Twisted Rota-Baxter operators, Reynolds-type operators, and Deformation maps of matched pairs of pseudoalgebras.

\begin{defn}
Let $ (\mathfrak{G}, \mathfrak{g}, \mathfrak{h}) $ be a quasi-twilled pseudoalgebra over a cocommutative Hopf algebra $ H $. A \emph{deformation map of type II} (or simply a $\mathcal{D}$-map) is an $ H $-module homomorphism $T : \mathfrak{h} \to \mathfrak{g},$
satisfying the following identity for all $ u, v \in \mathfrak{h} $:
\begin{align*}
\pi(T(u) \otimes T(v)) + \eta(T(u) \otimes v) - ((12)\otimes_Hid)\eta(T(v) \otimes u) &= T\Big( \rho(T(u) \otimes v) - ((12)\otimes_Hid)\rho(T(v) \otimes u) \\&\qquad\qquad+ \mu(u \otimes v) + \theta(T(u) \otimes T(v)) \Big).\nonumber
\end{align*}The term $((12)\otimes_Hid)$ denotes the transposition action of the symmetric group $S_2$ on the pseudotensor components.
\end{defn}
\begin{prop}
Let $ (\mathfrak{G}, \mathfrak{g}, \mathfrak{h}) $ be a quasi-twilled pseudoalgebra over a cocommutative Hopf algebra $ H $, and let $ D : \mathfrak{g} \to \mathfrak{h} $ be an invertible $ H $-module homomorphism. Then $ D $ is a $\mathfrak{D}$-map if and only if its inverse $ T = D^{-1} : \mathfrak{h} \to \mathfrak{g} $ is a $\mathcal{D}$-map.
In other words, the following are equivalent 
\begin{enumerate}
 \item[1.] $ D \in \mathrm{Hom}_H(\mathfrak{g}, \mathfrak{h}) $ satisfies 
 \begin{align*}
 &D\Big( \pi(x \otimes y) + \eta(x \otimes D(y)) - ((12) \otimes_H \mathrm{id})\,\eta(y \otimes D(x)) \Big) \\&= \mu(D(x) \otimes D(y)) + \rho(x \otimes D(y)) - ((12) \otimes_H \mathrm{id})\,\rho(y \otimes D(x)) + \theta(x \otimes y),
 \end{align*}
 for all $ x, y \in \mathfrak{g} $, i.e., $ D $ is a $\mathfrak{D}$-map.
\item[2.] $ T = D^{-1} \in \mathrm{Hom}_H(\mathfrak{h}, \mathfrak{g}) $ satisfies 
 \begin{align*}
 &\pi(T(u) \otimes T(v)) + \eta(T(u) \otimes v) - ((12) \otimes_H \mathrm{id})\,\eta(T(v) \otimes u) \\&= T\Big( \rho(T(u) \otimes v) - ((12) \otimes_H \mathrm{id})\,\rho(T(v) \otimes u) + \mu(u \otimes v) + \theta(T(u) \otimes T(v)) \Big),
 \end{align*}
 for all $ u, v \in \mathfrak{h} $, i.e., $ T $ is a $\mathcal{D}$-map.
\end{enumerate} 
\end{prop}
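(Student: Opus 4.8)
The plan is to view $D$ and $T=D^{-1}$ as two coordinatizations of the same $H$-submodule of $\mathfrak{G}=\mathfrak{g}\boxplus\mathfrak{h}$ and to transport one defining identity into the other by a change of variables. First I would record that, since $D$ is an $H$-module isomorphism, its inverse $T$ is automatically $H$-linear: applying $D$ to $h\cdot T(a)$ and using the $H$-linearity of $D$ gives $D(h\cdot T(a))=h\cdot a$, hence $T(h\cdot a)=h\cdot T(a)$. Consequently the induced maps $\mathrm{id}^{\otimes 2}\otimes_H D:H^{\otimes 2}\otimes_H\mathfrak{g}\to H^{\otimes 2}\otimes_H\mathfrak{h}$ and $\mathrm{id}^{\otimes 2}\otimes_H T$ are mutually inverse bijections that commute with the transposition action $((12)\otimes_H\mathrm{id})$. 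Throughout, the expressions $D(\cdots)$ and $T(\cdots)$ occurring on the pseudotensor side of the two identities are read as $\mathrm{id}^{\otimes 2}\otimes_H D$ and $\mathrm{id}^{\otimes 2}\otimes_H T$.

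For $(1)\Rightarrow(2)$ I would substitute $x=T(u)$, $y=T(v)$ with $u,v\in\mathfrak{h}$ arbitrary into the $\mathfrak{D}$-map identity. Because $D\circ T=\mathrm{id}_{\mathfrak{h}}$, the inner occurrences $D(x),D(y)$ become $u,v$, so the identity reads
\begin{align*}
&(\mathrm{id}^{\otimes 2}\otimes_H D)\big(\pi(T(u)\otimes T(v))+\eta(T(u)\otimes v)-((12)\otimes_H\mathrm{id})\eta(T(v)\otimes u)\big)\\
&\quad=\mu(u\otimes v)+\rho(T(u)\otimes v)-((12)\otimes_H\mathrm{id})\rho(T(v)\otimes u)+\theta(T(u)\otimes T(v)).
\end{align*}
Applying $\mathrm{id}^{\otimes 2}\otimes_H T$ to both sides and using $(\mathrm{id}^{\otimes 2}\otimes_H T)\circ(\mathrm{id}^{\otimes 2}\otimes_H D)=\mathrm{id}$ (since $T\circ D=\mathrm{id}_{\mathfrak{g}}$) turns the left-hand side into precisely the left-hand side of the $\mathcal{D}$-map identity, and the right-hand side into $T(\cdots)$ as required. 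Since $D$ is bijective, the pair $(T(u),T(v))$ ranges over all of $\mathfrak{g}\times\mathfrak{g}$ as $(u,v)$ ranges over $\mathfrak{h}\times\mathfrak{h}$, so no loss of generality occurs and the argument is reversible: the symmetric substitution $u=D(x)$, $v=D(y)$ followed by applying $\mathrm{id}^{\otimes 2}\otimes_H D$ gives $(2)\Rightarrow(1)$.

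A cleaner conceptual route, which I would at least remark on, is via Proposition~\ref{PROPGRAPH}. When $D$ is invertible one has $\mathrm{Gr}(D)=\{(x,D(x))\mid x\in\mathfrak{g}\}=\{(T(u),u)\mid u\in\mathfrak{h}\}$ as $H$-submodules of $\mathfrak{G}$; write $\mathrm{Gr}(T)$ for the latter description. By Proposition~\ref{PROPGRAPH}, $D$ is a $\mathfrak{D}$-map if and only if $\mathrm{Gr}(D)$ is a Lie subpseudoalgebra of $\mathfrak{G}$. Establishing — by the same one-line graph computation, now reading the closure condition as a relation in $u,v$ rather than $x,y$ — that $T$ is a $\mathcal{D}$-map if and only if $\mathrm{Gr}(T)$ is a Lie subpseudoalgebra of $\mathfrak{G}$, the equivalence follows at once because the two subspaces coincide.

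I do not anticipate a genuine obstacle here; the argument is essentially a conjugation by $e^{0}$-type change of basis on $\mathfrak{G}$. The one point that must not be treated carelessly is the bookkeeping over $\otimes_H$: one needs that $\mathrm{id}^{\otimes 2}\otimes_H(-)$ is functorial and intertwines the transposition operators $((12)\otimes_H\mathrm{id})$, so that pre- and post-composition with it may be carried out term by term in the identities without disturbing the pseudotensor structure. Once this is verified, both implications are immediate.
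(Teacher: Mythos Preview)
Your argument is correct. The paper states this proposition without proof, so there is nothing to compare against; your direct substitution $x=T(u)$, $y=T(v)$ followed by application of $\mathrm{id}^{\otimes 2}\otimes_H T$ is exactly the expected verification, and the graph reformulation via Proposition~\ref{PROPGRAPH} (noting $\mathrm{Gr}(D)=\{(T(u),u)\mid u\in\mathfrak{h}\}$ when $D$ is invertible) is a clean alternative. The only care point you flagged---that $\mathrm{id}^{\otimes 2}\otimes_H(-)$ is functorial and commutes with $((12)\otimes_H\mathrm{id})$---is indeed the sole technicality, and it holds because $((12)\otimes_H\mathrm{id})$ acts only on the $H^{\otimes 2}$ tensor factors while $\mathrm{id}^{\otimes 2}\otimes_H T$ acts only on the rightmost factor.
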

\begin{ex}\label{ex:rel-RB-op-pseudo}
Let $ (\mathfrak{G}, \mathfrak{g}, \mathfrak{h}) = (\mathfrak{g} \ltimes_\rho \mathfrak{h}, \mathfrak{g}, \mathfrak{h}) $ be the quasi-twilled pseudoalgebra associated to the matched pair structure on $ \mathfrak{g} $ and $ \mathfrak{h} $, where $ \mathfrak{g} $ acts on $ \mathfrak{h} $ via $ \rho : \mathfrak{g} \otimes \mathfrak{h} \to H^{\otimes2} \otimes_H \mathfrak{h} $. Then a $\mathcal{D}$-map of type II is an $ H $-module homomorphism $ T : \mathfrak{h} \to \mathfrak{g} $ satisfying 
\begin{align*}
 &\pi(T(u) \otimes T(v)) + \eta(T(u) \otimes v) - ((12) \otimes_H \mathrm{id})\,\eta(T(v) \otimes u) \\&= T\Big( \rho(T(u) \otimes v) - ((12) \otimes_H \mathrm{id})\,\rho(T(v) \otimes u) + p \cdot \mu(u \otimes v) \Big),
\end{align*}
for all $ u, v \in \mathfrak{h} $, where $ p \in \mathbf{k} $.
This identity shows that $ T $ is a \emph{relative Rota-Baxter operator of weight $ p $} on $ \mathfrak{h} $ with respect to the representation $ \rho $ of $ \mathfrak{g} $ in the pseudotensor category $ \mathcal{M}^*(H) $. When $ p = 0 $, it reduces to a relative Rota-Baxter operator of weight zero; when $ p = 1 $, it corresponds to a standard Rota-Baxter operator.
\end{ex}

\begin{ex}\label{ex:O-operator-pseudo}
Consider the quasi-twilled pseudoalgebra $ (\mathfrak{G}, \mathfrak{g}, M) = (\mathfrak{g} \ltimes_\rho M, \mathfrak{g}, M) $, obtained from the semidirect product structure defined by an action $ \rho : \mathfrak{g} \otimes M \to H \otimes_H M $. In this case, a $\mathcal{D}$-map of type II is an $ H $-module homomorphism $ T : M \to \mathfrak{g} $ such that:
\begin{align*}
 &\pi(T(u) \otimes T(v)) + \eta(T(u) \otimes v) - ((12) \otimes_H \mathrm{id})\,\eta(T(v) \otimes u) \\&= T\Big( \rho(T(u) \otimes v) - ((12) \otimes_H \mathrm{id})\,\rho(T(v) \otimes u) \Big),
\end{align*}
for all $ u, v \in M $. This condition implies that $ T $ is a \emph{relative Rota-Baxter operator of weight $ 0 $} on $ \mathfrak{g} $ with respect to the module $ M $. Such an operator is also known as an \emph{$\mathcal{O}$-operator} in the pseudoalgebraic setting. These operators play a fundamental role in the construction of pre-Lie pseudoalgebras and integrable systems within the framework of pseudotensor categories (see \cite{AWMB}).
\end{ex}
\begin{ex}\label{exTRB}Consider the quasi-twilled pseudoalgebra $(\mathfrak{g} \ltimes_\rho^\omega M, \mathfrak{g}, M)$ given in Example \ref{excocycle}. In this case, a $\mathcal{D}$-map of $(\mathfrak{g} \ltimes_\rho^\omega M, \mathfrak{g}, M)$ is a
$H$-module homomorphism $T : M\to \mathfrak{g} $ such that
\begin{align*}
 {[T(u)* T(v)]}_\mathfrak{g} = T\big({\rho(T(u))}\otimes v - ((12) \otimes_H \mathrm{id}) {\rho(T(v))}\otimes u + \omega (T(u)\otimes T(v))_\mathfrak{g}\big), \quad \forall u, v \in \mathfrak{g}.
\end{align*} which implies that $T$ is a \emph{twisted Rota-Baxter operator} defined on the pseudoalgebra \cite{AYW}.
\end{ex}
\begin{ex}\label{exRO}
Consider the quasi-twilled pseudoalgebra $ (\mathfrak{g} \ltimes_{ad}^\omega \mathfrak{g}, \mathfrak{g}, \mathfrak{g}) $ given in Example \ref{exex}. In this case, a $\mathcal{D}$-map of $ (\mathfrak{g} \ltimes_{ad}^\omega \mathfrak{g}, \mathfrak{g}, \mathfrak{g}) $ is a $H$-module homomorphism $ T : \mathfrak{g} \to \mathfrak{g} $ such that
 \begin{align*}
{[{T(u)}* T(v)]}_\mathfrak{g} = T\big({[{T(u)}* v]}_\mathfrak{g} +{[{u}* T(v)]}_\mathfrak{g} + {[{T(u)}* T(v)]}_\mathfrak{g}\big), \quad \forall u, v \in \mathfrak{g}.
 \end{align*}
This identity implies that $T$ is a \emph{Reynolds operator} on $\mathfrak{g}$ (see \cite{YL}).
\end{ex}
\begin{ex}\label{ex:4.7}
Consider the quasi-twilled Lie pseudoalgebra $ (\mathfrak{g} \bowtie \mathfrak{h}, \mathfrak{g}, \mathfrak{h}) $ given in Example \ref{exmax}, obtained from a matched pair of pseudoalgebras. In this case, a $\mathcal{D}$-map of $ (\mathfrak{g} \bowtie \mathfrak{h}, \mathfrak{g}, \mathfrak{h}) $ is a $H$-modules homomorphism $ T : \mathfrak{h} \to \mathfrak{g} $ such that
\begin{align*}
 &{[T(u)* T(v)]}_\mathfrak{g} + \eta (u\otimes T(v)) - ((12) \otimes_H \mathrm{id}) \eta (v\otimes T(u))\\&
 = T\big({[u* v]}_\mathfrak{h} + \rho (T(u)\otimes v) -((12) \otimes_H \mathrm{id}) \rho (T(v)\otimes u)\big),
\end{align*} for all $u, v \in \mathfrak{h}$. This is indeed a \emph{deformation map of a matched pair of Lie pseudoalgebras} introduced in \cite{H}.
\end{ex}

At the end of this subsection, we illustrate the roles that $\mathcal{D}$-maps of type II play in the twisting theory.

\noindent Let $T : \mathfrak{h} \to \mathfrak{g}$ be a left $H$-module homomorphism. Suppose that $T^2 = 0$ and $X_T=[\cdot, T]_{\mathrm{NR}}$ is a derivation of the graded Lie pseudoalgebra
$\left(\bigoplus_{n=0}^{\infty} C^{n+1}(\mathfrak{g} \boxplus \mathfrak{h}, \mathfrak{g} \boxplus \mathfrak{h}), [\cdot, \cdot]_{\mathrm{NR}}\right).$ 
Then the exponential map $ e^{X_T}=e^{[\cdot, T]_{NR}}$ is an automorphism of this graded Lie pseudoalgebra. 

\begin{defn}\label{def:4.8} Consider a $\mathcal{D}$-map $T$.
The transformation $\O^T := e^{[ \cdot, T ]_{NR}} \O $ is called the \emph{twisting of $ \O $ by $ T $}.
\end{defn}
\noindent Parallel to Proposition \ref{proptransformation}, we have the following result.
\begin{lem}\label{lem:4.9}
With the above notations, $ \O^T = e^{-T} \circ \O \circ (e^{T} \otimes e^{T}) $ defines a Lie pseudoalgebra structure on $ \mathfrak{G}=\mathfrak{g}\boxplus \mathfrak{h} $.
\end{lem}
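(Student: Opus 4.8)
The plan is to run the argument of Proposition~\ref{proptransformation} (the Type~I case) with $D:\mathfrak{g}\to\mathfrak{h}$ replaced by $T:\mathfrak{h}\to\mathfrak{g}$. First I would view $T$ as a $1$-cochain $T\in C^1(\mathfrak{G},\mathfrak{G})$ by extending it by zero on $\mathfrak{g}$; then $T\circ T=0$ holds automatically, since $\mathrm{Im}(T)\subseteq\mathfrak{g}=\ker(T)$, so $e^{\pm T}=\mathrm{id}\pm T$ are mutually inverse $H$-module automorphisms of $\mathfrak{G}$ in the pseudotensor category $\mathcal{M}^*(H)$. Because $T^2=0$ also makes $X_T=[\cdot,T]_{\mathrm{NR}}$ locally nilpotent on $\mathfrak{J}=\bigoplus_{n\ge 0}C^{n+1}(\mathfrak{G},\mathfrak{G})$, the exponential $e^{X_T}$ is a well-defined automorphism of the graded Lie pseudoalgebra $(\mathfrak{J},[\cdot,\cdot]_{\mathrm{NR}})$, as already recorded before Definition~\ref{def:4.8}, and $\O^T=e^{X_T}\O$ lies in the same graded piece $C^2(\mathfrak{G},\mathfrak{G})$ as $\O$.

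The main computation is to establish the closed formula $\O^T=e^{-T}\circ\O\circ(e^T\otimes e^T)$. Because $\O\in C^2(\mathfrak{G},\mathfrak{G})$ is skew-symmetric, the circle product \eqref{circleproduct} against the $1$-cochain $T$ collapses, using the two-term shuffle set $S_{1,1}=\{\mathrm{id},(12)\}$ and the skew-symmetry of $\O$, to
\[
\O\circledcirc T=\O\circ(T\otimes\mathrm{id})+\O\circ(\mathrm{id}\otimes T),\qquad T\circledcirc\O=T\circ\O,
\]
so that $[\O,T]_{\mathrm{NR}}=\O\circ(T\otimes\mathrm{id}+\mathrm{id}\otimes T)-T\circ\O$. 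Iterating $X_T$ and repeatedly invoking $T\circ T=0$ — whence $(T\otimes\mathrm{id}+\mathrm{id}\otimes T)^{\circ 2}=2\,(T\otimes T)$, one further application of $X_T$ kills $T\circ\O\circ(T\otimes T)$, and so on — one checks that the series $\sum_{n\ge 0}\frac1{n!}X_T^n(\O)$ has only the terms $n=0,1,2,3$ and telescopes to $(\mathrm{id}-T)\circ\O\circ\bigl((\mathrm{id}+T)\otimes(\mathrm{id}+T)\bigr)=e^{-T}\circ\O\circ(e^T\otimes e^T)$; this is precisely the bookkeeping of \cite[Lemma~4.2]{YL} and of Proposition~\ref{proptransformation}.

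Granting the formula, the conclusion is formal and admits two equivalent readings. Since $X_T$ is a derivation of $(\mathfrak{J},[\cdot,\cdot]_{\mathrm{NR}})$ and $e^{X_T}$ is an automorphism, $[\O^T,\O^T]_{\mathrm{NR}}=e^{X_T}\bigl([\O,\O]_{\mathrm{NR}}\bigr)=0$, because $\O$ is a Maurer--Cartan element of $\bigl(C^*(\mathfrak{G},\mathfrak{G}),[\cdot,\cdot]_{\mathrm{NR}}\bigr)$ by the computation in Section~2; and since $\O^T\in C^2(\mathfrak{G},\mathfrak{G})$ is automatically a skew-symmetric $H^{\otimes 2}$-linear map $\mathfrak{G}^{\otimes 2}\to H^{\otimes 2}\otimes_H\mathfrak{G}$, the vanishing $[\O^T,\O^T]_{\mathrm{NR}}=0$ is exactly the pseudo-Jacobi identity, so $(\mathfrak{G},\O^T)$ is a Lie pseudoalgebra. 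Alternatively, the formula $\O^T=e^{-T}\circ\O\circ(e^T\otimes e^T)$ exhibits $\O^T$ as the transport of the bracket $\O$ along the $H$-module isomorphism $e^T$, which again yields a Lie pseudoalgebra structure and shows that $e^T:(\mathfrak{G},\O^T)\to(\mathfrak{G},\O)$ is an isomorphism of Lie pseudoalgebras.

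The only step requiring genuine calculation is the telescoping in the second paragraph: one must keep track of the shuffle sums and the $(\sigma\otimes_H\mathrm{id})$ prefactors in \eqref{circleproduct}, together with the sign $(-1)^{(p-1)(q-1)}$ in the $NR$-bracket, and verify that the combinatorial coefficients produced by iterating $X_T$ cancel against the factorials $1/n!$ so that the series collapses to the stated conjugation. Everything else — the local nilpotence of $X_T$, the fact that $e^{X_T}$ is an automorphism, and the passage from the Maurer--Cartan property of $\O^T$ to its being a Lie pseudoalgebra bracket — is inherited verbatim from the Type~I treatment.
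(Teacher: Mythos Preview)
Your proposal is correct and follows exactly the route the paper indicates: the paper proves Lemma~\ref{lem:4.9} only by declaring it ``parallel to Proposition~\ref{proptransformation}'' (which in turn defers to \cite[Lemma~4.2 and Proposition~4.3]{YL}), and you have supplied precisely that parallel argument with the roles of $\mathfrak{g}$ and $\mathfrak{h}$ swapped. Your telescoping computation of $e^{X_T}\Omega$ through $n=0,1,2,3$ and the two alternative conclusions (Maurer--Cartan transport under the automorphism $e^{X_T}$, or structure transport along $e^T$) are both sound and more explicit than what the paper records.
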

\noindent Further, $ \O^T$ admits a unique decomposition into six distinct components:
$ \O^T = \pi^T + \rho^T+ \mu^T + \eta^T + \theta^T + \xi^T, $ 
where $\pi, \rho, \mu, \eta, \theta$ have been defined in earlier sections, and $\xi^T \in C^2(\mathfrak{h}, \mathfrak{g})$ is a newly introduced cochain.
We now state the main result of this subsection.

\begin{thm}\label{thm:4.10-pseudo}
Let $ \Omega = \pi + \rho + \mu + \eta + \theta $ be the pseudobracket of a quasi-twilled pseudoalgebra $ (\mathfrak{G}, \mathfrak{g}, \mathfrak{h}) $. Define its $ T $-twisted counterpart $ \Omega^T $ as above. Then the components of $ \Omega^T $ are explicitly given by:
\begin{align*}
\pi^T(x \otimes y) &= \pi(x \otimes y) - T(\theta(x \otimes y)), \\
\rho^T(x \otimes v) &= \rho(x \otimes v) - \theta(T(v) \otimes x), \\
\mu^T(u \otimes v) &= \mu(u \otimes v) + \rho(T(u) \otimes v) - ((12) \otimes_H \mathrm{id})\,\rho(T(v) \otimes u) + \theta(T(u) \otimes T(v)), \\
\eta^T(x \otimes v) &= \eta(x \otimes v) - \pi(T(v) \otimes x) - T(\rho(x \otimes v)) + T(((12) \otimes_H \mathrm{id})\,\theta(T(v) \otimes x)), \\
\theta^T(x \otimes y) &= \theta(x \otimes y), \\
\xi^T(u \otimes v) &= \pi(T(u) \otimes T(v)) + \eta(T(u) \otimes v) - ((12) \otimes_H \mathrm{id})\,\eta(T(v) \otimes u) \\
&\quad - T\Big( \mu(u \otimes v) + \rho(T(u) \otimes v) - ((12) \otimes_H \mathrm{id})\,\rho(T(v) \otimes u) - ((12) \otimes_H \mathrm{id})\,\theta(T(v) \otimes T(u)) \Big),
\end{align*}
for all $ x, y \in \mathfrak{g} $, $ u, v \in \mathfrak{h} $, where $ ((12) \otimes_H \mathrm{id}) $ denotes the action of the transposition $ (12) \in S_2 $ on the tensor product in $ \mathcal{M}^*(H) $.
This decomposition implies that an $ H $-module homomorphism $ T : \mathfrak{h} \to \mathfrak{g} $ is a $\mathcal{D}$-map of type II if and only if $ (\mathfrak{G}, \Omega^T) $ defines a quasi-twilled pseudoalgebra structure on $ \mathfrak{G} $.
\end{thm}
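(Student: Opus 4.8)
The plan is to compute the twisted bracket $\Omega^T$ explicitly by conjugation, separate it into its six homogeneous components, and then observe that being quasi-twilled is equivalent to the vanishing of the single new component $\xi^T$. First I would regard $T$ as the $H$-linear endomorphism of $\mathfrak{G}=\mathfrak{g}\boxplus\mathfrak{h}$ that is $T$ on $\mathfrak{h}$ and $0$ on $\mathfrak{g}$, so that $T\circ T=0$ as an endomorphism of $\mathfrak{G}$; hence $e^{T}=\mathrm{id}+T$ and $e^{-T}=\mathrm{id}-T$ are $H$-linear automorphisms of $\mathfrak{G}$. By Lemma~\ref{lem:4.9} (the type II analogue of Proposition~\ref{proptransformation}), $\Omega^T=e^{-T}\circ\Omega\circ(e^{T}\otimes e^{T})$ is already a Lie pseudoalgebra bracket on $\mathfrak{g}\boxplus\mathfrak{h}$. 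So $(\mathfrak{G},\Omega^T)$ is automatically a Lie pseudoalgebra that decomposes as $\mathfrak{g}\boxplus\mathfrak{h}$ as left $H$-modules, and it only remains to (a) read off the six structure maps of $\Omega^T$ and (b) decide when $\mathfrak{h}$ is a subpseudoalgebra of $(\mathfrak{G},\Omega^T)$.

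For (a) I would substitute $e^{T}(x_i,u_i)=(x_i+T(u_i),\,u_i)$ into the formula for $\Omega$ from Theorem~\ref{QTPC} (Eq.~\eqref{eq:quasi-mult}), expand every occurrence of $\pi,\rho,\mu,\eta,\theta$ by $H^{\otimes2}$-bilinearity, and then apply $e^{-T}=\mathrm{id}-T$, which on $H^{\otimes2}\otimes_H\mathfrak{G}$ subtracts $(\mathrm{id}\otimes_H T)$ of the $\mathfrak{h}$-valued part from the $\mathfrak{g}$-valued part. Since any $H^{\otimes2}$-linear skew-symmetric $2$-cochain on $\mathfrak{g}\boxplus\mathfrak{h}$ is determined by its restrictions to inputs in $\mathfrak{g}\otimes\mathfrak{g}$, $\mathfrak{g}\otimes\mathfrak{h}$, $\mathfrak{h}\otimes\mathfrak{h}$ together with the projection of the output to $\mathfrak{g}$ or $\mathfrak{h}$ (this is the bidegree decomposition of Definition~\ref{def:bidegree}, with the new feature that a $\mathfrak{h}\otimes\mathfrak{h}\to\mathfrak{g}$ piece may now appear), I would evaluate $\Omega^T$ on the three input types. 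Inputs $(x,0),(y,0)$ give $\pi^T$ ($\mathfrak{g}$-part) and $\theta^T$ ($\mathfrak{h}$-part), and one reads off at once that $\theta^T=\theta$ and $\pi^T(x\otimes y)=\pi(x\otimes y)-(\mathrm{id}\otimes_H T)(\theta(x\otimes y))$. Inputs $(x,0),(0,v)$ give $\eta^T$ and $\rho^T$. Inputs $(0,u),(0,v)$, using $e^T(0,u)=(T(u),u)$, give the $\mathfrak{h}$-part
\[
\mu^T(u\otimes v)=\mu(u\otimes v)+\rho(T(u)\otimes v)-((12)\otimes_H\mathrm{id})\rho(T(v)\otimes u)+\theta(T(u)\otimes T(v))
\]
and the $\mathfrak{g}$-part
\[
\xi^T(u\otimes v)=\pi(T(u)\otimes T(v))+\eta(T(u)\otimes v)-((12)\otimes_H\mathrm{id})\eta(T(v)\otimes u)-(\mathrm{id}\otimes_H T)\bigl(\mu^T(u\otimes v)\bigr).
\]
The displayed normal forms for $\pi^T,\rho^T,\eta^T,\mu^T,\theta^T,\xi^T$ in the statement then follow after invoking the skew-symmetry of $\pi,\rho,\eta,\theta$ and relation (PC1) for $\mu$ to move the transposition factors $((12)\otimes_H\mathrm{id})$ into the stated positions.

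For (b), since $(\mathfrak{G},\Omega^T)$ is a Lie pseudoalgebra on $\mathfrak{g}\boxplus\mathfrak{h}$ by Lemma~\ref{lem:4.9}, the triple $((\mathfrak{G},\Omega^T),\mathfrak{g},\mathfrak{h})$ is a quasi-twilled pseudoalgebra if and only if $\mathfrak{h}$ is a Lie subpseudoalgebra, i.e.\ $\Omega^T((0,u),(0,v))\in\{0\}\boxplus\mathfrak{h}$ for all $u,v\in\mathfrak{h}$, i.e.\ the $\mathfrak{g}$-component $\xi^T$ vanishes. By the formulas for $\xi^T$ and $\mu^T$ above, $\xi^T=0$ is precisely
\[
\pi(T(u)\otimes T(v))+\eta(T(u)\otimes v)-((12)\otimes_H\mathrm{id})\eta(T(v)\otimes u)=T\bigl(\rho(T(u)\otimes v)-((12)\otimes_H\mathrm{id})\rho(T(v)\otimes u)+\mu(u\otimes v)+\theta(T(u)\otimes T(v))\bigr),
\]
which is exactly the defining identity of a $\mathcal{D}$-map of type II, so the equivalence follows. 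The only genuinely delicate part is step (a): tracking the transposition operators $((12)\otimes_H\mathrm{id})$ and the action of $\mathrm{id}\otimes_H T$ through the balanced tensor products $\otimes_H$, and applying skew-symmetry at the right places to recover the stated normal forms of $\pi^T,\rho^T,\eta^T,\mu^T$. Conceptually there is no obstacle, as everything reduces to bilinear bookkeeping once Lemma~\ref{lem:4.9} is in hand, and step (b) is then immediate from the decomposition produced in step (a).
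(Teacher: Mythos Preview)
Your proposal is correct and follows essentially the same approach as the paper, which simply states that the result ``follows from a direct but tedious computation using the properties of the Nijenhuis--Richardson bracket and the decomposition of $\Omega$'' and omits the details. You have in fact supplied those omitted details: using the conjugation formula of Lemma~\ref{lem:4.9}, evaluating on the three input types, and reading off the six bidegree components is exactly the intended computation, and your observation that quasi-twilled reduces to $\xi^T=0$ is the right way to obtain the final equivalence.
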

\begin{proof}
This follows from a direct but tedious computation using the properties of the Nijenhuis-Richardson bracket and the decomposition of $\O $. The details involve verifying that the twisted components satisfy the axioms of a quasi-twilled Lie pseudoalgebra. We omit the computation for brevity. \end{proof}
\subsection{ The Controlling Algebra of $\mathcal{D}$-Maps of type II} In this subsection, we describe the controlling algebra of deformation maps of type II, which takes the form of an $ \mathfrak{L}_\infty$-pseudoalgebra. A significant byproduct of this construction is the corresponding controlling algebra for deformation maps of matched pairs of Lie pseudoalgebras. 
\begin{thm}\label{thm:4.11-pseudo}
Let $ (\mathfrak{G}, \mathfrak{g}, \mathfrak{h}) $ be a quasi-twilled pseudoalgebra over a cocommutative Hopf algebra $ H $. Then there exists a curved $ V $-data $ (\mathfrak{J}, \mathfrak{K}, P, \Delta) $, where $ \mathfrak{J} = \bigoplus_{n \geq 0} C^{n+1}(\mathfrak{g} \boxplus \mathfrak{h}, \mathfrak{g} \boxplus \mathfrak{h}) $ is the graded $ H $-module of pseudotensor multi-linear maps, equipped with the Nijenhuis-Richardson bracket $ [\cdot, \cdot]_{\mathrm{NR}} $, and
 $ \mathfrak{K} = \bigoplus_{n \geq 0} C^{n+1}(\mathfrak{h}, \mathfrak{g}) $ is an abelian graded Lie pseudosubalgebra of $ \mathfrak{J} $. The map $ P : \mathfrak{J} \to \mathfrak{J} $ is the $ H $-linear projection onto $ \mathfrak{K} $,
 and $ \Delta = \pi + \rho + \mu + \eta + \theta \in \mathfrak{J}^1 $ satisfies the Maurer-Cartan condition $ [\Delta, \Delta]_{\mathrm{NR}} = 0 $. This $ V $-data induces a curved $ \mathfrak{L}_\infty $-pseudoalgebra structure on $ \mathfrak{K} $, with operations:
$$
l_1(f) = [\mu + \eta, f]_{\mathrm{NR}}, \quad
l_2(f, g) = [[\pi + \rho, f]_{\mathrm{NR}}, g]_{\mathrm{NR}}, \quad
l_3(f, g, h) = [[[ \theta, f ]_{\mathrm{NR}}, g ]_{\mathrm{NR}}, h ]_{\mathrm{NR}}, \quad\text{ and }l_{k \geq 4 } = 0 .
$$
\noindent Furthermore, an $ H $-module homomorphism $ T : \mathfrak{h} \to \mathfrak{g} $ is a $\mathcal{D}$-map of type II if and only if $ T $ is a Maurer-Cartan element of this curved $ \mathfrak{L}_\infty $-pseudoalgebra.
\end{thm}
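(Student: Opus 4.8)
The plan is to run Voronov's derived-bracket construction (Theorem~\ref{thmvor}) on the same ambient graded Lie algebra $\mathfrak{J}=\bigoplus_{n\geq 0}C^{n+1}(\mathfrak{g}\boxplus\mathfrak{h},\mathfrak{g}\boxplus\mathfrak{h})$ with the Nijenhuis--Richardson bracket that was used for Type~I in Theorem~\ref{thm:pseudo-mc-pseudo}, but now with $\mathfrak{K}=\bigoplus_{n\geq 0}C^{n+1}(\mathfrak{h},\mathfrak{g})$ playing the role of the abelian subalgebra. First I would check the $V$-data axioms. That $(\mathfrak{J},[\cdot,\cdot]_{\mathrm{NR}})$ is a graded Lie algebra is recorded in Section~2. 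The cochains in $\mathfrak{K}$ are exactly the homogeneous pieces of bidegree $(-1)|l$, i.e. the maps sending an all-$\mathfrak{h}$ argument to a $\mathfrak{g}$-value, so $\mathfrak{K}$ is an abelian graded Lie subalgebra by the vanishing lemma for cochains of bidegree $(-1)|\ast$; and $\ker P$, the span of all homogeneous pieces of bidegree $k|l$ with $k\geq 0$, is closed under $[\cdot,\cdot]_{\mathrm{NR}}$ by bidegree-additivity. Finally $\Delta=\pi+\rho+\mu+\eta+\theta$ satisfies $[\Delta,\Delta]_{\mathrm{NR}}=0$: this is exactly the Maurer--Cartan characterisation of the quasi-twilled bracket $\Omega$ established right after Theorem~\ref{QTPC}. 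Since each of $\pi,\rho,\mu,\eta,\theta$ has nonnegative first bidegree coordinate, $\Delta\in\ker P$, so $(\mathfrak{J},\mathfrak{K},P,\Delta)$ is a (curved) $V$-data with vanishing curvature $l_0=P(\Delta)=0$.

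By Theorem~\ref{thmvor} this equips $\mathfrak{K}$ with an $\mathfrak{L}_\infty$-pseudoalgebra structure whose operations are $l_k(x_1,\dots,x_k)=P\bigl([\cdots[[\Delta,x_1]_{\mathrm{NR}},x_2]_{\mathrm{NR}},\cdots]_{\mathrm{NR}},x_k]_{\mathrm{NR}}\bigr)$. To identify them I would carry out the same bidegree bookkeeping as in the Type~I proof, with the roles of $\mathfrak{g}$ and $\mathfrak{h}$ exchanged: bracketing a $\Delta$-component with an element of $\mathfrak{K}$ (a map $\mathfrak{h}\to\mathfrak{g}$) either upgrades an $\mathfrak{h}$-valued output to a $\mathfrak{g}$-valued one or turns one $\mathfrak{g}$-typed input slot into an $\mathfrak{h}$-typed one, and $P$ retains only the terms whose inputs are all $\mathfrak{h}$-typed and whose output is $\mathfrak{g}$-typed. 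Counting the conversions required, $\mu$ and $\eta$ land in $\mathfrak{K}$ after one bracket, $\pi$ and $\rho$ after two, and $\theta$ (two $\mathfrak{g}$-inputs and an $\mathfrak{h}$-output to correct) after three, with nothing requiring four; hence $l_1(f)=[\mu+\eta,f]_{\mathrm{NR}}$, $l_2(f,g)=[[\pi+\rho,f]_{\mathrm{NR}},g]_{\mathrm{NR}}$, $l_3(f,g,h)=[[[\theta,f]_{\mathrm{NR}},g]_{\mathrm{NR}},h]_{\mathrm{NR}}$ and $l_{k\geq 4}=0$, matching the statement. The spurious contributions at each stage either leave $\mathfrak{K}$ and are killed by $P$, or vanish identically because $\mathfrak{K}$ is abelian (for instance $[[\mu,f]_{\mathrm{NR}},g]_{\mathrm{NR}}=0$ since $[\mu,f]_{\mathrm{NR}}\in\mathfrak{K}$).

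Finally, because $l_{k\geq 4}=0$ the Maurer--Cartan equation for a degree-zero element $T\in C^1(\mathfrak{h},\mathfrak{g})$ reduces to the finite identity $l_1(T)+\tfrac12 l_2(T,T)+\tfrac16 l_3(T,T,T)=0$. Evaluating the three terms on $u,v\in\mathfrak{h}$ by means of the circle product \eqref{circleproduct} and the lift conventions of Section~2, I expect $l_1(T)$ to give $\eta(T(u)\otimes v)-((12)\otimes_H\mathrm{id})\,\eta(T(v)\otimes u)-T\bigl(\mu(u\otimes v)\bigr)$, the term $\tfrac12 l_2(T,T)$ to give $\pi(T(u)\otimes T(v))-T\bigl(\rho(T(u)\otimes v)-((12)\otimes_H\mathrm{id})\,\rho(T(v)\otimes u)\bigr)$, and $\tfrac16 l_3(T,T,T)$ to give $-T\bigl(\theta(T(u)\otimes T(v))\bigr)$; summing them and moving the $T(\cdots)$ terms to the right-hand side reproduces exactly the defining identity of a $\mathcal{D}$-map of type~II, which gives the claimed equivalence. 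I expect the only genuine difficulty to be this last step: tracking all the transposition twists $((12)\otimes_H\mathrm{id})$, the Koszul and Nijenhuis--Richardson signs, and the shuffle coefficients carefully enough that the three contributions assemble into the stated identity with no leftover terms. That computation, however, is entirely parallel to the Type~I case (Theorem~\ref{thm:pseudo-mc-pseudo}) and to the twisting computation of Theorem~\ref{thm:4.10-pseudo}, whose sixth component $\xi^T$ is precisely this combination; so it is a matter of bookkeeping rather than of new ideas.
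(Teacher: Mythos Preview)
Your proposal is correct and follows essentially the same approach as the paper: apply Voronov's derived-bracket construction to the $V$-data $(\mathfrak{J},\mathfrak{K},P,\Delta)$, identify the surviving components of $\Delta$ at each stage by bidegree counting to obtain $l_1,l_2,l_3$ and $l_{k\geq 4}=0$, and then expand the Maurer--Cartan equation on $u,v\in\mathfrak{h}$ to recover the $\mathcal{D}$-map identity. Your write-up actually supplies slightly more justification for the $V$-data axioms (the bidegree $(-1)|l$ of $\mathfrak{K}$, the closure of $\ker P$, and $\Delta\in\ker P$) than the paper's proof, which largely defers these to Section~2.
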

\begin{proof}
By the general theory of $ V $-data (see \cite{Voronov}), a quadruple $ (\mathfrak{J}, \mathfrak{K}, P, \Delta) $ as above induces an $ \mathfrak{L}_\infty $-algebra structure on $ \mathfrak{K} $ via $$
l_k(f_1, \dots, f_k) = P([\cdots[[\Delta, f_1]_{\mathrm{NR}}, f_2]_{\mathrm{NR}}, \dots, f_k]_{\mathrm{NR}}).$$

Let's compute each operation explicitly. For $ f \in C^{k+1}(\mathfrak{h}, \mathfrak{g}) $, we get
$ l_1(f) = P([\pi + \rho + \mu + \eta + \theta, f]_{\mathrm{NR}}).
$ 
Since $ \mathfrak{K} $ consists of maps from $ \mathfrak{h}^{\otimes n} \to H^{\otimes(n-1)} \otimes_H \mathfrak{g} $, and $ P $ projects onto components landing in $ \mathfrak{g} $, only terms involving $ \mu $ and $ \eta $ contribute. Thus, $l_1(f) = [\mu + \eta, f]_{\mathrm{NR}}.$ Similarly, for $ f_1 \in C^{p+1}(\mathfrak{h}, \mathfrak{g}) $, $ f_2 \in C^{q+1}(\mathfrak{h}, \mathfrak{g}) $, we have 
$ 
l_2(f_1, f_2) = P([[\Delta, f_1]_{\mathrm{NR}}, f_2]_{\mathrm{NR}}).
$ 
The only contributions come from $ \pi $ and $ \rho $, that map $ \mathfrak{g} \to \mathfrak{h} $ or $ \mathfrak{h} \to \mathfrak{g} $ respectively. Thus, $ 
l_2(f_1, f_2) = [[\pi + \rho, f_1]_{\mathrm{NR}}, f_2]_{\mathrm{NR}}.
$ For
$l_3(f_1, f_2, f_3) = P([[[\Delta, f_1]_{\mathrm{NR}}, f_2]_{\mathrm{NR}}, f_3]_{\mathrm{NR}}),$
only $ \theta \in C^2(\mathfrak{g}, \mathfrak{h})$ survives after three brackets and projection to $ \mathfrak{K} $, providing
$ l_3(f_1, f_2, f_3) = [[[ \theta, f_1 ]_{\mathrm{NR}}, f_2 ]_{\mathrm{NR}}, f_3 ]_{\mathrm{NR}}.$ Since, $ \mathfrak{K} $ is abelian and the grading increases by one with each bracket, we find $ l_k = 0 $ for $ k \geq 4 $.

Now, let $ T \in C^1(\mathfrak{h}, \mathfrak{g}) $, i.e., $ T : \mathfrak{h} \to \mathfrak{g} $ is an $ H $-module homomorphism. Using Maurer-Cartan equation, we get
\begin{align*}
 &l_1(T) + \frac{1}{2} l_2(T, T) + \frac{1}{6} l_3(T, T, T) = 0.
\\
&[\mu + \eta, T]_{\mathrm{NR}} + \frac{1}{2} [[\pi + \rho, T]_{\mathrm{NR}}, T]_{\mathrm{NR}} + \frac{1}{6} [[[ \theta, T ]_{\mathrm{NR}}, T ]_{\mathrm{NR}}, T ]_{\mathrm{NR}} = 0.
\end{align*}
Expanding each component, we get
\begin{align*}
& -T(\mu(u \otimes v)) + \eta(T(u) \otimes v) - ((12) \otimes_H \mathrm{id})\,\eta(T(v) \otimes u) \\
& + \pi(T(u) \otimes T(v)) - T\big( \rho(T(u) \otimes v) - ((12) \otimes_H \mathrm{id})\,\rho(T(v) \otimes u) \big) \\
& + T(((12) \otimes_H \mathrm{id})\,\theta(T(v) \otimes T(u))) = 0,
\end{align*}
for all $ u, v \in \mathfrak{h} $. Precisely
\begin{align*}
 &\pi(T(u) \otimes T(v)) + \eta(T(u) \otimes v) - ((12) \otimes_H \mathrm{id})\,\eta(T(v) \otimes u) \\&= T\Big( \rho(T(u) \otimes v) - ((12) \otimes_H \mathrm{id})\,\rho(T(v) \otimes u) + \mu(u \otimes v) + \theta(T(u) \otimes T(v)) \Big).
\end{align*}
This is the condition for $ T $ to be a $\mathcal{D}$-map of type II. Therefore, $ T $ is a Maurer-Cartan element in the curved $ \mathfrak{L}_\infty $-pseudoalgebra $ (\mathfrak{K}, l_1, l_2, l_3) $ if and only if $ T $ is a $\mathcal{D}$-map. This completes the proof.
\end{proof} \noindent The general construction in Theorem~\ref{thm:4.11-pseudo} provides that, for any quasi-twilled Lie $ H $-pseudoalgebra $ (\mathfrak{G}, \mathfrak{g}, \mathfrak{h}) $, there is an associated curved $ \mathfrak{L}_\infty $-pseudoalgebra that governs the deformation theory of its $\mathfrak{D}$-maps of type II. This construction not only unifies but also generalizes the \emph{deformation theories} of key operator algebras through simple specialization.
The subsequent corollaries exemplify this unified construction. 
\begin{cor}\label{cor:4.12-pseudo}
Let $ (\mathfrak{g}, \mathfrak{h}; \rho) $ be a matched pair of pseudoalgebras over a cocommutative Hopf algebra $ H $, where $ \rho : \mathfrak{g} \otimes \mathfrak{h} \to H^{\otimes2} \otimes_H \mathfrak{h} $ is an action of $ \mathfrak{g} $ on $ \mathfrak{h} $. Consider the quasi-twilled pseudoalgebra $ (\mathfrak{G}, \mathfrak{g}, \mathfrak{h}) = (\mathfrak{g} \ltimes_\rho \mathfrak{h}, \mathfrak{g}, \mathfrak{h}) $ as defined in Example~\ref{actionLie}. Then
\begin{align*}
 \left( \bigoplus_{n=1}^\infty C^n(\mathfrak{h}, \mathfrak{g}),\ d,\ [[\cdot, \cdot]] \right)
\end{align*}
is a differential graded Lie algebra in the category $ \mathcal{M}^*(H) $, where the space of cochain $ C^n(\mathfrak{h}, \mathfrak{g}) = \mathrm{Hom}_{H^{\otimes n}}(\mathfrak{h}^{\otimes n}, H^{\otimes(n-1)} \otimes_H \mathfrak{g}) $ consists of $ H $-equivariant multilinear maps, and the differential $ d : C^p(\mathfrak{h}, \mathfrak{g}) \to C^{p+1}(\mathfrak{h}, \mathfrak{g}) $ is given by 
\begin{align*}
 d(f)(u_1 \otimes \cdots \otimes u_{p+1}) = \sum_{i<j} (-1)^{p+i+j-1}(\sigma_{i,j}\otimes_Hid) f([u_i * u_j]_\mathfrak{h} \otimes u_1 \otimes \cdots \widehat{u_i} \cdots \widehat{u_j} \cdots \otimes u_{p+1}),
\end{align*}
for all $ f \in C^p(\mathfrak{h}, \mathfrak{g}) $, $ u_1, \dots, u_{p+1} \in \mathfrak{h} $, and $ [\cdot * \cdot]_\mathfrak{h} $ denotes the pseudobracket on $ \mathfrak{h} $.
\noindent
The graded Lie bracket $ [[\cdot, \cdot]] : C^p(\mathfrak{h}, \mathfrak{g}) \otimes C^q(\mathfrak{h}, \mathfrak{g}) \to C^{p+q-1}(\mathfrak{h}, \mathfrak{g}) $ is defined by 
\begin{align}\label{eqcor4.12}
&[[f_1, f_2]](u_1 \otimes \cdots \otimes u_{p+q}) \\\nonumber
&= -\sum_{\sigma \in S(q,1,p-1)} \epsilon(\sigma)(\sigma\otimes_Hid)\, f_1\Big( \rho\big(f_2(u_{\sigma(1)} \otimes \cdots \otimes u_{\sigma(q)})\big) \otimes u_{\sigma(q+1)} \otimes \cdots \otimes u_{\sigma(p+q)} \Big) \\\nonumber
&\quad + \sum_{\sigma \in S(p,1,q-1)} (-1)^{pq} \epsilon(\sigma)(\sigma\otimes_Hid)\, f_2\Big( \rho\big(f_1(u_{\sigma(1)} \otimes \cdots \otimes u_{\sigma(p)})\big) \otimes u_{\sigma(p+1)} \otimes \cdots \otimes u_{\sigma(p+q)} \Big) \\\nonumber
&\quad - \sum_{\sigma \in S(p,q)} (-1)^{pq} \epsilon(\sigma)(\sigma\otimes_Hid)\, [f_1(u_{\sigma(1)} \otimes \cdots \otimes u_{\sigma(p)}) * f_2(u_{\sigma(p+1)} \otimes \cdots \otimes u_{\sigma(p+q)})]_\mathfrak{g},
\end{align}where $ S(p,q) $ is the set of $(p,q)$-unshuffles, $ S(q,1,p-1) $ and $ S(p,1,q-1) $ are sets of appropriate permutations, $ \epsilon(\sigma) $ is the Koszul sign associated with the permutation $ \sigma $, and $ \rho : \mathfrak{g} \otimes \mathfrak{h} \to H^{\otimes2} \otimes_H \mathfrak{h} $ is the action map. This differential graded Lie algebra governs the \emph{deformation theory of relative Rota-Baxter operators of weight $ p $} on $ \mathfrak{g} $ with respect to the representation $ \mathfrak{h} $. It is therefore the controlling algebra for such operators.
\end{cor}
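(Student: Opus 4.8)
\subsection*{Proof proposal for Corollary~\ref{cor:4.12-pseudo}}

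The plan is to obtain the statement by specializing Theorem~\ref{thm:4.11-pseudo} to the quasi-twilled Lie pseudoalgebra $(\mathfrak{g} \ltimes_\rho \mathfrak{h}, \mathfrak{g}, \mathfrak{h})$ of Example~\ref{actionLie} and then expanding the abstract $\mathfrak{L}_\infty$-operations into the component formulas asserted. For this structure the decomposition $\Omega = \pi + \rho + \mu + \eta + \theta$ has $\pi = [\cdot * \cdot]_{\mathfrak{g}}$, $\mu = p\,[\cdot * \cdot]_{\mathfrak{h}}$, $\rho$ the given action, and $\eta = \theta = 0$. Feeding this into Theorem~\ref{thm:4.11-pseudo}, the curvature $l_0 = P(\Delta) = P(\pi + \rho + \mu)$ vanishes, since none of $\pi,\rho,\mu$ has a component landing in $\mathrm{Hom}_{H^{\otimes n}}(\mathfrak{h}^{\otimes n}, H^{\otimes(n-1)}\otimes_H\mathfrak{g})$, and $l_3(f,g,h) = [[[\theta, f]_{\mathrm{NR}}, g]_{\mathrm{NR}}, h]_{\mathrm{NR}} = 0$ because $\theta = 0$ (and $l_{k\geq4}=0$ already). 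Hence the curved $\mathfrak{L}_\infty$-pseudoalgebra on $\mathfrak{K} = \bigoplus_{n\geq1}C^n(\mathfrak{h},\mathfrak{g})$ collapses to the data $(\mathfrak{K}, l_1, l_2)$, whose surviving higher pseudo-Jacobi identities from Definition~\ref{def:curved-Linf-conf} are exactly $l_1^2 = 0$, the Leibniz rule of $l_1$ over $l_2$, and the graded Jacobi identity for $l_2$; so $(\mathfrak{K}, d, [[\cdot,\cdot]])$, with $d$ and $[[\cdot,\cdot]]$ the suitably re-signed versions of $l_1$ and $l_2$, is a differential graded Lie algebra.

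Next I would expand the two operations via the circle product of Eq.~\eqref{circleproduct}. For $l_1(f) = [\mu + \eta, f]_{\mathrm{NR}} = [\mu, f]_{\mathrm{NR}}$ with $f \in C^p(\mathfrak{h},\mathfrak{g})$, the summand $\mu \circledcirc f$ vanishes, because it would feed the $\mathfrak{g}$-valued output of $f$ into a slot of $\mu$ that only accepts arguments from $\mathfrak{h}$; what remains is $-(-1)^{p-1} f \circledcirc \mu$, a sum over $(2,p-1)$-shuffles inserting $\mu(u_i \otimes u_j) = p\,[u_i * u_j]_{\mathfrak{h}}$ into a slot of $f$. De-shuffling and matching $(-1)^\sigma$ and the equivariance factor against the convention for $\sigma_{i,j}$ reproduces the displayed $d$ up to overall sign and the weight $p$. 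For $l_2(f,g) = [[\pi + \rho, f]_{\mathrm{NR}}, g]_{\mathrm{NR}}$ the essential observation is that the intermediate cochain $[\pi + \rho, f]_{\mathrm{NR}}$ is supported on \emph{mixed} arguments: $\pi \circledcirc f$ is nonzero only when the single argument not consumed by $f$ lies in $\mathfrak{g}$, and $f \circledcirc \rho$ only when exactly one argument lies in $\mathfrak{g}$; so bracketing with $g$ and applying $P$ plugs the $\mathfrak{g}$-valued output of $g$ precisely into that $\mathfrak{g}$-slot. Tracking the three ways this occurs yields the three families of terms in Eq.~\eqref{eqcor4.12}: the $\pi$-contribution produces $[f_1(\cdots) * f_2(\cdots)]_{\mathfrak{g}}$ over $S(p,q)$, while the two $\rho$-contributions, coming from $g \circledcirc (\rho \circledcirc f)$ and from $(f \circledcirc \rho)\circledcirc g$, produce the terms indexed by $S(p,1,q-1)$ and $S(q,1,p-1)$ in which one $f_i$'s output is acted on by $\rho$ and fed into the other; the contribution of $\mu$ vanishes for the same type reason as in $l_1$.

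Finally, by Theorem~\ref{thm:4.11-pseudo} an $H$-module homomorphism $T : \mathfrak{h} \to \mathfrak{g}$ is a Maurer-Cartan element of $(\mathfrak{K}, l_1, l_2)$, i.e.\ $d(T) + \frac{1}{2}[[T,T]] = 0$ (the $l_3$-term having dropped out), if and only if it is a $\mathcal{D}$-map of type II of $(\mathfrak{g}\ltimes_\rho\mathfrak{h}, \mathfrak{g},\mathfrak{h})$, which by Example~\ref{ex:rel-RB-op-pseudo} is exactly a relative Rota-Baxter operator of weight $p$ on $\mathfrak{g}$ with respect to $(\mathfrak{h},\rho)$; twisting this DGLA by a fixed such $T$ via Definition~\ref{def:MC-twist} then produces the DGLA governing the infinitesimal deformations of $T$, so the complex in the statement is its controlling algebra. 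I expect the main obstacle to be the bookkeeping in the second step: one must check that it is only these mixed-support components of $[\Delta, f]$ that survive the second bracket and the projection $P$, and that the resulting shuffle sums assemble — with the exact signs $(-1)^{p+i+j-1}$, the permutations $\sigma_{i,j}\otimes_H\mathrm{id}$, and the Koszul signs $\epsilon(\sigma)$ — into the displayed formulas for $d$ and $[[\cdot,\cdot]]$ rather than sign-twisted variants of them.
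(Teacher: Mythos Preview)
Your proposal is correct and follows exactly the approach the paper intends: the corollary is stated without a separate proof and is obtained by specializing Theorem~\ref{thm:4.11-pseudo} to the quasi-twilled pseudoalgebra of Example~\ref{actionLie}, where $\eta=\theta=0$ forces $l_3=0$ and leaves only $l_1=[\mu,\cdot]_{\mathrm{NR}}$ and $l_2=[[\pi+\rho,\cdot]_{\mathrm{NR}},\cdot]_{\mathrm{NR}}$. Your expansion of these two operations into the explicit shuffle-sum formulas for $d$ and $[[\cdot,\cdot]]$, together with the Maurer--Cartan identification via Example~\ref{ex:rel-RB-op-pseudo}, supplies precisely the computational detail that the paper leaves implicit.
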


\begin{cor}\label{cor:4.13-pseudo}
Let $ (\mathfrak{g}, M; \rho) $ be a matched pair consisting of a Lie pseudoalgebra $ \mathfrak{g} $ and an $ \mathfrak{g} $-module $ M $ over a cocommutative Hopf algebra $ H $, where $ \rho: \mathfrak{g} \otimes M \to H^{\otimes2} \otimes_H M $ is an action in the pseudotensor category $ \mathcal{M}^*(H) $. Consider the quasi-twilled pseudoalgebra $ (\mathfrak{G}, \mathfrak{g}, M) = (\mathfrak{g} \ltimes_\rho M, \mathfrak{g}, M) $ defined in Example~\ref{SEMDIRPRODLIEALG}. Then
$$
\left( \bigoplus_{n=1}^\infty C^n(M, \mathfrak{g}):= \bigoplus_{n=1}^\infty \mathrm{Hom}_{H^{\otimes n}}(M^{\otimes n}, H^{\otimes(n-1)} \otimes_H \mathfrak{g}) ,\ [[\cdot, \cdot]] \right)
$$
is a graded Lie algebra in $ \mathcal{M}^*(H) $, where the graded Lie bracket 
$[[\cdot, \cdot]] : C^p(M, \mathfrak{g}) \times C^q(M, \mathfrak{g}) \to C^{p+q-1}(M, \mathfrak{g})$ is given by Eq. \ref{eqcor4.12}. This graded Lie algebra governs the deformation theory of $ \mathcal{O} $-operators (i.e., relative Rota-Baxter operators of weight zero) on $ \mathfrak{g} $ with respect to the representation $ (M; \rho) $. It is therefore the controlling algebra for $ \mathcal{O} $-operators. The detailed study about $\mathcal{O}$- operators on Lie H-pseudoalgebra with respect to pseudo-representation is being defined in our next paper.
\end{cor}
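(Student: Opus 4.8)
The plan is to obtain Corollary~\ref{cor:4.13-pseudo} as the degenerate case of Theorem~\ref{thm:4.11-pseudo} in which three of the five structure maps of the quasi-twilled Lie pseudoalgebra vanish. First I would record these structure maps for the semidirect product $(\mathfrak{g}\ltimes_\rho M,\mathfrak{g},M)$ of Example~\ref{SEMDIRPRODLIEALG}: comparing its pseudobracket $[(x,u)*(y,v)]_\rho = \big([x*y]_\mathfrak{g},\ \rho(x)*v-((12)\otimes_H\mathrm{id})\rho(y)*u\big)$ with the general decomposition $\Omega=\pi+\rho+\mu+\eta+\theta$ of Theorem~\ref{QTPC}, one reads off $\pi=[\cdot*\cdot]_\mathfrak{g}$, the map $\rho$ equal to the given action of $\mathfrak{g}$ on $M$, and $\mu=\eta=\theta=0$, since $M$ is an abelian $\mathfrak{g}$-module, carries no action back on $\mathfrak{g}$, and no twisting $2$-cocycle is present.

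Substituting these vanishings into the operations of Theorem~\ref{thm:4.11-pseudo} collapses the controlling $\mathfrak{L}_\infty$-pseudoalgebra on $\mathfrak{K}=\bigoplus_{n\geq0}C^{n+1}(M,\mathfrak{g})$ to a single bracket: the curvature is $l_0=P(\Delta)=P(\theta)=0$; the unary operation is $l_1(f)=[\mu+\eta,f]_{\mathrm{NR}}=0$; the ternary operation is $l_3(f,g,h)=[[[\theta,f]_{\mathrm{NR}},g]_{\mathrm{NR}},h]_{\mathrm{NR}}=0$; and $l_{k\geq4}=0$ as in the proof of Theorem~\ref{thm:4.11-pseudo}. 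Only $l_2(f,g)=[[\pi+\rho,f]_{\mathrm{NR}},g]_{\mathrm{NR}}$ survives. A curved $\mathfrak{L}_\infty$-pseudoalgebra with $l_0=l_1=0$ and $l_{k\geq3}=0$ is exactly a graded Lie algebra in $\mathcal{M}^*(H)$: in Definition~\ref{def:curved-Linf-conf}, the pseudo-skew-symmetry axiom reduces to the graded antisymmetry of $l_2$, the higher pseudo-Jacobi identity for $n=3$ is precisely the graded Jacobi identity for $l_2$, and every remaining instance ($n=1,2$ and $n\geq4$) holds vacuously. After the degree shift identifying $\mathfrak{K}^{n}$ with $C^{n+1}(M,\mathfrak{g})$, this is the graded Lie algebra asserted in the statement.

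It then remains to identify the abstract derived bracket $l_2$ with the explicit shuffle formula~\eqref{eqcor4.12}. I would unwind the two nested Nijenhuis--Richardson brackets in $[[\pi+\rho,f_1]_{\mathrm{NR}},f_2]_{\mathrm{NR}}$ via the circle product~\eqref{circleproduct}, using that $\pi$ and $\rho$ both have arity $2$. The three summands of~\eqref{eqcor4.12} arise from the three inequivalent ways the insertions can be distributed: $\rho$ composed with the output of $f_2$ and then plugged into $f_1$ (first term, unshuffles in $S(q,1,p-1)$), the mirror configuration with $f_1$ and $f_2$ interchanged (second term, with the extra sign $(-1)^{pq}$ coming from the graded commutator and unshuffles in $S(p,1,q-1)$), and $\pi=[\cdot*\cdot]_\mathfrak{g}$ applied to the two outputs $f_1(\cdots)$ and $f_2(\cdots)$ (third term, unshuffles in $S(p,q)$); the Koszul signs $\epsilon(\sigma)$ record the rearrangement of the inputs. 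This is the same computation as in the proof of Corollary~\ref{cor:4.12-pseudo}, the only simplification here being the absence of the differential term, since $\mu=0$.

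Finally, since $l_1=0$ and $l_{k\geq3}=0$, the Maurer--Cartan equation for $T\in C^1(M,\mathfrak{g})$ reduces to $\frac{1}{2}\,l_2(T,T)=0$; evaluating the component formula just obtained at $p=q=1$ and discarding the vanishing $\eta$-terms yields, for all $u,v\in M$,
\begin{align*}
[T(u)*T(v)]_\mathfrak{g} = T\big(\rho(T(u)\otimes v)-((12)\otimes_H\mathrm{id})\,\rho(T(v)\otimes u)\big),
\end{align*}
which is exactly the defining identity of an $\mathcal{O}$-operator (relative Rota--Baxter operator of weight zero) from Example~\ref{ex:O-operator-pseudo}. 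Hence the graded Lie algebra is the controlling algebra of $\mathcal{O}$-operators. The hard part is the penultimate step --- matching the abstract Voronov bracket with the combinatorial formula~\eqref{eqcor4.12}, that is, tracking the Koszul signs and the precise unshuffle index sets through the two nested NR-brackets; this is routine but lengthy, and is essentially already carried out in the proof of Corollary~\ref{cor:4.12-pseudo}.
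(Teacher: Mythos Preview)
Your proposal is correct and follows exactly the approach the paper intends: the corollary is stated without proof, as a direct specialization of Theorem~\ref{thm:4.11-pseudo} to the semidirect product of Example~\ref{SEMDIRPRODLIEALG}, where $\mu=\eta=\theta=0$ forces $l_1=l_3=0$ and leaves only the graded Lie bracket $l_2=[[\pi+\rho,\cdot]_{\mathrm{NR}},\cdot]_{\mathrm{NR}}$. Your elaboration of the sign and unshuffle bookkeeping for identifying $l_2$ with formula~\eqref{eqcor4.12}, and the verification that the Maurer--Cartan equation reduces to the $\mathcal{O}$-operator identity, simply makes explicit what the paper leaves to the reader.
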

 \begin{cor}\label{cor:4.14-pseudo}
Let $ (\mathfrak{g}, M; \rho, \Phi) $ be a matched pair consisting of a Lie pseudoalgebra $ \mathfrak{g} $, an $ \mathfrak{g} $-module $ M $ over a cocommutative Hopf algebra $ H $, an action $ \rho : \mathfrak{g} \otimes M \to H \otimes_H M $, and a $2$-cocycle $ \Phi \in C^2(\mathfrak{g}, M) $. Consider the quasi-twilled pseudoalgebra $ (\mathfrak{G}, \mathfrak{g}, M) = (\mathfrak{g} \ltimes_{\rho,\Phi} M, \mathfrak{g}, M) $ as defined in Example~\ref{excocycle}. Then
$$
\left( \bigoplus_{n=0}^\infty C^{n+1}(M, \mathfrak{g}),\ l_2,\ l_3 \right)
$$
is a curved $ \mathfrak{L}_\infty $-pseudoalgebra in the pseudotensor category $ \mathcal{M}^*(H) $. 
The binary operation $ l_2 : C^p(M, \mathfrak{g}) \times C^q(M, \mathfrak{g}) \to C^{p+q-1}(M, \mathfrak{g}) $ is given by 
\begin{align*}
&l_2(f_1, f_2)(u_1 \otimes \cdots \otimes u_{p+q}) \\
&= \sum_{\sigma \in S(q,1,p-1)} (-1)^{p + |\sigma|} (\sigma\otimes_Hid)f_1\Big( \rho\big(f_2(u_{\sigma(1)} \otimes \cdots \otimes u_{\sigma(q)})\big) \otimes u_{\sigma(q+1)} \otimes \cdots \otimes u_{\sigma(p+q)} \Big) \\
&\quad - (-1)^{p(q+1)} \sum_{\sigma \in S(p,1,q-1)} (-1)^{|\sigma|}(\sigma\otimes_Hid) f_2\Big( \rho\big(f_1(u_{\sigma(1)} \otimes \cdots \otimes u_{\sigma(p)})\big) \otimes u_{\sigma(p+1)} \otimes \cdots \otimes u_{\sigma(p+q)} \Big) \\
&\quad + (-1)^{p(q+1)} \sum_{\sigma \in S(p,q)} (-1)^{|\sigma|} (\sigma\otimes_Hid)[f_1(u_{\sigma(1)} \otimes \cdots \otimes u_{\sigma(p)}) * f_2(u_{\sigma(p+1)} \otimes \cdots \otimes u_{\sigma(p+q)})]_\mathfrak{g},
\end{align*}
and the ternary operation $ l_3 : C^p(M, \mathfrak{g}) \times C^q(M, \mathfrak{g}) \times C^r(M, \mathfrak{g}) \to C^{p+q+r-2}(M, \mathfrak{g}) $ is given by:
\begin{align*}
&l_3(f_1, f_2, f_3)(u_1 \otimes \cdots \otimes u_{p+q+r-1}) \\
&= \sum_{\sigma \in S(q,r,p-1)} (-1)^{p+q + qr + |\sigma|} (\sigma\otimes_Hid)f_1\Big( \Phi\big(f_2(u_{\sigma(1)} \otimes \cdots \otimes u_{\sigma(q)}),\ 
\\& f_3(u_{\sigma(q+1)} \otimes \cdots \otimes u_{\sigma(q+r)})\big) \otimes u_{\sigma(q+r+1)} \otimes \cdots \otimes u_{\sigma(p+q+r-1)} \Big) \\
&\quad - (-1)^{p(q+r)} \sum_{\sigma \in S(p,r,q-1)} (-1)^{|\sigma|}(\sigma\otimes_Hid) f_2\Big( \Phi\big(f_1(u_{\sigma(1)} \otimes \cdots \otimes u_{\sigma(p)}),\ \\
& f_3(u_{\sigma(p+1)} \otimes \cdots \otimes u_{\sigma(p+r)})\big) \otimes u_{\sigma(p+r+1)} \otimes \cdots \otimes u_{\sigma(p+q+r-1)} \Big) \\
&\quad + (-1)^{pq + pr + rq + q + r + |\sigma|} \sum_{\sigma \in S(p,q,r-1)} (-1)^{|\sigma|} (\sigma\otimes_Hid)\Big( \Phi\big(f_1(u_{\sigma(1)} \otimes \cdots \otimes u_{\sigma(p)}),\ 
\\ 
&f_2(u_{\sigma(p+1)} \otimes \cdots \otimes u_{\sigma(p+q)})\big) \otimes u_{\sigma(p+q+1)} \otimes \cdots \otimes u_{\sigma(p+q+r-1)} \Big),
\end{align*}\noindent This $ \mathfrak{L}_\infty $-pseudoalgebra governs the deformation theory of \emph{twisted Rota-Baxter operators} on $ \mathfrak{g} $ with respect to the representation $ (M; \rho) $ and the cocycle $ \Phi $. It is therefore the controlling algebra for twisted Rota-Baxter operators on the pseudoalgebras. The relevant research can be seen in \cite[Theorem 3.2]{Das}).
\end{cor}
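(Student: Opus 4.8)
The plan is to obtain Corollary~\ref{cor:4.14-pseudo} as a specialization of Theorem~\ref{thm:4.11-pseudo} to the quasi-twilled pseudoalgebra $(\mathfrak{g}\ltimes_{\rho,\Phi}M,\mathfrak{g},M)$ of Example~\ref{excocycle}, followed by an explicit unwinding of the Nijenhuis--Richardson brackets. First I would read off the five structure maps of $\Omega$ for this example from the pseudobracket $[\cdot*\cdot]_\rho^\omega$: the $\mathfrak{g}\otimes\mathfrak{g}\to\mathfrak{g}$ component is $\pi=[\cdot*\cdot]_\mathfrak{g}$, the $\mathfrak{g}\otimes M\to M$ component is the action $\rho$, the $\mathfrak{g}\otimes\mathfrak{g}\to M$ component is $\theta=\Phi$, while the bracket $\mu$ on $M$ and the would-be action $\eta$ of $M$ on $\mathfrak{g}$ both vanish since $M$ is only a representation. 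Substituting $\mu=\eta=0$ into Theorem~\ref{thm:4.11-pseudo} gives at once $l_1=[\mu+\eta,-]_{\mathrm{NR}}=0$ (which is why the corollary lists only $l_2$ and $l_3$) and also $l_0=P(\Delta)=0$, since none of $\pi,\rho,\Phi$ is an $\mathfrak{h}$-to-$\mathfrak{g}$ map; what remains are $l_2(f,g)=[[\pi+\rho,f]_{\mathrm{NR}},g]_{\mathrm{NR}}$, $l_3(f,g,h)=[[[\Phi,f]_{\mathrm{NR}},g]_{\mathrm{NR}},h]_{\mathrm{NR}}$, and $l_{k\geq4}=0$. The $V$-data formalism of Theorem~\ref{thmvor} already certifies that these operations make $\bigl(\bigoplus_{n\geq0}C^{n+1}(M,\mathfrak{g}),l_2,l_3\bigr)$ a curved $\mathfrak{L}_\infty$-pseudoalgebra, so it only remains to exhibit the stated closed formulas and the Maurer--Cartan characterization.

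For the formulas I would expand the iterated brackets using the circle product~\eqref{circleproduct}, the lift maps, and the bidegree bookkeeping of Definition~\ref{def:bidegree}. Computing $[\pi+\rho,f]_{\mathrm{NR}}$ for $f\in C^{p}(M,\mathfrak{g})$ produces one family of terms in which the output of $f$ is inserted into $[\cdot*\cdot]_\mathfrak{g}$ and another in which $\rho$ acts on that output; bracketing once more with $g\in C^{q}(M,\mathfrak{g})$ and keeping only the component in $\mathfrak{K}=\bigoplus_n C^{n+1}(M,\mathfrak{g})$ that survives $P$ yields exactly the three shuffle sums over $S(q,1,p-1)$, $S(p,1,q-1)$ and $S(p,q)$ with the displayed Koszul signs, all remaining terms being killed by the projection. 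The same computation for the triple bracket $[[[\Phi,f]_{\mathrm{NR}},g]_{\mathrm{NR}},h]_{\mathrm{NR}}$ produces the three sums over $S(q,r,p-1)$, $S(p,r,q-1)$ and $S(p,q,r-1)$ defining $l_3$; because $\Phi$ has only two $\mathfrak{g}$-type slots and an $\mathfrak{h}$-type value, after three successive brackets the result leaves $\mathfrak{K}$, which is the structural reason $l_3$ is the top nonzero operation and $l_{k\geq4}=0$.

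Finally I would verify the Maurer--Cartan interpretation. Since $l_1=0$, an $H$-module map $T:M\to\mathfrak{g}$ is a Maurer--Cartan element precisely when $\frac{1}{2}l_2(T,T)+\frac{1}{6}l_3(T,T,T)=0$; evaluating on $u\otimes v$ and using $\eta=0$, the $l_2$-contribution collapses to $[T(u)*T(v)]_\mathfrak{g}-T\bigl(\rho(T(u)\otimes v)-((12)\otimes_H\mathrm{id})\rho(T(v)\otimes u)\bigr)$ and the $l_3$-contribution to $-T\bigl(\Phi(T(u)\otimes T(v))\bigr)$, whose vanishing is exactly the twisted Rota--Baxter identity of Example~\ref{exTRB}. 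Together with the structure statement this identifies the displayed $\mathfrak{L}_\infty$-pseudoalgebra as the controlling algebra, recovering \cite[Theorem 3.2]{Das} in the pseudoalgebraic setting. The main obstacle will be the sign- and shuffle-accounting in the middle step: faithfully translating the nested $[\cdot,\cdot]_{\mathrm{NR}}$ on lifted maps into the precise unshuffle sums with the stated Koszul signs, especially for $l_3$, where each of the three successive brackets contributes a new shuffle and an extra sign, and where one must confirm that every term not built from $\Phi$ (respectively not of the stated type for $l_2$) is annihilated by $P$ on bidegree grounds.
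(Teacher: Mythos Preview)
Your proposal is correct and follows exactly the approach implicit in the paper: the corollary is stated without proof as a direct specialization of Theorem~\ref{thm:4.11-pseudo} to Example~\ref{excocycle}, with $\mu=\eta=0$ and $\theta=\Phi$, which immediately yields $l_1=0$ and leaves only $l_2,l_3$ as nested NR-brackets to be unwound into the displayed shuffle sums. The Maurer--Cartan interpretation you give, reducing to the twisted Rota--Baxter identity of Example~\ref{exTRB}, is precisely what the paper intends by ``governs the deformation theory of twisted Rota--Baxter operators.''
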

\begin{cor}\label{cor:4.15}
Consider the quasi-twilled Lie pseudoalgebra $(\mathfrak{g} \ltimes_{{ad}, \Phi} \mathfrak{g}, \mathfrak{g}, \mathfrak{g})$ given in Example \ref{exex}. Parallel to Corollary \ref{cor:4.14-pseudo}, 
$\left(\bigoplus_{n=0}^{+\infty}C^{n+1}( \mathfrak{g}, \mathfrak{g}), l_2, l_3\right)$
is an $\mathfrak{L}_\infty$-pseudoalgebra. This $\mathfrak{L}_\infty$-pseudoalgebra can be assumed as the controlling algebra for \emph{Reynolds operators}. See \cite{Das} for more details. 
\end{cor}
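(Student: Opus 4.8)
The plan is to derive this statement as a specialization of Corollary~\ref{cor:4.14-pseudo} (equivalently, of Theorem~\ref{thm:4.11-pseudo}) to the quasi-twilled Lie pseudoalgebra of Example~\ref{exex}. The first step is to observe that $(\mathfrak{g} \ltimes_{\mathrm{ad},\Phi} \mathfrak{g}, \mathfrak{g}, \mathfrak{g})$ is precisely the case $M = \mathfrak{g}$, $\rho = \mathrm{ad}$, $\omega = \Phi = [\cdot * \cdot]_{\mathfrak{g}}$ of the quasi-twilled pseudoalgebra $(\mathfrak{g} \ltimes_\rho^\omega M, \mathfrak{g}, M)$ of Example~\ref{excocycle}, so that in the notation of Theorem~\ref{thm:4.11-pseudo} its structure maps are $\pi = [\cdot * \cdot]_{\mathfrak{g}}$, $\rho = \mathrm{ad}$, $\mu = 0$, $\eta = 0$, and $\theta = [\cdot * \cdot]_{\mathfrak{g}}$. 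Then I would check that $\Delta = \pi + \rho + \mu + \eta + \theta$ satisfies $[\Delta, \Delta]_{\mathrm{NR}} = 0$; this is equivalent to $\Omega$ being a Lie pseudoalgebra bracket on $\mathfrak{g} \boxplus \mathfrak{g}$, which unwinds to the skew-symmetry and Jacobi identity of $(\mathfrak{g}, [\cdot * \cdot]_{\mathfrak{g}})$ together with the $2$-cocycle condition for $\Phi = [\cdot * \cdot]_{\mathfrak{g}}$ with respect to the adjoint representation (itself a reformulation of the Jacobi identity). Consequently $(\mathfrak{J}, \mathfrak{K}, P, \Delta)$, with $\mathfrak{J} = \bigoplus_{n \geq 0} C^{n+1}(\mathfrak{g} \boxplus \mathfrak{g}, \mathfrak{g} \boxplus \mathfrak{g})$, $\mathfrak{K} = \bigoplus_{n \geq 0} C^{n+1}(\mathfrak{g}, \mathfrak{g})$ and $P$ the projection onto $\mathfrak{K}$, is a curved $V$-data, and Theorem~\ref{thmvor} applies.

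The second step is to compute the induced operations. Since $\mu = \eta = 0$, the unary operation $l_1 = P([\mu + \eta, -]_{\mathrm{NR}})$ vanishes, and the curvature $l_0 = P(\Delta)$ also vanishes because none of $\pi, \rho, \mu, \eta, \theta$ has the bidegree of an element of $\mathfrak{K}$; by Theorem~\ref{thm:4.11-pseudo} the only surviving operations are $l_2(f, g) = [[\pi + \rho, f]_{\mathrm{NR}}, g]_{\mathrm{NR}}$ and $l_3(f, g, h) = [[[\theta, f]_{\mathrm{NR}}, g]_{\mathrm{NR}}, h]_{\mathrm{NR}}$, with $l_{k \geq 4} = 0$, so $\big(\bigoplus_{n=0}^{\infty} C^{n+1}(\mathfrak{g}, \mathfrak{g}), l_2, l_3\big)$ is an $\mathfrak{L}_\infty$-pseudoalgebra. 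The explicit formulas for $l_2$ and $l_3$ are obtained from those in Corollary~\ref{cor:4.14-pseudo} by substituting $\rho = \mathrm{ad}$, so that $\rho(x \otimes v) = [x * v]_{\mathfrak{g}}$, and $\Phi = [\cdot * \cdot]_{\mathfrak{g}}$; the only care needed is to track the Koszul signs and the shuffle sets $S(q,1,p-1)$, $S(p,1,q-1)$, $S(p,q)$ for $l_2$ and $S(q,r,p-1)$, $S(p,r,q-1)$, $S(p,q,r-1)$ for $l_3$ exactly as in Corollary~\ref{cor:4.14-pseudo}.

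The third step identifies the Maurer-Cartan elements. For $T \in C^1(\mathfrak{g}, \mathfrak{g})$ the Maurer-Cartan equation is $\tfrac{1}{2} l_2(T, T) + \tfrac{1}{6} l_3(T, T, T) = 0$, there being no $l_1$ term. Expanding it as in the proof of Theorem~\ref{thm:4.11-pseudo}, now with $\mu = \eta = 0$, $\pi = \theta = [\cdot * \cdot]_{\mathfrak{g}}$ and $\rho = \mathrm{ad}$, gives
\begin{align*}
[T(u) * T(v)]_{\mathfrak{g}} = T\big( [T(u) * v]_{\mathfrak{g}} - ((12) \otimes_H \mathrm{id})\,[T(v) * u]_{\mathfrak{g}} + [T(u) * T(v)]_{\mathfrak{g}} \big),
\end{align*}
and rewriting $-((12) \otimes_H \mathrm{id})\,[T(v) * u]_{\mathfrak{g}} = [u * T(v)]_{\mathfrak{g}}$ by skew-symmetry yields exactly the Reynolds operator identity of Example~\ref{exRO}. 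Hence the Maurer-Cartan elements of $\big(\bigoplus_{n=0}^{\infty} C^{n+1}(\mathfrak{g}, \mathfrak{g}), l_2, l_3\big)$ are precisely the Reynolds operators on $\mathfrak{g}$, so this $\mathfrak{L}_\infty$-pseudoalgebra is their controlling algebra, and twisting it by a fixed Reynolds operator governs the deformation theory of such operators. The main obstacle is entirely computational: verifying $[\Delta, \Delta]_{\mathrm{NR}} = 0$ in this special case and checking that the specialized formulas for $l_2$ and $l_3$ carry the correct Koszul signs; no new conceptual input beyond Theorem~\ref{thm:4.11-pseudo} and Corollary~\ref{cor:4.14-pseudo} is required.
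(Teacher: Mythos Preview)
Your proposal is correct and follows exactly the approach the paper intends: the corollary is stated without proof, merely as the specialization of Corollary~\ref{cor:4.14-pseudo} (and hence of Theorem~\ref{thm:4.11-pseudo}) to the case $M=\mathfrak{g}$, $\rho=\mathrm{ad}$, $\Phi=[\cdot*\cdot]_{\mathfrak{g}}$, with a reference to \cite{Das}. Your identification of the structure maps, the vanishing of $l_0$ and $l_1$, and the verification that the Maurer--Cartan equation reduces to the Reynolds operator identity are precisely the computations implicit in the paper's phrase ``Parallel to Corollary~\ref{cor:4.14-pseudo}.''
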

\noindent Theorem~\ref{thm:4.11-pseudo} not only recovers some known results, but also gives rise to new ones, e.g., \emph{it gives rise to the controlling algebra for deformation maps of a matched pair of Lie pseudoalgebras.}

\begin{cor}\label{cor:4.16-pseudo}
Let $ (\mathfrak{g}, \mathfrak{h}; \rho, \eta) $ be a matched pair of Lie pseudoalgebras over a cocommutative Hopf algebra $ H $. Consider the quasi-twilled pseudoalgebra $ (\mathfrak{G}, \mathfrak{g}, \mathfrak{h}) = (\mathfrak{g} \bowtie \mathfrak{h}, \mathfrak{g}, \mathfrak{h}) $ defined in Example~\ref{exmax}. Then
$$
\left( \bigoplus_{n=1}^\infty C^n(\mathfrak{h}, \mathfrak{g}),\ d,\ [[\cdot, \cdot]] \right)
$$
is a differential graded Lie algebra in the pseudotensor category $ \mathcal{M}^*(H) $, where the differential map $ d : C^p(\mathfrak{h}, \mathfrak{g}) \to C^{p+1}(\mathfrak{h}, \mathfrak{g}) $ is given by 
\begin{align*}
d(f)(u_1 \otimes \cdots \otimes u_{p+1}) &= \sum_{i=1}^{p+1} (-1)^{p+i} (\sigma_{1,i} \otimes_H \mathrm{id})\,\eta(u_i \otimes f(u_1 \otimes \cdots \widehat{u_i} \cdots \otimes u_{p+1})) \\
&\quad + \sum_{i<j} (-1)^{p+i+j-1} (\sigma_{i,j} \otimes_H \mathrm{id})\,f([u_i * u_j]_\mathfrak{h} \otimes u_1 \otimes \cdots \widehat{u_i} \cdots \widehat{u_j} \cdots \otimes u_{p+1}),
\end{align*}
for all $ f \in C^p(\mathfrak{h}, \mathfrak{g}) $, $ u_1, \dots, u_{p+1} \in \mathfrak{h} $. The graded Lie bracket $ [[\cdot, \cdot]] : C^p(\mathfrak{h}, \mathfrak{g}) \times C^q(\mathfrak{h}, \mathfrak{g}) \to C^{p+q-1}(\mathfrak{h}, \mathfrak{g})
$ is defined by the same formula as in Corollary~\ref{cor:4.12-pseudo}, which arises from the Nijenhuis-Richardson bracket in the pseudotensor category $ \mathcal{M}^*(H) $. The Maurer-Cartan elements of this differential graded Lie algebra are exactly the deformation maps of type II of the matched pair $ (\mathfrak{g}, \mathfrak{h}; \rho, \eta) $.
\end{cor}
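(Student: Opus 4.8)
\emph{Proof proposal.} The plan is to obtain this corollary as the $\theta = 0$ specialization of Theorem~\ref{thm:4.11-pseudo}. First I would record that for the quasi-twilled Lie pseudoalgebra $(\mathfrak{g} \bowtie \mathfrak{h}, \mathfrak{g}, \mathfrak{h})$ attached to a matched pair $(\mathfrak{g}, \mathfrak{h}; \rho, \eta)$ as in Example~\ref{exmax}, the decomposition $\Omega = \pi + \rho + \mu + \eta + \theta$ of the pseudobracket $[\cdot*\cdot]_{\bowtie}$ has $\pi = [\cdot*\cdot]_{\mathfrak{g}}$, $\mu = [\cdot*\cdot]_{\mathfrak{h}}$, the prescribed maps $\rho$ and $\eta$, and, crucially, $\theta = 0$. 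Feeding this into Theorem~\ref{thm:4.11-pseudo}, the curved $\mathfrak{L}_\infty$-pseudoalgebra on $\mathfrak{K} = \bigoplus_{n \geq 0} C^{n+1}(\mathfrak{h}, \mathfrak{g})$ has $l_0 = P(\theta) = 0$, $l_3 = [[[\theta, -]_{\mathrm{NR}}, -]_{\mathrm{NR}}, -]_{\mathrm{NR}} = 0$, and $l_{k \geq 4} = 0$, leaving only $l_1(f) = [\mu + \eta, f]_{\mathrm{NR}}$ and $l_2(f, g) = [[\pi + \rho, f]_{\mathrm{NR}}, g]_{\mathrm{NR}}$. An $\mathfrak{L}_\infty$-pseudoalgebra whose only nonvanishing operations are $l_1$ and $l_2$ is exactly a differential graded Lie algebra: the higher pseudo-Jacobi identities of Definition~\ref{def:curved-Linf-conf} collapse to $l_1 \circ l_1 = 0$, to $l_1$ being a graded derivation of $l_2$, and to the graded Jacobi identity for $l_2$. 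After the degree shift $C^n(\mathfrak{h}, \mathfrak{g}) = \mathfrak{K}^{n-1}$ this already furnishes the dgLa $\left(\bigoplus_{n \geq 1} C^n(\mathfrak{h}, \mathfrak{g}), d, [[\cdot, \cdot]]\right)$ with $d := l_1$ and $[[\cdot, \cdot]] := l_2$.

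Next I would make $l_1$ and $l_2$ explicit. Writing $l_1(f) = [\mu + \eta, f]_{\mathrm{NR}}$ as a difference of circle products via \eqref{circleproduct}, and using that $f \in C^p(\mathfrak{h}, \mathfrak{g})$ is homogeneous of bidegree $-1|p$ in the sense of Definition~\ref{def:bidegree} (all inputs in $\mathfrak{h}$, output in $\mathfrak{g}$), only two composites survive the bidegree constraint: $f \circledcirc \mu$ produces the sum over $i < j$ involving $[u_i * u_j]_{\mathfrak{h}}$, and $\eta \circledcirc f$ produces the sum over $i$ involving $\eta(u_i \otimes f(\cdots))$; the remaining terms $\mu \circledcirc f$ and $f \circledcirc \eta$ vanish on type grounds. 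This reproduces the stated formula for $d(f)$, with the permutations $\sigma_{1,i}, \sigma_{i,j}$ and signs coming directly from the definition of $[\cdot,\cdot]_{\mathrm{NR}}$. Expanding $l_2(f_1, f_2) = [[\pi + \rho, f_1]_{\mathrm{NR}}, f_2]_{\mathrm{NR}}$ in the same way and tracking which iterated circle products land back in $C^{p+q-1}(\mathfrak{h}, \mathfrak{g})$ yields the three-term formula \eqref{eqcor4.12}: the $\rho$-contributions give the first two summands (with $\rho$ evaluated on an output of $f_1$ or $f_2$), and the $\pi = [\cdot*\cdot]_{\mathfrak{g}}$-contribution gives the last summand $[f_1(\cdots) * f_2(\cdots)]_{\mathfrak{g}}$; the shuffle sets $S(q,1,p-1)$, $S(p,1,q-1)$, $S(p,q)$ and the Koszul signs $\epsilon(\sigma)$ are exactly those produced by the shuffle sums in the circle product together with the pseudotensor actions $(\sigma \otimes_H \mathrm{id})$.

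Finally, for the Maurer--Cartan description: Theorem~\ref{thm:4.11-pseudo} states that $T \in C^1(\mathfrak{h}, \mathfrak{g}) = \mathrm{Hom}_H(\mathfrak{h}, \mathfrak{g})$ is a $\mathcal{D}$-map of type II of $(\mathfrak{G}, \mathfrak{g}, \mathfrak{h})$ if and only if $l_1(T) + \tfrac12 l_2(T, T) + \tfrac16 l_3(T, T, T) = 0$; since $l_3 = 0$ here, this is precisely the dgLa Maurer--Cartan equation $d(T) + \tfrac12 [[T, T]] = 0$. By Example~\ref{ex:4.7}, a $\mathcal{D}$-map of $(\mathfrak{g} \bowtie \mathfrak{h}, \mathfrak{g}, \mathfrak{h})$ is exactly a deformation map of the matched pair $(\mathfrak{g}, \mathfrak{h}; \rho, \eta)$, which identifies the Maurer--Cartan elements as claimed. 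The step I expect to be the main obstacle is the explicit expansion of $l_1$ and $l_2$: it requires patiently checking which terms of the iterated Nijenhuis--Richardson brackets survive the bidegree constraints of Definition~\ref{def:bidegree} and matching the resulting shuffle sums and Koszul signs against those written in \eqref{eqcor4.12} and in the displayed formula for $d$. The conceptual part---that setting $\theta = 0$ collapses the curved $\mathfrak{L}_\infty$-pseudoalgebra of Theorem~\ref{thm:4.11-pseudo} to a dgLa---is immediate.
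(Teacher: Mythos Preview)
Your proposal is correct and follows exactly the approach the paper (implicitly) takes: the corollary is stated without proof as a direct specialization of Theorem~\ref{thm:4.11-pseudo} to the matched pair example, where $\theta = 0$ forces $l_3 = 0$ and collapses the $\mathfrak{L}_\infty$-structure to a dgLa, with the explicit formulas for $d$ and $[[\cdot,\cdot]]$ obtained by expanding the surviving Nijenhuis--Richardson brackets. One cosmetic remark: in the type II setting $l_0 = P(\Delta)$ vanishes for \emph{every} quasi-twilled pseudoalgebra (none of $\pi,\rho,\mu,\eta,\theta$ lands in $C^*(\mathfrak{h},\mathfrak{g})$), not just because $\theta = 0$; this is why Theorem~\ref{thm:4.11-pseudo} does not list an $l_0$ in the first place, but the conclusion is unaffected.
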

 Further, we show how $\mathfrak{L}_\infty$- pseudoalgebra yields deformations of $\mathcal{D}-$maps of the quasi-twilled Lie pseudoalgebra. Let $ T : \mathfrak{h} \to \mathfrak{g} $ be a $\mathcal{D}$-map of a quasi-twilled Lie pseudoalgebra $ (\mathfrak{G}, \mathfrak{g}, \mathfrak{h}) $. By Theorem \ref{thm:pseudo-mc-pseudo}, we obtain that $ T $ is a Maurer-Cartan element of the
$\left( \bigoplus_{n=0}^\infty C^{n+1}( \mathfrak{h}, \mathfrak{g}), l_0, l_1, l_2 \right). $
The twisted $ \mathfrak{L}_\infty $- pseudoalgebra structure on $ \bigoplus_{n=0}^\infty C^{n+1}( \mathfrak{h}, \mathfrak{g}) $ is given by 
\begin{align*}
l_1^T(f) = l_1(f) + l_2(T, f)+\frac{1}{2}l_3(T,T, f),& \qquad \qquad \qquad 
l_2^T(f, g) = l_2(f, g)+ l_3(T,f, g), &\\
l_3^T(f, g, h) = l_3(f, g, h),&\qquad \qquad \qquad 
l_{k \geq 4}^T = 0 ,&
\end{align*}
where $ f \in C^p(\mathfrak{h}, \mathfrak{g})$, $g \in C^q( \mathfrak{h}, \mathfrak{g})$ and $h \in C^r(\mathfrak{h},\mathfrak{g})$.
 
\begin{thm}\label{thm:4.17}
Let $T$ be a $\mathcal{D}$-map of the quasi-twilled Lie pseudoalgebra $(\mathcal{G}, \mathfrak{g}, \mathfrak{h})$. Then for a left $H$-module homomorphism $T' : \mathfrak{h} \to \mathfrak{g}$, $T + T' : \mathfrak{h} \to \mathfrak{g}$ is a $\mathcal{D}$-map of the quasi-twilled Lie pseudoalgebra $(\mathcal{G},$ $ \mathfrak{g}, \mathfrak{h})$ if and only if $T'$ is a Maurer-Cartan element of the twisted $\mathfrak{L}_\infty$- pseudoalgebra $\left(\bigoplus_{n=0}^{+\infty}C^{n+1}( \mathfrak{h}, \mathfrak{g}), l^T_1, l^T_2, l^T_3\right)$, i.e., $T'$ satisfies\begin{align*}
l_1^T(T') + \frac{1}{2!}l_2^T(T', T') + \frac{1}{3!} l_3^T(T', T', T') = 0.
\end{align*}
 \end{thm}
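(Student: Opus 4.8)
\textbf{Proof proposal for Theorem \ref{thm:4.17}.} The plan is to mirror the strategy used in the proof of Theorem \ref{thm:3.26}, but now working with the curved $\mathfrak{L}_\infty$-pseudoalgebra $\left(\bigoplus_{n=0}^\infty C^{n+1}(\mathfrak{h},\mathfrak{g}), l_1, l_2, l_3\right)$ attached to the quasi-twilled Lie pseudoalgebra $(\mathfrak{G},\mathfrak{g},\mathfrak{h})$ by Theorem \ref{thm:4.11-pseudo}, rather than the degree-$(l_0,l_1,l_2)$ structure of Section $3$. By Theorem \ref{thm:4.11-pseudo}, an $H$-module homomorphism $S:\mathfrak{h}\to\mathfrak{g}$ is a $\mathcal{D}$-map of $(\mathfrak{G},\mathfrak{g},\mathfrak{h})$ if and only if $S$ satisfies the Maurer-Cartan equation
\begin{align*}
l_1(S) + \frac{1}{2!}l_2(S,S) + \frac{1}{3!}l_3(S,S,S) = 0.
\end{align*}
First I would apply this criterion to $S = T + T'$ and expand every term using the graded symmetry and multilinearity of $l_1, l_2, l_3$ (together with $l_{k\geq 4}=0$). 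This yields the expansion
\begin{align*}
&l_1(T) + l_1(T') + \tfrac{1}{2}l_2(T,T) + l_2(T,T') + \tfrac{1}{2}l_2(T',T') \\
&\quad + \tfrac{1}{6}l_3(T,T,T) + \tfrac{1}{2}l_3(T,T,T') + \tfrac{1}{2}l_3(T,T',T') + \tfrac{1}{6}l_3(T',T',T') = 0,
\end{align*}
where the binomial coefficients $1, 3, 3, 1$ (divided by $3!$) produce the factors $\tfrac{1}{6},\tfrac{1}{2},\tfrac{1}{2},\tfrac{1}{6}$ in front of the cubic terms, and similarly $\tfrac{1}{2},1,\tfrac{1}{2}$ for the quadratic terms.

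Next I would invoke the hypothesis that $T$ is itself a $\mathcal{D}$-map, so by Theorem \ref{thm:4.11-pseudo} the terms not involving $T'$ cancel:
\begin{align*}
l_1(T) + \tfrac{1}{2}l_2(T,T) + \tfrac{1}{6}l_3(T,T,T) = 0.
\end{align*}
Subtracting this from the expansion above leaves precisely
\begin{align*}
\Big(l_1(T') + l_2(T,T') + \tfrac{1}{2}l_3(T,T,T')\Big) + \tfrac{1}{2}\Big(l_2(T',T') + l_3(T,T',T')\Big) + \tfrac{1}{6}l_3(T',T',T') = 0.
\end{align*}
Then I would recognize the three groupings as the twisted operations: by the definition of the twisted $\mathfrak{L}_\infty$-pseudoalgebra structure stated just before the theorem, $l_1^T(T') = l_1(T') + l_2(T,T') + \tfrac{1}{2}l_3(T,T,T')$, $l_2^T(T',T') = l_2(T',T') + l_3(T,T',T')$, and $l_3^T(T',T',T') = l_3(T',T',T')$. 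Substituting these gives exactly
\begin{align*}
l_1^T(T') + \tfrac{1}{2!}l_2^T(T',T') + \tfrac{1}{3!}l_3^T(T',T',T') = 0,
\end{align*}
which is the Maurer-Cartan equation for $T'$ in the twisted structure, establishing both implications simultaneously since every step is an equivalence.

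The main obstacle, and the only genuinely non-trivial point, is justifying that the grouped expressions really coincide with the twisted operations $l_k^T$ as \emph{defined}; this is a bookkeeping verification that the combinatorial coefficients arising from expanding the Maurer-Cartan polynomial of $T+T'$ match the prescription $l_k^T(x_1,\dots,x_k)=\sum_{n\geq 0}\tfrac{1}{n!}l_{k+n}(T^{\otimes n}\otimes x_1\otimes\cdots\otimes x_k)$ of Definition \ref{def:MC-twist}, truncated at $l_{\geq 4}=0$. Since the underlying $\mathfrak{L}_\infty$-pseudoalgebra has only $l_1, l_2, l_3$ nonzero, the sum defining $l_k^T$ terminates quickly, and the identity is a finite check; I would note that this is the pseudoalgebra analogue of the standard fact that twisting an $\mathfrak{L}_\infty$-algebra by a Maurer-Cartan element is compatible with shifting Maurer-Cartan elements, exactly as in the classical case treated in \cite{YL} and \cite{Voronov}. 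No separate convergence or $H$-linearity issues arise because all sums are finite and each $l_k$ is already $H^{\otimes k}$-linear by Theorem \ref{thm:4.11-pseudo}.
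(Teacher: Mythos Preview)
Your proposal is correct and follows essentially the same approach as the paper's own proof: apply Theorem \ref{thm:4.11-pseudo} to $T+T'$, expand by multilinearity and graded symmetry, cancel the Maurer--Cartan equation for $T$, and identify what remains with the twisted operations $l_k^T$. The paper's version is terser and skips the explicit binomial bookkeeping you spell out, but the argument is identical in structure.
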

\begin{proof}
By Theorem \ref{thm:4.11-pseudo}, $T + T'$ is a $ \mathcal{D}$-map if and only if
\begin{align*}
 l_1(T + T') + \frac{1}{2!} l_2(T + T', T + T') + \frac{1}{3!} l_3(T + T', T + T', T + T') = 0.
\end{align*}
Moreover, since $T$ is a $ \mathcal{D}$-map, the above condition simplifies to
\begin{align*}
l_1(T') + l_2(T, T') + \frac{1}{2} l_3(T, T, T') + \frac{1}{2} l_2(T', T') + \frac{1}{2} l_3(T, T', T') + \frac{1}{6} l_3(T', T', T') = 0.
\end{align*}
Thus, $T + T' : \mathfrak{h} \to \mathfrak{g}$ is a $ \mathcal{D}$-map of the quasi-twilled Lie pseudoalgebra $(\mathcal{G}, \mathfrak{g}, \mathfrak{h})$ if and only if $T'$ is a Maurer-Cartan element of the twisted $\mathfrak{L}_\infty$-pseudoalgebra $\left(\bigoplus_{n=0}^{+\infty}C^{n+1}(\mathfrak{h}, \mathfrak{g}), l^T_1, l^T_2, l^T_3\right)$.
\end{proof}
\noindent Next, we apply Theorem \ref{thm:4.17} to Corollary \ref{cor:4.16-pseudo}, we obtain the differential graded Lie algebra that gives deformations of deformation maps of a matched pair of Lie pseudoalgebra.
\begin{cor}\label{cor:4.18}
Let $ T : \mathfrak{h} \to \mathfrak{g} $ be a deformation map of a matched pair $ (\mathfrak{g}, \mathfrak{h}; \rho, \eta) $ of Lie pseudoalgebras.
Then
$
\left(\left( \bigoplus_{n=1}^{+\infty} C^n(\mathfrak{h}, \mathfrak{g}) \right), d^T, [[\cdot,\cdot]] \right)
$
is a differential graded Lie algebra, where $ [[\cdot,\cdot]] $ is given by Eq. \eqref{eqcor4.12}, and $ d^T $ is given as follows
\begin{align*}
&d^T(f) (u_1, \cdots, u_{p+1}) \\=& \sum_{i=1}^{p+1} (-1)^{p+i} (\sigma_{1,i} \otimes_H \mathrm{id})\eta(u_i\otimes f (u_1\otimes \cdots\otimes \hat{u_i}, \cdots\otimes u_{p+1})) \\&+ \sum_{i<j} (-1)^{p+i+j-1}(\sigma_{i,j} \otimes_H \mathrm{id}) f ([{u_i}* {u_j}]_{\mathfrak{g}}\otimes u_1\otimes \cdots\otimes \hat{u_i}\otimes \cdots\otimes \hat{u_j}\otimes \cdots\otimes u_{p+1}) \\
&+ \sum_{i=1}^{p+1} (-1)^{p+i}(\sigma_{i,1} \otimes_H \mathrm{id}) T\left( \rho(f (u_1\otimes \cdots\otimes \hat{u_i}\otimes \cdots\otimes u_{p+1})\otimes u_i) \right) \\
& - \sum_{i=1}^{p+1} (-1)^{i+p} (\sigma_{1,i} \otimes_H \mathrm{id})f\left( \rho(T(u_i)\otimes u_1\otimes \cdots\otimes \hat{u_i}\otimes \cdots\otimes u_{p+1} \right) \\
& + \sum_{i=1}^{p+1} (-1)^{i+p} (\sigma_{1,i} \otimes_H \mathrm{id})[{T(u_i)}* f(u_1\otimes \cdots\otimes \hat{u_i}\otimes \cdots\otimes u_{p+1})]_\mathfrak{g}.
\end{align*}
Moreover, for a left $H$-module homomorphism $ T' : \mathfrak{h} \to \mathfrak{g} $, $ T + T' $ is a deformation map if and only if $ T' $ is a Maurer-Cartan element of the differential graded Lie algebra
$
\left( \bigoplus_{n=1}^{+\infty} C^n( \mathfrak{h}, \mathfrak{g}), d^T, [[\cdot,\cdot]] \right).
$
\end{cor}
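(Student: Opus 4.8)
The plan is to deduce this corollary from Corollary~\ref{cor:4.16-pseudo}, Theorem~\ref{thm:4.11-pseudo}, and Theorem~\ref{thm:4.17}, exploiting the special feature of a matched pair that the interaction term $\theta$ vanishes. First I would recall that for the quasi-twilled Lie pseudoalgebra $(\mathfrak{g}\bowtie\mathfrak{h},\mathfrak{g},\mathfrak{h})$ of Example~\ref{exmax}, the decomposition $\Omega=\pi+\rho+\mu+\eta+\theta$ has $\pi=[\cdot*\cdot]_\mathfrak{g}$, $\mu=[\cdot*\cdot]_\mathfrak{h}$, $\rho$ and $\eta$ the two action maps, and $\theta=0$ (modulo the argument-swap identification $\eta(u)*(y)=-((12)\otimes_H\mathrm{id})\,\eta(y\otimes u)$ already recorded in Example~\ref{exmax}). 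Consequently, in the curved $\mathfrak{L}_\infty$-pseudoalgebra of Theorem~\ref{thm:4.11-pseudo} associated with this data, the ternary operation $l_3=[[[\theta,\cdot]_{\mathrm{NR}},\cdot]_{\mathrm{NR}},\cdot]_{\mathrm{NR}}$ is identically zero and $l_{k\ge3}=0$; the remaining operations $l_1(f)=[\mu+\eta,f]_{\mathrm{NR}}$ and $l_2(f,g)=[[\pi+\rho,f]_{\mathrm{NR}},g]_{\mathrm{NR}}$ are precisely, up to the sign $(-1)^{p-1}$, the differential $d$ and the bracket $[[\cdot,\cdot]]$ of Corollary~\ref{cor:4.16-pseudo}. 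Thus the curved $\mathfrak{L}_\infty$-pseudoalgebra is exactly the differential graded Lie algebra of that corollary, and a deformation map $T$ of the matched pair is precisely a Maurer--Cartan element of it.

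Next I would invoke Theorem~\ref{thm:4.17}: since $T$ is a Maurer--Cartan element, the twisted operations $l_1^T(f)=l_1(f)+l_2(T,f)+\tfrac12 l_3(T,T,f)$, $l_2^T(f,g)=l_2(f,g)+l_3(T,f,g)$, $l_3^T=l_3$ again define an $\mathfrak{L}_\infty$-pseudoalgebra. Because $l_3=0$, these collapse to $l_1^T(f)=l_1(f)+l_2(T,f)$, $l_2^T=l_2$, and $l_{k\ge3}^T=0$; an $\mathfrak{L}_\infty$-pseudoalgebra carrying only a unary and a binary operation is a differential graded Lie algebra, with $(d^T)^2=0$ and the graded Leibniz rule obtained for free from the $\mathfrak{L}_\infty$-axioms of the twisted structure. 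Its bracket is $l_2^T=l_2=[[\cdot,\cdot]]$, so only the differential is new. I would then expand $l_1^T(f)=[\mu+\eta,f]_{\mathrm{NR}}+[[\pi+\rho,T]_{\mathrm{NR}},f]_{\mathrm{NR}}$ in components over shuffle permutations: the first summand reproduces the two sums involving $\eta$ and $[\cdot*\cdot]_\mathfrak{h}$ in the stated formula for $d^T$, namely the differential $d$ of Corollary~\ref{cor:4.16-pseudo}, while the second summand, an iterated Nijenhuis--Richardson bracket of $\pi+\rho$ first with $T$ and then with $f$, produces the three sums involving $\rho$ and $[\cdot*\cdot]_\mathfrak{g}$; matching the Koszul signs against the coefficients $(-1)^{p+i}$ and $(-1)^{p+i+j-1}$ completes the identification $l_1^T(f)=(-1)^{p-1} d^T(f)$.

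The final assertion then follows by combining Theorem~\ref{thm:4.17} with the collapse $l_3^T=0$: a left $H$-module homomorphism $T':\mathfrak{h}\to\mathfrak{g}$ makes $T+T'$ a $\mathcal{D}$-map if and only if $T'$ is a Maurer--Cartan element of $\big(\bigoplus_n C^{n+1}(\mathfrak{h},\mathfrak{g}),l_1^T,l_2^T,l_3^T\big)$, and since $l_3^T=0$ this is the Maurer--Cartan equation $d^T(T')+\tfrac12[[T',T']]=0$ of the differential graded Lie algebra $\big(\bigoplus_n C^n(\mathfrak{h},\mathfrak{g}),d^T,[[\cdot,\cdot]]\big)$.

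The step I expect to be the main obstacle is the component-by-component expansion in the second paragraph: one must carefully unwind the iterated $[\cdot,\cdot]_{\mathrm{NR}}$ of $\pi+\rho$ with $T$ and then with $f$ over the relevant $(1,1,p-1)$- and $(1,p)$-type shuffles, track where $T$ is inserted (inside $\rho$, inside $\pi$, or in an outer slot of the bracket on $\mathfrak{g}$), and reconcile the resulting Koszul signs and $\sigma_{1,i}$, $\sigma_{i,1}$, $\sigma_{i,j}$ permutations with the explicitly stated signs $(-1)^{p+i}$, $(-1)^{p+i+j-1}$ — the identification $\eta(u)*(y)=-((12)\otimes_H\mathrm{id})\,\eta(y\otimes u)$ between the matched-pair and quasi-twilled conventions has to be threaded through consistently. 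I would not attempt to verify $(d^T)^2=0$ or the Leibniz rule by hand, as these are consequences of the already-established $\mathfrak{L}_\infty$-pseudoalgebra structure of the twist.
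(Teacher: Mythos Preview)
Your proposal is correct and follows exactly the approach the paper indicates: the paper does not give an explicit proof of this corollary, but simply introduces it with the sentence ``we apply Theorem~\ref{thm:4.17} to Corollary~\ref{cor:4.16-pseudo},'' which is precisely your strategy of specializing the twisted $\mathfrak{L}_\infty$-structure to the matched-pair case where $\theta=0$ kills $l_3$. Your treatment is in fact more detailed than the paper's, since you spell out why the collapse to a differential graded Lie algebra occurs and what the component expansion of $l_1^T$ must look like.
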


\begin{rem}
Apply Theorem \ref{thm:4.17} to Corollaries \ref{cor:4.12-pseudo}-\ref{cor:4.15}, one can also obtain the differential graded Lie algebras governing deformations of relative Rota-Baxter operators, twisted Rota-Baxter operators, and Reynolds operators. See \cite{Das, Das2, TBGS} for more details.
\end{rem}
\subsection{ Cohomologies of $\mathcal{D}$-maps}

In this subsection, we introduce a cohomology theory of a $\mathcal{D}$-map in the context of Lie pseudoalgebras, using an analog of the Chevalley-Eilenberg cohomology for Lie pseudoalgebra. This cohomology unifies \emph{the cohomologies of relative Rota-Baxter operators, twisted Rota-Baxter operators, and Reynolds operators} in the pseudoalgebra setting. It also provides a framework to define a cohomology theory for deformation maps of matched pairs of Lie pseudoalgebras.
\begin{lem}\label{lem:4.20-pseudo}
Let $ T : \mathfrak{h} \to \mathfrak{g} $ be a $\mathcal{D}$-map of a quasi-twilled pseudoalgebra $ (\mathfrak{G}, \mathfrak{g}, \mathfrak{h}) $. Then the pseudobracket on $ \mathfrak{h} $ is defined by:
$$
\mu^T(u \otimes v) = \mu(u \otimes v) + \rho(T(u) \otimes v) - ((12) \otimes_H \mathrm{id})\,\rho(T(v) \otimes u) + \theta(T(u) \otimes T(v)),
$$
for all $ u, v \in \mathfrak{h} $. This defines a Lie pseudoalgebra structure on $ \mathfrak{h} $, denoted by $ (\mathfrak{h}, \mu^T) $. Moreover, the map $ \zeta : \mathfrak{h} \otimes \mathfrak{g} \to H \otimes_H \mathfrak{g} $, defined by:
$$
\zeta(v \otimes x) = -\eta(x \otimes v) - \pi(x \otimes T(v)) + T(\rho(x \otimes v)) - T(((12) \otimes_H \mathrm{id})\,\theta(T(v) \otimes x)),
$$
defines a representation of the Lie pseudoalgebra $ (\mathfrak{h}, \mu^T) $ on the $ H $-module $ \mathfrak{g} $.
\end{lem}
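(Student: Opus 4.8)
The plan is to derive both assertions from Theorem~\ref{thm:4.10-pseudo} together with the structure theorem for quasi-twilled Lie pseudoalgebras (Theorem~\ref{QTPC}), rather than expanding the pseudo-Jacobi and representation identities by hand. The first observation is that, when $T$ is a $\mathcal{D}$-map, the ``extra'' sixth component $\xi^T$ of the twisted bracket $\Omega^T$ of Lemma~\ref{lem:4.9} vanishes: substituting the defining identity of a $\mathcal{D}$-map into the formula for $\xi^T(u\otimes v)$ from Theorem~\ref{thm:4.10-pseudo} collapses it to
\[
\xi^T(u\otimes v)=T\big(\theta(T(u)\otimes T(v))+((12)\otimes_H\mathrm{id})\,\theta(T(v)\otimes T(u))\big),
\]
which is $0$ by the skew-symmetry of $\theta$. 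Hence $\Omega^T=\pi^T+\rho^T+\mu^T+\eta^T+\theta^T$ has the shape of a standard quasi-twilled pseudobracket, and by the concluding statement of Theorem~\ref{thm:4.10-pseudo} the triple $((\mathfrak{G},\Omega^T),\mathfrak{g},\mathfrak{h})$ is again a quasi-twilled Lie pseudoalgebra; in particular $\mathfrak{h}$ is a Lie subpseudoalgebra of $(\mathfrak{G},\Omega^T)$ whose induced pseudobracket is precisely the map $\mu^T$ recorded in the statement.

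Next I would feed $((\mathfrak{G},\Omega^T),\mathfrak{g},\mathfrak{h})$ into Theorem~\ref{QTPC}, obtaining the compatibility conditions (PC1)--(PC8) for the twisted structure maps $\pi^T,\rho^T,\mu^T,\eta^T,\theta^T$. Read for $\mu^T$, conditions (PC1) and (PC8) are exactly skew-symmetry and the pseudo-Jacobi identity, so $(\mathfrak{h},\mu^T)$ is a Lie pseudoalgebra; this proves the first assertion. For the second, (PC6) for the twisted maps says -- as spelled out in Remark~\ref{rem:PC-interpretations-pseudo} -- that $\eta^T$ defines a representation of $(\mathfrak{h},\mu^T)$ on the $H$-module $\mathfrak{g}$. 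Passing from the ``$\eta$-form'' of this representation to its action-form $\mathfrak{h}\otimes\mathfrak{g}\to H^{\otimes2}\otimes_H\mathfrak{g}$ -- equivalently, reading off the $\mathfrak{g}$-component of $\Omega^T\big((0,v),(x,0)\big)=-((12)\otimes_H\mathrm{id})\,\Omega^T\big((x,0),(0,v)\big)$ via the convention $\eta(u)*y=-((12)\otimes_H\mathrm{id})\eta(y\otimes u)$ of Example~\ref{exmax} -- and then substituting the explicit formula for $\eta^T$ from Theorem~\ref{thm:4.10-pseudo}, using the skew-symmetry of $\pi$ and $\theta$ and the $H$-linearity of $T$ to move the transposition past $T$, should reproduce exactly the map $\zeta$ of the statement. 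Hence $\zeta$ is a representation of $(\mathfrak{h},\mu^T)$ on $\mathfrak{g}$.

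The conceptual content lies entirely in Theorems~\ref{thm:4.10-pseudo} and~\ref{QTPC}; the only genuine work is bookkeeping, namely verifying $\xi^T=0$ and matching the transposition-$(12)$ factors when identifying the action-form of $\eta^T$ with $\zeta$, and I expect this to be the main obstacle, since the relative tensor product $\otimes_H$ and the intertwining of $T$ with the symmetric-group action must be tracked with care. An alternative route, which I would avoid, is to bypass the twisting machinery and prove the two assertions directly: expand the Jacobiator of $\mu^T$ and the representation identity for $\zeta$ in terms of $\pi,\rho,\mu,\eta,\theta$ and $T$, then kill all remaining terms using the $\mathcal{D}$-map identity and the original (PC1)--(PC8); this is considerably longer and amounts to re-deriving Theorem~\ref{thm:4.10-pseudo} by hand.
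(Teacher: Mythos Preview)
Your proposal is correct and follows essentially the same approach as the paper: the paper's proof consists of the single sentence ``The proof of this lemma follows directly by considering Theorem~\ref{thm:4.10-pseudo} and Theorem~\ref{QTPC},'' and your proposal simply unpacks what this means---showing $\xi^T=0$ so that $((\mathfrak{G},\Omega^T),\mathfrak{g},\mathfrak{h})$ is quasi-twilled, then reading off (PC1), (PC8), (PC6) from Theorem~\ref{QTPC} and translating $\eta^T$ into the action-form $\zeta$. The bookkeeping concerns you flag (tracking the transposition through $T$ and $\otimes_H$) are real but routine, and the paper does not spell them out either.
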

\begin{proof}
The proof of this lemma follows directly by considering Theorem \ref{thm:4.10-pseudo} and Theorem \ref{QTPC}
\end{proof}
\noindent Let $ d^T_{CE} : C^n(\mathfrak{h}, \mathfrak{g}) \to C^{n+1}( \mathfrak{h}, \mathfrak{g}) $ be the corresponding Chevalley-Eilenberg coboundary operator of the Lie pseudoalgebra $ (\mathfrak{h}, \mu^T) $ with coefficients in the representation $ (\mathfrak{g}, \zeta) $. More precisely, for all $ f \in C^p( \mathfrak{h}, \mathfrak{g}) $ and $ u_1, \dots, u_{p+1} \in \mathfrak{h} $, we have
\begin{align*}
&d^T_{CE} f(u_1 \otimes \cdots \otimes u_{p+1}) \\
= &\sum_{i=1}^{p+1} (-1)^{i+1} \zeta(u_i\otimes f(u_1 \otimes \cdots \widehat{u_i} \cdots \otimes u_{p+1})) \\
&+ \sum_{i<j} (-1)^{i+j} f\big( \mu^T(u_i \otimes u_j) \otimes u_1 \otimes \cdots \widehat{u_i} \cdots \widehat{u_j} \cdots \otimes u_{p+1} \big) \\
= &\sum_{i=1}^{p+1} (-1)^{i} (\sigma_{1,i} \otimes_H \mathrm{id})\,\eta\big(f(u_1 \otimes \cdots \widehat{u_i} \cdots \otimes u_{p+1}) \otimes u_i\big) \\
&+ \sum_{i=1}^{p+1} (-1)^{i+1} \pi\big(T(u_i) \otimes f(u_1 \otimes \cdots \widehat{u_i} \cdots \otimes u_{p+1})\big) \\
&+ \sum_{i=1}^{p+1} (-1)^{i+1} (\sigma_{1,i} \otimes_H \mathrm{id})T\Big( \rho\big(f(u_1 \otimes \cdots \widehat{u_i} \cdots \otimes u_{p+1}) \otimes u_i\big) \Big) \\
&- \sum_{i=1}^{p+1} (-1)^{i+1} (\sigma_{1,i} \otimes_H \mathrm{id})T\Big(\theta\big(T(u_i) \otimes f(u_1 \otimes \cdots \widehat{u_i} \cdots \otimes u_{p+1})\big) \Big) \\
&+ \sum_{i<j} (-1)^{i+j} f\big( \mu(u_i \otimes u_j) \otimes u_1 \otimes \cdots \widehat{u_i} \cdots \widehat{u_j} \cdots \otimes u_{p+1} \big) \\
&+ \sum_{i<j} (-1)^{i+j} f\Big( \rho(T(u_i) \otimes u_j) - ((12) \otimes_H \mathrm{id})\,\rho(T(u_j) \otimes u_i) \otimes u_1 \otimes \cdots \widehat{u_i} \cdots \widehat{u_j} \cdots \otimes u_{p+1} \Big) \\
&+ \sum_{i<j} (-1)^{i+j} f\big( \theta(T(u_i) \otimes T(u_j)) \otimes u_1 \otimes \cdots \widehat{u_i} \cdots \widehat{u_j} \cdots \otimes u_{p+1} \big),
\end{align*}
The second sum runs over all $ i < j $, and $ \widehat{u_i}, \widehat{u_j} $ indicate the omission of $ u_i, u_j $. Further, we define the cohomology of a $\mathcal{D}$-map $ T : \mathfrak{h} \to \mathfrak{g} $. Let $ (\mathfrak{G}, \mathfrak{g}, \mathfrak{h}) $ be a quasi-twilled pseudoalgebra over a cocommutative Hopf algebra $ H $. Define the space of $ n $-cochains $ C^n(T) $ associated with the $\mathcal{D}$-map $ T $ as follows 
\begin{align*}
 C^n(T) = 
 \begin{cases}
 0, & \text{for } n = 0, \\
 C^0(\mathfrak{h}, \mathfrak{g}) = \mathfrak{g}, & \text{for } n = 1, \\
 C^{n-1}(\mathfrak{h}, \mathfrak{g}), & \text{for } n \geq 2,
 \end{cases}
\end{align*}
The cohomology of the $\mathcal{D}$-map $ T $ is defined as the cohomology of the cochain complex
$$\left( C^*(T) = \bigoplus_{n=0}^\infty C^n(T),\ d^T_{CE} \right).
$$ 
\begin{defn}\label{def:4.20-pseudo}
Let $ (\mathfrak{G}, \mathfrak{g}, \mathfrak{h}) $ be a quasi-twilled pseudoalgebra and let $ T : \mathfrak{h} \to \mathfrak{g} $ be a $\mathcal{D}$-map of $ (\mathfrak{G}, \mathfrak{g}, \mathfrak{h}) $. The cohomology of the cochain complex $ \left(\bigoplus_{n=0}^\infty C^n(T), d^T_{CE} \right) $ is defined to be the cohomology of the $\mathcal{D}$-map $ T $.
\end{defn}\noindent Denote the set of $ n $-cocycles by $ Z^n(T) $, the set of $ n $-coboundaries by $ B^n(T) $, and the $ n $-th cohomology group by $H^n(T) = Z^n(T)/B^n(T),$ for $n \geq 0.$ It is clear that $ x \in C^0(\mathfrak{h}, \mathfrak{g}) = \mathfrak{g} $ is a $1$-cocycle if and only if
\begin{align*}
-((12) \otimes_H \mathrm{id})\,\eta(x \otimes u) + \pi(T(u) \otimes x) + T(\rho(x \otimes u)) - T(((12) \otimes_H \mathrm{id})\,\theta(T(u) \otimes x)) = 0, \quad \forall u \in \mathfrak{h}.
\end{align*}
Similarly, $ f \in C^1(\mathfrak{h}, \mathfrak{g}) $ is closed if and only if
\begin{align*}
& -((12) \otimes_H \mathrm{id})\,\eta(f(v) \otimes u) + \eta(f(u) \otimes v) + \pi(T(u) \otimes f(v)) - ((12) \otimes_H \mathrm{id})\,\pi(T(v) \otimes f(u)) \\
& + T(\rho(f(v) \otimes u)) - ((12) \otimes_H \mathrm{id})\,T(\rho(f(u) \otimes v)) \\
& + T(\theta(T(u) \otimes f(v))) - ((12) \otimes_H \mathrm{id})\,T(\theta(T(v) \otimes f(u))) \\
& - f(\mu(u \otimes v) + \theta(T(u) \otimes T(v))) \\
& + f(\rho(T(u) \otimes v) - ((12) \otimes_H \mathrm{id})\,\rho(T(v) \otimes u)) = 0,
\end{align*}
for all $ u, v \in \mathfrak{h} $.

\noindent We now provide an intrinsic interpretation of the above coboundary operator. Let $ T : \mathfrak{h} \to \mathfrak{g} $ be a $\mathcal{D}$-map of a quasi-twilled pseudoalgebra $ (\mathfrak{G}, \mathfrak{g}, \mathfrak{h}) $ over a cocommutative Hopf algebra $ H $. The twisted $ \mathfrak{L}_\infty $-pseudoalgebra
$$
\left( \bigoplus_{n=0}^\infty C^{n+1}(\mathfrak{h}, \mathfrak{g}),\ l^T_1,\ l^T_2,\ l^T_3 \right)
$$
controls the infinitesimal and formal deformations of the $\mathcal{D}$-map $ T $. Parallel to Proposition~\ref{prop:3.24}, we have the following key result.
\begin{prop}\label{prop:4.21-pseudo}
With the above notations, for any $ f \in C^p(\mathfrak{h}, \mathfrak{g}) $, one has
$
l^T_1(f) = (-1)^{p-1} d^T_{CE} f.
$
\end{prop}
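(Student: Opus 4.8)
The plan is to follow the proof of Proposition~\ref{prop:3.24} essentially verbatim, replacing the two‑bracket twisted differential of the Type~I case by the three‑bracket twisted differential of Theorem~\ref{thm:4.11-pseudo}. First I would unwind the definition of the twisted unary operation recorded just before Theorem~\ref{thm:4.17},
\[
l^T_1(f) = l_1(f) + l_2(T,f) + \tfrac12\,l_3(T,T,f),
\]
and substitute the explicit formulas of Theorem~\ref{thm:4.11-pseudo} to obtain
\[
l^T_1(f) = [\mu+\eta,\,f]_{\mathrm{NR}} + \big[[\pi+\rho,\,T]_{\mathrm{NR}},\,f\big]_{\mathrm{NR}} + \tfrac12\big[[[\theta,\,T]_{\mathrm{NR}},\,T]_{\mathrm{NR}},\,f\big]_{\mathrm{NR}}.
\]
Since $\mu+\eta$, $[\pi+\rho,T]_{\mathrm{NR}}$ and $[[\theta,T]_{\mathrm{NR}},T]_{\mathrm{NR}}$ all live in $C^2(\mathfrak{g}\boxplus\mathfrak{h},\mathfrak{g}\boxplus\mathfrak{h})$, bilinearity of the Nijenhuis--Richardson bracket in its first slot collapses this to $l^T_1(f) = [\Psi_T,\,f]_{\mathrm{NR}}$ with $\Psi_T := (\mu+\eta) + [\pi+\rho,\,T]_{\mathrm{NR}} + \tfrac12[[\theta,\,T]_{\mathrm{NR}},\,T]_{\mathrm{NR}}$.

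Next I would identify $\Psi_T$ with $\mu^T+\eta^T$. Expanding $\Omega^T = e^{[\cdot,T]_{\mathrm{NR}}}\Omega$ as the series $\sum_{n\ge 0}\tfrac1{n!}[[\cdots[\Omega,T]_{\mathrm{NR}},\cdots]_{\mathrm{NR}},T]_{\mathrm{NR}}$ and sorting terms by bidegree (in the sense of Definition~\ref{def:bidegree}), I would use that $\|\mu\|=\|\eta\|=0|1$, $\|\pi\|=\|\rho\|=1|0$, $\|\theta\|=2|-1$ and $\|T\|=-1|1$, so that by the additivity of bidegree under the $NR$-bracket each bracket with $T$ shifts the bidegree by $-1|1$. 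Tracking which terms land in bidegree $0|1$ then shows exactly that the $0|1$ component of $\Omega^T$ equals $(\mu+\eta)$ (from the $n=0$ term of $\mu+\eta$) $+\,[\pi+\rho,T]_{\mathrm{NR}}$ (from the $n=1$ term of $\pi+\rho$) $+\,\tfrac1{2!}[[\theta,T]_{\mathrm{NR}},T]_{\mathrm{NR}}$ (from the $n=2$ term of $\theta$), i.e.\ it equals $\Psi_T$. On the other hand, of the six pieces $\pi^T,\rho^T,\mu^T,\eta^T,\theta^T,\xi^T$ listed in Theorem~\ref{thm:4.10-pseudo} only $\mu^T$ and $\eta^T$ have bidegree $0|1$ (note $\xi^T$ sits in bidegree $-1|2$, and in any case vanishes since $T$ is a $\mathcal{D}$‑map). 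Hence $\Psi_T=\mu^T+\eta^T$, so $l^T_1(f) = [\mu^T+\eta^T,\,f]_{\mathrm{NR}}$.

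Finally, for $f\in C^p(\mathfrak{h},\mathfrak{g})$ I would expand $[\mu^T+\eta^T,\,f]_{\mathrm{NR}}$ through the circle product of Eq.~\eqref{circleproduct}. Since $f$ is defined on $\mathfrak{h}^{\otimes p}$ and is $\mathfrak{g}$‑valued, $\mu^T$ (being $\mathfrak{h}$‑valued) cannot be fed into the first slot of $f$ and $\eta^T$ (needing a $\mathfrak{g}$‑argument) cannot receive the $\mathfrak{h}$‑valued output of anything, so the only surviving contributions are $f\circledcirc\mu^T$ and $\eta^T\circledcirc f$. Using the relation between $\eta^T$ and the representation $\zeta$ of $(\mathfrak{h},\mu^T)$ on $\mathfrak{g}$ provided by Lemma~\ref{lem:4.20-pseudo} together with Theorem~\ref{thm:4.10-pseudo} (namely that $\zeta(v\otimes x)$ is, up to the transposition action $((12)\otimes_H\mathrm{id})$, minus $\eta^T(x\otimes v)$), these two sums are precisely the two sums $\sum_i(-1)^{i+1}\zeta(u_i\otimes f(\cdots))$ and $\sum_{i<j}(-1)^{i+j}f(\mu^T(u_i\otimes u_j)\otimes\cdots)$ defining $d^T_{\mathrm{CE}}f$, once signs are reconciled; the factor $(-1)^{(2-1)(p-1)}$ in the $NR$‑bracket together with the Koszul signs of the shuffle sums produces exactly the global $(-1)^{p-1}$ relating $[\mu^T+\eta^T,f]_{\mathrm{NR}}$ to $d^T_{\mathrm{CE}}f$, giving $l^T_1(f)=(-1)^{p-1}d^T_{\mathrm{CE}}f$.

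The step I expect to be the real work is this last one: matching, term by term, the shuffle sums coming from $\circledcirc$ (with their Koszul signs and the $(\sigma\otimes_H\mathrm{id})$ factors) against the $\sum_i(-1)^{i+1}$ and $\sum_{i<j}(-1)^{i+j}$ sums in the explicit formula for $d^T_{\mathrm{CE}}$, and in particular handling carefully the transposition actions that occur inside expressions of the form $T(((12)\otimes_H\mathrm{id})(\cdots))$ when one passes between $\eta^T$ and $\zeta$; the hypothesis that $T$ is a $\mathcal{D}$‑map enters here in an essential way, since it is what makes $\Omega^T$ a genuine skew‑symmetric pseudobracket and hence makes the decomposition $\Omega^T=\pi^T+\rho^T+\mu^T+\eta^T+\theta^T$ and the identification of $\zeta$ available. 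Everything else reduces to bilinearity of $[\cdot,\cdot]_{\mathrm{NR}}$, a bidegree count, and direct appeals to Theorems~\ref{thm:4.10-pseudo} and \ref{thm:4.11-pseudo} and Lemma~\ref{lem:4.20-pseudo}.
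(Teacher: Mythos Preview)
Your proposal is correct and follows exactly the approach the paper intends: since the paper states Proposition~\ref{prop:4.21-pseudo} without proof and explicitly labels it ``parallel to Proposition~\ref{prop:3.24}'', the expected argument is precisely the three-term analogue of the chain $l_1^D(f)=[\pi+\rho,f]_{\mathrm{NR}}+[[\mu+\eta,D]_{\mathrm{NR}},f]_{\mathrm{NR}}=[\pi^D+\rho^D,f]_{\mathrm{NR}}=(-1)^{p-1}d^D_{\mathrm{CE}}(f)$ used there, with $\mu^T+\eta^T$ playing the role of $\pi^D+\rho^D$. Your bidegree bookkeeping for the identification $\Psi_T=\mu^T+\eta^T$ is a clean way to organize what the paper leaves implicit; one small clarification is that neither the decomposition of $\Omega^T$ in Theorem~\ref{thm:4.10-pseudo} nor the formula for $\mu^T,\eta^T$ actually requires $T$ to be a $\mathcal{D}$-map (Lemma~\ref{lem:4.9} gives $\Omega^T$ as a pseudobracket unconditionally), so the identity $l_1^T(f)=[\mu^T+\eta^T,f]_{\mathrm{NR}}$ holds regardless, and the $\mathcal{D}$-map hypothesis enters only to guarantee that $(\mathfrak{h},\mu^T)$ is a Lie pseudoalgebra with $\zeta$ a representation, so that $d^T_{\mathrm{CE}}$ is a genuine coboundary operator.
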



\noindent Definition~\ref{def:4.20-pseudo} recovers the existing cohomology theories of \emph{relative Rota-Baxter operators, twisted Rota-Baxter operators, and Reynolds operators} in the Lie pseudoalgebras.

\begin{ex}
Consider the quasi-twilled pseudoalgebra $ (\mathfrak{G}, \mathfrak{g}, \mathfrak{h}) = (\mathfrak{g} \ltimes_\rho \mathfrak{h}, \mathfrak{g}, \mathfrak{h}) $, constructed from an action $ \rho : \mathfrak{g} \otimes \mathfrak{h} \to H^{\otimes2} \otimes_H \mathfrak{h} $ of $ \mathfrak{g} $ on $ \mathfrak{h} $. Let $ T : \mathfrak{h} \to \mathfrak{g} $ be a relative Rota-Baxter operator of weight $ p \in \mathbf{k} $ on $ \mathfrak{g} $ with respect to the representation $ (\mathfrak{h}; \rho) $. Then $ (\mathfrak{h}, \mu^T) $ becomes a Lie pseudoalgebra, where the pseudobracket $ \mu^T \in C^2(\mathfrak{h}, \mathfrak{h}) $ is given by 
$$\mu^T(u \otimes v) = p \cdot \mu(u \otimes v) + \rho(T(u) \otimes v) - ((12) \otimes_H \mathrm{id})\,\rho(T(v) \otimes u), \quad \forall u, v \in \mathfrak{h}.$$ Moreover, $ (\mathfrak{h}, \mu^T) $ acts on $ \mathfrak{g} $ via the map $ \zeta : \mathfrak{h} \otimes \mathfrak{g} \to H \otimes_H \mathfrak{g} $, defined by 
$$\zeta(v \otimes x) = \pi(T(v) \otimes x) + T(\rho(x \otimes v)), \quad \forall v \in \mathfrak{h},\ x \in \mathfrak{g},
$$
where $ \pi $ is the pseudobracket on $ \mathfrak{g} $. The corresponding Chevalley-Eilenberg cohomology of $ (\mathfrak{h}, \mu^T) $ with coefficients in $ (\mathfrak{g}, \zeta) $ is taken to be the cohomology of the relative Rota-Baxter operator $ T $ of weight $ p $.
\end{ex}
\noindent Similar examples can be constructed for $\mathcal{O}$-operators, twisted Rota-Baxter operators, and Reynolds-type operators on the Lie pseudoalgebras, showing how Definition~\ref{def:4.20-pseudo} unifies these constructions.

\begin{defn}\label{def:4.23}
Let $ T : \mathfrak{h} \to \mathfrak{g} $ be a deformation map of a matched pair $ (\mathfrak{g}, \mathfrak{h}; \rho, \eta) $ of Lie pseudoalgebras. Then $ (\mathfrak{h}, \mu^T) $ is a Lie pseudoalgebra, where the pseudobracket is defined by
\begin{align*}
 \mu^T (u\otimes v)= \mu(u\otimes v) + \rho(T(u) \otimes v) - ((12) \otimes_H \mathrm{id}) \rho(T(v)\otimes u), \quad \forall u,v \in \mathfrak{h}.
\end{align*}
Moreover, this Lie pseudoalgebra acts on $ \mathfrak{g} $ via a representaion $ \zeta : \mathfrak{h}\otimes\mathfrak{g} \to H \otimes_H\mathfrak{g}$, given by
\begin{align*}
 \zeta(v\otimes x )= -((12) \otimes_H \mathrm{id}) \eta(x\otimes v) + ((12) \otimes_H \mathrm{id})[T(v)* x] + ((12) \otimes_H \mathrm{id})T( \rho(x \otimes v)), \quad \forall v \in \mathfrak{h}, x \in \mathfrak{g}.
\end{align*}
\end{defn}
\noindent We now define the cohomology associated with such a deformation map $ T $.
\begin{prop}\label{prop:CE-cohomology-deformation-map}
The Chevalley-Eilenberg cohomology of the Lie pseudoalgebra $ (\mathfrak{h}, \mu^T) $ with coefficients in the representation $ (\mathfrak{g}, \zeta) $ governs the infinitesimal deformations of the deformation map $ T $.
Thus, we define this cohomology to be the cohomology associated with the deformation map $ T $ of the matched pair $ (\mathfrak{g}, \mathfrak{h}) $ of Lie pseudoalgebras.
\end{prop}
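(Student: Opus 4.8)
The plan is to leverage the controlling $\mathfrak{L}_\infty$-pseudoalgebra already constructed in Theorem~\ref{thm:4.11-pseudo} together with the twisting procedure of Theorem~\ref{thm:4.17} and its specialization Corollary~\ref{cor:4.18}, and then to identify the resulting twisted differential with the Chevalley--Eilenberg coboundary operator $d^T_{CE}$ built from the data $(\mathfrak{h}, \mu^T)$ and $(\mathfrak{g}, \zeta)$ of Definition~\ref{def:4.23}. First I would recall that, by Corollary~\ref{cor:4.16-pseudo}, for the quasi-twilled pseudoalgebra $(\mathfrak{g}\bowtie\mathfrak{h}, \mathfrak{g}, \mathfrak{h})$ attached to the matched pair, the graded space $\bigoplus_{n\ge 1} C^n(\mathfrak{h},\mathfrak{g})$ carries a differential graded Lie algebra structure whose Maurer--Cartan elements are exactly the deformation maps $T$. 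Fixing such a $T$, Corollary~\ref{cor:4.18} produces the twisted dgLa $\big(\bigoplus_{n\ge1} C^n(\mathfrak{h},\mathfrak{g}), d^T, [[\cdot,\cdot]]\big)$, and Theorem~\ref{thm:4.17} shows that $T'$ is a $\mathcal{D}$-map deformation of $T$ if and only if $T'$ is a Maurer--Cartan element of this twisted structure, i.e.\ the twisted dgLa governs infinitesimal deformations of $T$.

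The key remaining step is the comparison $l^T_1 = \pm\, d^T_{CE}$, analogous to Proposition~\ref{prop:4.21-pseudo}. I would verify that the explicit formula for $d^T$ in Corollary~\ref{cor:4.18} coincides, up to the sign $(-1)^{p-1}$ on $C^p(\mathfrak{h},\mathfrak{g})$, with the Chevalley--Eilenberg coboundary $d^T_{CE}$ of the Lie pseudoalgebra $(\mathfrak{h},\mu^T)$ acting on $(\mathfrak{g},\zeta)$. Concretely: the first two sums in $d^T$ reproduce the $\eta$- and $\mu$-contributions to $d^T_{CE}$, while the three correction sums involving $T(\rho(\cdots))$, $f(\rho(T(\cdots)))$ and $[T(\cdot)\,*\,\cdot]_{\mathfrak{g}}$ exactly assemble, after using the defining identity of $\mu^T$ and $\zeta$ from Definition~\ref{def:4.23}, into the remaining terms of the representation action $\zeta$ and the twisted bracket $\mu^T$. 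Since $T$ is a $\mathcal{D}$-map, Lemma~\ref{lem:4.20-pseudo} (or the matched-pair version implicit in Theorem~\ref{thm:4.10-pseudo}) guarantees that $(\mathfrak{h},\mu^T)$ is a genuine Lie pseudoalgebra and $\zeta$ a genuine representation, so $d^T_{CE}$ is a bona fide differential; combined with the identification $l^T_1=\pm d^T_{CE}$ and the fact that the quadratic part $l^T_2$ controls obstructions, the Chevalley--Eilenberg cohomology of $(\mathfrak{h},\mu^T)$ with coefficients in $(\mathfrak{g},\zeta)$ governs the infinitesimal deformations of $T$.

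I expect the main obstacle to be purely bookkeeping: tracking the Koszul and permutation signs $(\sigma_{i,j}\otimes_H\mathrm{id})$ through the $H^{\otimes n}$-linear pseudotensor formalism so that the correction terms in $d^T$ and the $\zeta$-, $\mu^T$-expansions of $d^T_{CE}$ match term by term, especially reconciling the transposition factors $((12)\otimes_H\mathrm{id})$ appearing in $\zeta(v\otimes x)$ of Definition~\ref{def:4.23} with the cyclic permutations $\sigma_{1,i}$ in the cochain formula. Once this sign/permutation matching is settled, the proposition follows formally: deformation maps deforming $T$ are Maurer--Cartan elements of the twisted dgLa (Theorem~\ref{thm:4.17}, Corollary~\ref{cor:4.18}), the twisted differential is (up to sign) $d^T_{CE}$, hence the first cohomology $H^1(\mathfrak{h},\mathfrak{g})$ with respect to $(\mu^T,\zeta)$ classifies infinitesimal deformations modulo trivial ones, and $H^2$ houses the obstructions, which is precisely the assertion that this Chevalley--Eilenberg cohomology is the cohomology of the deformation map $T$.
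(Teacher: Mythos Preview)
Your proposal is correct and aligns with the paper's approach: the paper does not supply a separate proof for this proposition, treating it as an immediate consequence of the preceding machinery (Lemma~\ref{lem:4.20-pseudo}, Definition~\ref{def:4.20-pseudo}, Proposition~\ref{prop:4.21-pseudo}, Theorem~\ref{thm:4.17}, and Corollary~\ref{cor:4.18}), which is exactly the chain of results you invoke. Your outline makes explicit the logical dependencies the paper leaves implicit, and the bookkeeping obstacle you anticipate with the permutation signs is real but routine, especially since in the matched-pair case $\theta = 0$ and $l_3 = 0$, so the twisted $\mathfrak{L}_\infty$-structure reduces to the honest dgLa of Corollary~\ref{cor:4.18}.
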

\begin{rem}\label{rem:deformations-of-operators}
The cohomology theory developed above can be applied to classify infinitesimal deformations of various types of operators on Lie pseudoalgebras, including relative Rota-Baxter operators, twisted Rota-Baxter operators, and Reynolds-type operators. This parallels the classical situation in Lie algebra theory, where such deformations are governed by the corresponding cohomology groups. These results suggest that our framework provides a robust foundation for studying deformation theory of operators defined on the Lie pseudoalgebras.
\end{rem}

\end{document}